\begin{document}

\bibliographystyle{alpha}
\newtheorem{proposition}[subsubsection]{Proposition}
\newtheorem{lemma}[subsubsection]{Lemma}
\newtheorem{corollary}[subsubsection]{Corollary}
\newtheorem{thm}[subsubsection]{Theorem}
\newtheorem{introthm}{Theorem}
\newtheorem*{thm*}{Theorem}
\newtheorem{conjecture}[subsubsection]{Conjecture}
\newtheorem{question}[subsubsection]{Question}
\newtheorem{fails}[subsubsection]{Fails}

\theoremstyle{definition}
\newtheorem{definition}[subsubsection]{Definition}
\newtheorem{notation}[subsubsection]{Notation}
\newtheorem{condition}[subsubsection]{Condition}
\newtheorem{example}[subsubsection]{Example}
\newtheorem{claim}[subsubsection]{Claim}

\theoremstyle{remark}
\newtheorem{remark}[subsubsection]{Remark}

\numberwithin{equation}{subsection}

%Matheumgebungen
\newcommand{\eq}[2]{\begin{equation}\label{#1}#2 \end{equation}}
\newcommand{\ml}[2]{\begin{multline}\label{#1}#2 \end{multline}}
\newcommand{\mlnl}[1]{\begin{multline*}#1 \end{multline*}}
\newcommand{\ga}[2]{\begin{gather}\label{#1}#2 \end{gather}}
\newcommand{\mat}[1]{\left(\begin{smallmatrix}#1\end{smallmatrix}\right)}

%xypic
\newcommand{\arir}{\ar@{^{(}->}}
\newcommand{\aril}{\ar@{_{(}->}}
\newcommand{\are}{\ar@{>>}}

% Pfeile xr , xl 
\newcommand{\xr}[1] {\xrightarrow{#1}}
\newcommand{\xl}[1] {\xleftarrow{#1}}
\newcommand{\lra}{\longrightarrow}
\newcommand{\inj}{\hookrightarrow}

% mathfrac, mathcal 
\newcommand{\mf}[1]{\mathfrak{#1}}
\newcommand{\mc}[1]{\mathcal{#1}}

% rm - Abkuerzungen 
\newcommand{\CH}{{\rm CH}}
\newcommand{\Gr}{{\rm Gr}}
\newcommand{\codim}{{\rm codim}}
\newcommand{\cd}{{\rm cd}}
\newcommand{\Spec} {{\rm Spec}}
\newcommand{\supp} {{\rm supp}}
\newcommand{\Hom} {{\rm Hom}}
\newcommand{\End} {{\rm End}}
\newcommand{\id}{{\rm id}}
\newcommand{\Aut}{{\rm Aut}}
\newcommand{\sHom}{{\rm \mathcal{H}om}}
\newcommand{\Tr}{{\rm Tr}}
\newcommand{\iHom}{\underline{{\rm Hom}}}
\newcommand{\coker}{{\rm coker}}
% Abkuerzungen zu Bourbaki-Notation

% Zahlen
\renewcommand{\P} {\mathbb{P}}
\newcommand{\Z} {\mathbb{Z}}
\newcommand{\Q} {\mathbb{Q}}
\newcommand{\C} {\mathbb{C}}
\newcommand{\F} {\mathbb{F}}
\newcommand{\W}{\mathbb{W}}

%sonst
\newcommand{\OO}{\mathcal{O}}

\title[De Rham-Witt cohomology]{Big de Rham-Witt cohomology: basic results}

\author{Andre Chatzistamatiou}
\address{Fachbereich Mathematik \\ Universit\"at Duisburg-Essen \\ 45117 Essen, Germany}
\email{a.chatzistamatiou@uni-due.de}

\thanks{This work has been supported by the SFB/TR 45 ``Periods, moduli spaces and arithmetic of algebraic varieties''}

\begin{abstract}
Let $X$ be a smooth projective $R$-scheme, where $R$ is a smooth $\Z$-algebra. As constructed by Hesselholt, we have the absolute big de Rham-Witt complex $\W\Omega^*_X$ of $X$ at our disposal. There is also a relative version $\W\Omega^*_{X/R}$ with $\W(R)$-linear differential.
In this paper we study the hypercohomology of the relative (big) de Rham-Witt complex after truncation with finite truncation sets $S$. 
We show that it is a projective 
$\W_S(R)$-module, provided that the de Rham cohomology is a flat $R$-module. In addition, we establish a Poincar\'e duality theorem.
% Our results rely on an explicit description of the relative de Rham-Witt complex of a smooth $\lambda$-ring, which may be of independent interest.   
\end{abstract}

\maketitle

%\tableofcontents

\section*{Introduction}
Let $X$ be a scheme over a perfect field $k$ of characteristic $p>0$. 
The de Rham-Witt complex $W\Omega^*_{X/k}$ was defined by Illusie \cite{Illusie}
relying on ideas of Bloch, Deligne and Lubkin. It is a projective system of 
complexes of $W(k)$-modules on $X$, which is indexed by the positive integers. 
If $X$ is smooth then the hypercohomology of $W_n\Omega^*_{X/k}$ admits a natural 
comparison isomorphism to the crystalline cohomology of $X$ with respect to $W_n(k)$. 

Langer and Zink have extended Illusie's definition of the de Rham-Witt complex 
to a relative situation, where $X$ is a scheme over $\Spec(R)$ and $R$ is a $\Z_{(p)}$-algebra \cite{LZ}. If $p$ is nilpotent in $R$ and $X$ is smooth, then they construct a functorial 
comparison isomorphism 
$$
H^*(X,W_n\Omega^*_{X/R})\cong H^*_{crys}(X/W_n(R)).
$$

The big de Rham-Witt complex $\W\Omega^*_{A}$ was introduced, for any commutative ring $A$, by Hesselholt and 
Madsen \cite{HM}. The original construction relied on the adjoint functor theorem and 
has been replaced by a direct and explicit method due to Hesselholt \cite{H}. 

Again, it is a projective system of graded sheaves $[S\mapsto \W_S\Omega^*_A]$, but the index set consists 
of finite truncation sets; that is, finite subsets $S$ of $\mathbb{N}_{>0}$ having the 
property that whenever $n\in S$, all (positive) divisors of $n$ are also contained in $S$.
For the  ring of integers,  $\W\Omega^*_{\Z}$  has been computed by Hesselholt \cite{H}. It vanishes in degree $\geq 2$, but $\W\Omega^1_{\Z}$ is non-zero. 

Let $X$ be an $R$-scheme. In this paper we will consider the relative version 
$$
S\mapsto \W_S\Omega^*_{X/R}
$$ 
of the (big) de Rham-Witt complex, which is constructed from   
$\W\Omega^*_{X}$ by killing the ideal generated by $\W\Omega^1_{R}$. 
The relation with the de Rham-Witt complex of Langer-Zink is given in 
Proposition~\ref{proposition-comparison-Langer-Zink}: if $R$ is a $\Z_{(p)}$-algebra then
$$ 
%\W_{\{p^i\mid 0\leq i\leq n-1\}}\Omega^*_{X/\Z}\otimes_{\Z} \Z_{(p)} = W_n\Omega^*_{X\otimes_{\Z}\Z_{(p)}/\Z_{(p)}}.
\W_{\{1,p,\dots, p^{n-1}\}}\Omega^*_{A/R} = W_n\Omega^*_{A/R}.
$$  
In the following we will use the notation $W_n=\W_{\{1,p,\dots, p^{n-1}\}}$, assuming that a prime $p$ has been fixed.

It is natural to consider $\W_S\Omega^*_{X/R}$ as a sheaf of complexes on the scheme $\W_S(X)$, which is obtained by gluing $\Spec(\W_S(A_i))$
for an affine covering $X=\bigcup_i \Spec(A_i)$. Then the components $\W_S\Omega^q_{X/R}$ form quasi-coherent sheaves, 
and are coherent under suitable finiteness conditions.   

Our purpose is to show that the de Rham-Witt cohomology 
$$
H^i_{dRW}(X/\W_S(R)) \overset{{\rm def}}{=} H^i(\W_S(X),\W_S\Omega^*_{X/R}) 
$$
is as well-behaved as the usual de Rham cohomology. The main theorem of the paper
is the following.

\begin{introthm}[cf.~Theorem~\ref{thm-projective-blue}]\label{introthm-projective-blue}
   Let $R$ be a smooth $\Z$-algebra. Let $X$ be a smooth and proper $R$-scheme. Suppose that the de Rham cohomology $H^*_{dR}(X/R)$ of $X$
is a flat $R$-module. Then $H^*_{dRW}(X/\W_S(R))$ is a finitely generated 
projective $\W_S(R)$-module for all finite truncation sets $S$. Moreover, for an inclusion of finite truncation sets $T\subset S$, the induced map
\begin{equation}
H^*_{dRW}(X/\W_S(R))\otimes_{\W_S(R)}\W_T(R)\xr{\cong} H^*_{dRW}(X/\W_T(R))  
\end{equation}
is an isomorphism.
\end{introthm}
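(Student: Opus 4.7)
The plan is to proceed by induction on the cardinality $|S|$ of the truncation set. The base case $|S|=1$ forces $S=\{1\}$, so $\W_{\{1\}}(R)=R$ and $\W_{\{1\}}\Omega^*_{X/R}=\Omega^*_{X/R}$; the statement becomes the assertion that $H^*_{dR}(X/R)$ is a finitely generated projective $R$-module, which follows from the flatness hypothesis together with properness of $X$ (giving finite generation over the Noetherian ring $R$).

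For the inductive step with $|S|\geq 2$, I would pick a maximal element $n\in S$ and set $T:=S\setminus\{n\}$ (still a truncation set since $n$ is maximal) and $S/n:=\{m\in\mathbb{N}_{>0} : mn\in S\}$ (another truncation set, with $|S/n|<|S|$ because $n\geq 2$). The natural restriction yields a short exact sequence of complexes of sheaves on $\W_S(X)$
\begin{equation*}
0 \to \mc{K}^* \to \W_S\Omega^*_{X/R} \to \W_T\Omega^*_{X/R} \to 0,
\end{equation*}
and, using the structural relations in the big de Rham-Witt complex, $\mc{K}^*$ should fit into a two-step filtration whose graded pieces are built from $\W_{S/n}\Omega^*_{X/R}$ via the generalized Verschiebung $V_n$ and the composition $dV_n$. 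Passing to hypercohomology, I would combine the associated long exact sequence with the induction hypothesis applied to both $T$ and $S/n$: projectivity of $H^*_{dRW}(X/\W_T(R))$ and $H^*_{dRW}(X/\W_{S/n}(R))$, together with the corresponding Tor-vanishing, should collapse the long exact sequence into short exact sequences of projective $\W_S(R)$-modules, yielding projectivity of $H^*_{dRW}(X/\W_S(R))$.

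The base change isomorphism then follows from projectivity (hence flatness) of the cohomology combined with compatibility of $\W_\bullet\Omega^*_{X/R}$ with base change of the Witt ring along $\W_S(R)\to\W_T(R)$; by iteration it suffices to treat the one-step case $T=S\setminus\{n\}$, which comes directly out of the long exact sequence above. The principal obstacle, I expect, will be the precise identification of $\mc{K}^*$: one must use the axiomatic relations between $V_n$, $F_n$, and $d$ to exhibit $\mc{K}^*$ as an extension of two specific complexes built from $\W_{S/n}\Omega^*_{X/R}$, with differentials and structure maps compatible enough that the induction hypothesis for $S/n$ genuinely computes its hypercohomology. Pinning down this filtration, and verifying that the relevant maps are isomorphisms rather than merely surjections, is the delicate technical core of the argument.
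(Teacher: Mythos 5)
Your proposed induction on $|S|$ via the short exact sequence $0\to\mc{K}^*\to\W_S\Omega^*_{X/R}\to\W_T\Omega^*_{X/R}\to 0$ is genuinely different from the paper's approach, which makes no use of the $V$-filtration at all. Unfortunately the plan has a structural gap at exactly the point you identify as ``the delicate technical core,'' and I don't think it can be repaired without importing the paper's main tool.

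The fundamental problem is the $\W_S(R)$-module structure on $\mc{K}^*$. Since $n$ is maximal, $S/n=\{1\}$ and $\mc{K}^q$ is generated by $V_n(\Omega^q_{X/R})$ and $dV_n(\Omega^{q-1}_{X/R})$. On such elements $\W_S(R)$ acts through the Frobenius $F_n:\W_S(R)\to\W_{\{1\}}(R)=R$, because $a\cdot V_n(\omega)=V_n(F_n(a)\omega)$. So $H^*(\W_S(X),\mc{K}^*)$ is an $R$-module viewed as a $\W_S(R)$-module via $F_n$, and $F_n$ is not flat. Hence even if the induction hypothesis identifies that $R$-module as projective over $R$, it is essentially never flat (let alone projective) over $\W_S(R)$. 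The long exact sequence therefore cannot simply ``collapse into short exact sequences of projective $\W_S(R)$-modules'': you have a flat quotient over $\W_T(R)$, a non-flat $\W_S(R)$-module $H^*(\mc{K}^*)$, and no reason for connecting maps to vanish. (Compare the classical $p$-typical setting over a perfect field, where torsion-freeness of de Rham cohomology does imply flatness of $H^*(W_n\Omega^*)$ over $W_n(k)$, but the proof is via the crystalline comparison, not by the $V$-filtration — precisely because the graded pieces of the filtration involve Cartier operators and the de Rham cohomology of the special fiber rather than anything projective over the truncated Witt ring.)

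A secondary issue: your flatness hypothesis on $H^*_{dR}(X/R)$ is only used in the base case $S=\{1\}$, but this hypothesis really needs to control the de Rham cohomology of all thickenings $X\otimes_R R/\mf{m}^j$. That is exactly what the paper's proof exploits. The paper reduces to $p$-typical $S$ and localizes at a maximal ideal $\mf{m}$, then proves a base change quasi-isomorphism
\[
R\Gamma(W_n\Omega^*_{X/R})\otimes^{\mathbb{L}}_{W_n(R)}W_n(R/\mf{m}^j)\xr{\;\sim\;}R\Gamma\bigl(W_n\Omega^*_{(X\otimes R/\mf{m}^j)/(R/\mf{m}^j)}\bigr)
\]
(Theorem~\ref{thm-comparison-isom} and Corollary~\ref{corollary-comparison-thm-RGamma}), identifies the right-hand side with crystalline cohomology by Langer--Zink and then with $H^*_{dR}(X/R)\otimes_{R,\rho}W_n(R/\mf{m}^j)$ by Berthelot--Ogus, and passes to the $\mf{m}$-adic limit. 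Flatness then follows from faithful flatness of $W_n(R_{\mf{m}})\to W_n(\hat R)$ (Lemma~\ref{lemma-faithfully-flat-blue}). This local-analytic route is the actual content; the $V$-filtration induction cannot substitute for it.
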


%The flatness of $H^*(X,\W_S\Omega^*_{X/\Z})$ is not evident. For example, 
%the Witt vector cohomology $H^*(X,\W_S\OO_X)$, for $\dim X>1$, is not a flat
%$\W_S(R)$-module in general. 

In order to prove Theorem \ref{introthm-projective-blue}, we will construct
for all maximal ideals $\mf{m}$ of $R$ %and all positive integers $n,j$, 
and $n,j>0$, a natural quasi-isomorphism:
$$
R\Gamma(W_n\Omega^*_{X/R})\otimes^{\mathbb{L}}_{W_n(R)}W_n(R/\mf{m}^j)\xr{{\rm q-iso}} R\Gamma(W_n\Omega^*_{X\otimes R/\mf{m}^j/(R/\mf{m}^j)}),
$$
where $p={\rm char}(R/\mf{m})$. The right hand side is $R\Gamma$ of 
the de Rham-Witt complex defined by Langer and Zink. Thus it 
computes the crystalline cohomology, which in our case is a free 
$W_n(R/\mf{m}^j)$-module. Taking the limit $\varprojlim_j$, this will yield the 
flatness of 
$$
H^*_{dRW}(X/W_n(R))\otimes_{W_n(R)} W_n(\varprojlim_j R/\mf{m}^j)
$$
as $W_n(\varprojlim_j R/\mf{m}^j)$-module for all maximal ideals $\mf{m}$, which is sufficient 
in order to conclude the flatness of the de Rham-Witt cohomology.  

Concerning Poincar\'e duality we will show the following theorem.
\begin{introthm}[cf.~Corollary~\ref{corollary-Poincare-duality-made-simple}]
  \label{introthm-2} 
  Let $R$ be a smooth $\Z$-algebra. Let $X\xr{} \Spec(R)$ be a smooth projective morphism such that $H^*_{dR}(X/R)$ is a flat $R$-module. 
Suppose that $X$ is connected of relative dimension $d$. 
 If the canonical map 
\begin{equation*}
H^{i}_{dR}(X/R)\xr{} \Hom_R(H^{2d-i}_{dR}(X/R),R)  
\end{equation*}
is an isomorphism, then the same holds for the de Rham-Witt cohomology:
\begin{equation*} 
H^{i}_{dRW}(X/\W_S(R))\xr{\cong} \Hom_{\W_S(R)}(H^{2d-i}_{dRW}(X/\W_S(R)),\W_S(R)),
\end{equation*}
for all finite truncation sets $S$.
\end{introthm}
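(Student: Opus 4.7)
The plan is to reduce the isomorphism claim to the case of the trivial truncation set $S=\{1\}$, where it coincides with the hypothesis on de Rham cohomology, using base change compatibility (Theorem~\ref{introthm-projective-blue}) together with nilpotence of the kernel of $\W_S(R)\to R$.

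To get started, I would invoke the Poincar\'e duality theorem of the paper referenced in the corollary (presumably established just before), which should produce a natural map
\begin{equation*}
D_S\colon H^i_{dRW}(X/\W_S(R))\lra \Hom_{\W_S(R)}\bigl(H^{2d-i}_{dRW}(X/\W_S(R)),\W_S(R)\bigr)
\end{equation*}
coming from a cup product and a trace, which is moreover functorial in the truncation set in the sense that for $T\subset S$ the map $D_S\otimes_{\W_S(R)}\W_T(R)$ agrees with $D_T$ under the base change isomorphism of Theorem~\ref{introthm-projective-blue}. Under the flatness hypothesis on $H^*_{dR}(X/R)$, Theorem~\ref{introthm-projective-blue} further tells us that both source and target of $D_S$ are finitely generated projective $\W_S(R)$-modules; for the target one additionally uses that formation of the dual of a finitely generated projective module commutes with arbitrary base change.

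Setting $T=\{1\}$ in the above compatibility, one has $\W_{\{1\}}(R)=R$ and $\W_{\{1\}}\Omega^*_{X/R}=\Omega^*_{X/R}$, so $D_{\{1\}}$ recovers the classical de Rham duality map, which is an isomorphism by hypothesis. To propagate this back to arbitrary finite $S$, I would use that the kernel $I_S$ of the restriction $\W_S(R)\to\W_{\{1\}}(R)=R$ is a nilpotent ideal. Indeed, $I_S$ is generated by the images of the Verschiebungs $V_n$ for $n\in S\setminus\{1\}$, and the product of two such Verschiebungs is again a Verschiebung whose index is the least common multiple of the original indices; iterated products then force the index past $\max(S)$ and vanish in $\W_S(R)$. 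A standard Nakayama-type argument now shows that a map between finitely generated projective $\W_S(R)$-modules that becomes an isomorphism modulo a nilpotent ideal is itself an isomorphism: surjectivity follows by applying Nakayama to the cokernel, after which the source splits off the kernel as a finitely generated projective summand, to which Nakayama applies again to force it to vanish.

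The main obstacle in this plan lies outside the statement of the corollary itself: it is the construction of the duality map $D_S$ and the verification of its base change compatibility in the big de Rham-Witt framework, which constitute the content of the preceding Poincar\'e duality theorem. Granted those inputs, the corollary as stated follows formally from Theorem~\ref{introthm-projective-blue} combined with the nilpotence argument above.
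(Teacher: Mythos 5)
Your overall strategy---construct a duality map $D_S$ in the de Rham--Witt framework, compatible with base change in $S$, and then deduce the isomorphism for all $S$ from the case $S=\{1\}$---matches the structure of the paper's argument. The paper indeed builds such a pairing via Proposition~\ref{proposition-trace-map} and the cup product \eqref{equation-product-varities-C-R}, producing a morphism in the category $\mathcal{C}_R$. However, the mechanism you propose for propagating the isomorphism from $S=\{1\}$ to general $S$ contains a genuine error.

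You claim that the kernel $I_S$ of $\W_S(R)\to R$ is nilpotent, arguing that products of Verschiebungs raise the index past $\max(S)$. This is false. The correct multiplication rule is $V_m(a)\cdot V_n(b)=c\,V_{mn/c}(F_{n/c}(a)F_{m/c}(b))$ with $c=(m,n)$, so in particular $V_n(a)^k=n^{k-1}V_n(a^k)$: the index does \emph{not} increase under iterated products, and for $a$ a unit these powers never vanish. Concretely, for $R=\Z$ the ghost map embeds $\W_S(\Z)$ into $\prod_{s\in S}\Z$, so $\W_S(\Z)$ is a reduced ring with no nonzero nilpotents, yet $I_S\neq 0$ whenever $|S|>1$. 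The Nakayama-type argument therefore does not apply: knowing a map of finitely generated projective $\W_S(R)$-modules is an isomorphism after $\otimes_{\W_S(R)}R$ says nothing in general, because $I_S$ can easily meet the Jacobson radical in only the zero ideal.

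What actually makes the reduction to $S=\{1\}$ work in the paper is \emph{not} a ring-theoretic smallness of $I_S$, but the extra structure of objects in $\mathcal{C}_R$: the Frobenius operators $\phi_n$ together with the transfers $\beta_n$ satisfying $\phi_n\circ\beta_n=n^a$ and $\beta_n(\lambda\cdot\phi_n(x))=n^{a-1}V_n(\lambda)\cdot x$. Proposition~\ref{proposition-T-conservative} shows that the tangent functor $T\colon \mathcal{C}_R\to(\text{$R$-modules})$, $M\mapsto M_{\{1\}}$, is conservative by an induction on $|S|$: writing $n=\max(S)$, the kernel of $\W_S(R)\to\W_{S\setminus\{n\}}(R)$ is $I=\{V_n(\lambda)\}$, and one uses the $\phi_n$--$\beta_n$ relations to control $I\cdot M_S$ directly. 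Without those operators the reduction fails, which is precisely why the paper takes the trouble to construct the duality map as a morphism \emph{in $\mathcal{C}_R$} rather than merely as a compatible family of $\W_S(R)$-linear maps. To repair your proof you would need to replace the nilpotence/Nakayama step with an appeal to this conservativity result, which in turn requires checking that $D_S$ really does commute with $\phi_n$ (equivalently, is a morphism in $\mathcal{C}_R$), not just with the restriction maps $\W_S(R)\to\W_T(R)$.
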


%In this introduction we state the Poincar\'e duality in a way that only refers to the
%$\W_S(R)$-module structure. 
In fact, de Rham-Witt cohomology is equipped with a richer structure than 
the $\W(R)$-module structure, coming from the Frobenius operators 
$$
\phi_n:H^*_{dRW}(X/\W_S(R)) \xr{}  H^*_{dRW}(X/\W_{S/n}(R)), 
$$
for all positive integers $n$, and where $S/n:=\{s\in S\mid ns\in S\}$. These
are Frobenius linear maps %, i.e.~$\phi_n(\lambda x)=F_n(\lambda)\phi_n(x)$, 
 satisfying $\phi_n\circ \phi_m=\phi_{nm}$. 

The relationship with the Frobenius
action on the crystalline cohomology of the fibers is as follows. 
Let $\mf{m}$ be a maximal ideal of $R$, set $k=R/\mf{m}$ and $p={\rm char}(k)$. If $H^*_{dR}(X/R)$ is torsion-free  then
there is a  natural isomorphism
$$
H^i_{dRW}(X/W_n(R))\otimes_{W_n(R)} W_n(k) \cong H^i_{crys}(X\otimes_R k/W_n(k)),
$$
and $\phi_p\otimes F_p$ corresponds via this isomorphism  to the composition of $H^i_{crys}({\rm Frob})$ with the projection.

%Omitting the details in this introduction, 
As will be made precise in Section \ref{section-values}, the projective system 
$$
H^i_{dRW}(X/\W(R))\overset{{\rm def}}{=} [S\mapsto H^i_{dRW}(X/\W_S(R))],
$$ 
together with the Frobenius morphisms $\{\phi_n\}_{n\in \mathbb{N}_{>0}}$, defines an object 
in a rigid $\otimes$-category $\mathcal{C}_R$.
%(see Definition~\ref{definition-phi-N} and Proposition~\ref{proposition-big-de-Rham-Witt-phi-module}). 
Maybe the most important property of $\mathcal{C}_R$ is 
the existence of a conservative, faithful $\otimes$-functor 
$$
T:\mathcal{C}_R \xr{} \text{($R$-modules)}, \qquad T(H^i_{dRW}(X/\W(R)))=H^i_{dR}(X/R).
$$
Moreover, $\mathcal{C}_R$ has Tate objects $\mathbf{1}(m)$, $m\in \Z$, and the 
first step towards Poincar\'e duality will be to prove the existence of a natural
morphism in $\mathcal{C}_R$:
$$
H^{2d}_{dRW}(X/\W(R))\xr{}\mathbf{1}(-d) \qquad (d=\text{relative dimension of $X/R$}).
$$  
Then it will  follow easily that 
$$
H^{i}_{dRW}(X/\W(R))\xr{\cong} \iHom(H^{2d-i}_{dRW}(X/\W(R)),\mathbf{1}(-d)), 
$$ 
provided that the assumptions of Theorem \ref{introthm-2} are satisfied. Taking
the underlying $\W(R)$-modules one obtains Theorem \ref{introthm-2}.

\subsection*{Acknowledgements}
After this manuscript had appeared on arXiv, we received a letter from professor James Borger who informed us that he had already obtained 
Theorem \ref{introthm-projective-blue}, for $R=\Z[N^{-1}]$, in a joint work with Mark Kisin by using similar methods.

I thank Andreas Langer and Kay R\"ulling for several useful comments on the first version of the paper.
\tableofcontents

\section{Relative de Rham-Witt complexes}

\subsection{Witt vectors}

For the definition and the basic properties of the ring of Witt vectors we refer to \cite[\textsection1]{H}. We briefly recall the notions in this section. 

A  subset $S\subset \mathbb{N}=\{1,2,\dots\}$ is called a \emph{truncation set} if $n\in S$ implies 
that all positive divisors of $n$ are contained in $S$. For a truncation set $S$ and $n\in S$, we define
$
S/n:=\{s\in S\mid sn\in S\}.
$

Let $A$ be a commutative ring. For all truncation sets $S$ we have the ring of Witt vectors 
$
\W_S(A)
$
at our disposal. The ghost map is the functorial ring homomorphism 
%\begin{align*}
$$  gh=(gh_n)_{n\in S}:\W_S(A)\xr{} \prod_{n\in S}A,\quad
  gh_n((a_s)_{s\in S}):=\sum_{d\mid n} d\cdot a_d^{n/d}.$$
%\end{align*}
It is injective provided that $A$ is $\Z$-torsion-free.

For all positive integers $n$, there is a functorial morphism of rings 
$$
F_n:\W_S(A)\xr{}\W_{S/n}(A),
$$
called the \emph{Frobenius}. Moreover there is a functorial morphism of $\W_S(A)$-modules, the \emph{Verschiebung},  
\begin{align*}
V_n: \W_{S/n}(A)&\xr{} \W_S(A),  
\end{align*}
where the source is a $\W_S(A)$-module via $F_n$. 
For all coprime positive integers $n,m \in \mathbb{N}$ we have 
$$
F_n\circ V_n=n, \quad F_n\circ V_m = V_m \circ F_n  \qquad \quad ((m,n)=1).
$$

We have a multiplicative Teichm\"uller map
$$
[-]:A\xr{} \W_S(A), \quad a\mapsto [a]:=(a,0,0,\dots)\in \W_S(A),
$$
and if $S$ is finite then every element $a\in \W_S(A)$ can be written as 
$$
a=\sum_{s\in S}V_s([a_s])
$$ 
with unique elements $(a_s)_{s\in S}$ in $A$.

Let $T\subset A$ be a multiplicative set and suppose that $S$
is a finite truncation set. We can consider $T$ via the Teichm\"uller map
as multiplicative set in $\W_S(A)$. Then the natural ring homomorphism
$$
T^{-1}\W_S(A)\xr{} \W_S(T^{-1}A).
$$  
is an isomorphism. If $T\subset \Z$ is a multiplicative set then 
$$
\W_S(A)\otimes_{\Z}T^{-1}\Z \xr{} \W_S(T^{-1}A)
$$
is an isomorphism. 

Let $S$ be a truncation set, and let $n$ be a positive integer; set $T:=S\backslash \{s\in S; n\mid s\}$. Then $T$  is a truncation set and  
we have a short exact sequence of $\W_S(A)$-modules: 
\begin{equation}\label{equation-short-exact-seq-S/n-S-T}
0\xr{} \W_{S/n}(A)\xr{V_n} \W_S(A)\xr{R^S_{T}} \W_T(A)\xr{} 0.
\end{equation}

\begin{example}
  We have 
$
\W_S(\Z)=\prod_{n\in S} \Z\cdot V_n(1),
$
and the product is given by $V_m(1)\cdot V_n(1)=c\cdot V_{mn/c}(1)$, where $c=(m,n)$ is the greatest common divisor \cite[Proposition~1.6]{H}.
\end{example}
 
%% \subsubsection{$\lambda$-rings}\label{section-lambda-rings}
%% We denote by $\W(A)$ the ring $\W_{\mathbb{N}}(A)$. There is a unique ring homomorphism
%% $$
%% \Delta:\W(A)\xr{}\W(\W(A)),
%% $$
%% such that $gh_n\circ \Delta=F_n$ for all positive integers $n$ \cite[Proposition~1.18]{H}.

%% \begin{definition}\cite[Definition~1.20]{H}\label{definition-lambda}
%%   A $\lambda$-ring is a pair $(A,\lambda_A)$ of a ring and a ring homomorphism 
%% $\lambda_A:A\xr{} \W(A)$ such that the following diagrams commute:
%% $$
%% \xymatrix
%% {
%% A \ar[r]^-{\lambda_A}\ar[rd]_{id}
%% &
%% \W(A) \ar[d]^{R_{\{1\}}}
%% &
%% A\ar[rr]^{\lambda_A}\ar[d]_{\lambda_A}
%% &
%% &
%% \W(A) \ar[d]^{\Delta}
%% \\
%% &
%% A
%% &
%% \W(A)\ar[rr]^{\W(\lambda_A)}
%% &
%% &
%% \W(\W(A)).
%% }
%% $$
%% \end{definition}

%% We denote the ring endomorphism $gh_n\circ \lambda_A$ of $A$ by $\psi_n$, and call it
%% the $n$th associated Adams operation. It is well-known that the Adams operations
%% satisfy the following properties
%%   \begin{itemize}
%%   \item [(i)] For all positive integers $m,n$: $\psi_m\circ \psi_n=\psi_{mn}$. Moreover, $\psi_1=id_A$.
%%   \item [(ii)] For all primes $p$ and all $a\in A$: $\psi_p(a)=a^p\mod pA$, i.e.~$\psi_p$ is a lifting of the absolute Frobenius.
%%   \end{itemize}
%% Examples for $\lambda$-rings include $\Z$ and $\Z[x_1,\dots,x_d]$. For $\Z$
%% there is a unique $\lambda$-structure. For $A=\Z[x_1,\dots,x_d]$ the ring 
%% homomorphism given by $\lambda_A(x_i)=[x_i]$ makes $A$ into a $\lambda$-ring. 

\subsubsection{}
The following theorem will be very useful throughout the paper. 

\begin{thm}\label{thm-van-der-Kallen-Borger}(Borger-van der Kallen)
Let $S$ be a finite truncation set, and let $n$ be a positive integer. 
Let $\rho:A\xr{} B$ be an \'etale ring homomorphism.
The following hold.
\begin{enumerate}
\item The induced ring homomorphism $\W_S(A)\xr{}\W_S(B)$ is \'etale.
\item The morphism 
$$
\W_S(B)\otimes_{\W_S(A),F_n}\W_{S/n}(A)\xr{} \W_{S/n}(B), \quad b\otimes a\mapsto F_n(b)\cdot \W_{S/n}(\rho)(a),
$$
is an isomorphism.
\end{enumerate}
\end{thm}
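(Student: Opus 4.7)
My plan is to prove both statements simultaneously by induction on the cardinality $|S|$ of the truncation set. The base case $|S|=1$ forces $S=\{1\}$, so $\W_S=\id$: part (1) is exactly the hypothesis, and part (2) is trivial since either $n=1$ (both sides are $\W_S(B)$) or $n>1$ (both sides vanish, as $S/n=\emptyset$).

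For the inductive step, fix a maximal element $n_0\in S$ with $n_0>1$ and set $T=S\setminus\{n_0\}$, which is a truncation set with $|T|=|S|-1$; one also checks $|S/n_0|<|S|$ since $n_0\notin S/n_0$. The short exact sequence from the excerpt assembles into a commutative diagram with exact rows
$$
\xymatrix{
0 \ar[r] & \W_{S/n_0}(A) \ar[r]^-{V_{n_0}} \ar[d] & \W_S(A) \ar[r]^{R^S_T} \ar[d] & \W_T(A) \ar[r] \ar[d] & 0 \\
0 \ar[r] & \W_{S/n_0}(B) \ar[r]^-{V_{n_0}} & \W_S(B) \ar[r]^{R^S_T} & \W_T(B) \ar[r] & 0,
}
$$
in which by the inductive hypothesis both outer vertical maps are étale and the conclusion of (2) is known at truncations $T$ and $S/n_0$. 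For part (1), I would deduce flatness of $\W_S(A)\to\W_S(B)$ from the local criterion of flatness applied to the nilpotent ideal $V_{n_0}\W_{S/n_0}(A)\subset\W_S(A)$, combined with the inductive flatness of the outer columns. To promote flat to étale, I would check $\Omega^1_{\W_S(B)/\W_S(A)}=0$ directly: a Witt vector can be written $\sum_{s\in S}V_s([b_s])$, and the relations for $dV_s$ and $d[-]$ in the big de Rham-Witt complex reduce the computation to the vanishing $\Omega^1_{B/A}=0$, which holds by étaleness of $\rho$.

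For part (2), I would reduce to the $\Z$-torsion-free case by presenting $A$ as a quotient of a smooth $\Z$-algebra $\tilde A$ and lifting $\rho$ to an étale map $\tilde\rho:\tilde A\to\tilde B$ via the standard lifting property for étale morphisms; functoriality of $\W_S$ in the base then permits descent back to $A$. In the torsion-free case the ghost map $\W_S(A)\hookrightarrow\prod_{s\in S}A$ is injective, and $F_{n_0}$ becomes the projection $(x_s)_{s\in S}\mapsto(x_{n_0 s})_{s\in S/n_0}$ in ghost coordinates; an explicit calculation then exhibits the map in (2) as a componentwise isomorphism induced by $A\otimes_A B\cong B$.

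The main obstacle is the apparent circularity between (1) and (2) at the current level $|S|$: the flatness argument in (1) seems to need the identification $\W_{S/n_0}(B)\cong\W_S(B)\otimes_{\W_S(A),F_{n_0}}\W_{S/n_0}(A)$, which is precisely (2). I would resolve this by proving (2) first over $\Z$-torsion-free bases (where the ghost-coordinate argument is independent of (1) at level $S$), then using it to complete (1), and finally propagating both statements to arbitrary $A$ by the base-change argument sketched above. A secondary technical subtlety is verifying compatibility with the Frobenius-twisted module structures throughout the induction; this is routine but must be tracked carefully so that the snake lemma argument at the top of the induction really produces the claimed isomorphism rather than a merely formal identification.
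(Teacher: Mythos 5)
The paper does not actually prove this theorem: it is attributed to Borger and van der Kallen, and the text merely points to the external references \cite[Theorem~B]{Borger3}, \cite[Corollary~15.4]{Borger4}, \cite[Theorem~2.4]{vdK} (cf.\ \cite[Theorem~1.22]{H}). So your attempt is necessarily a route the paper does not take, and it needs to be judged on its own.

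There are genuine gaps in the sketch. First, the ideal $I=V_{n_0}\W_{S/n_0}(A)=\ker\bigl(\W_S(A)\to\W_T(A)\bigr)$ is \emph{not} nilpotent: from $V_{n_0}(a)V_{n_0}(b)=n_0\,V_{n_0}(ab)$ one gets $I^k=n_0^{\,k-1}I$, which vanishes only when $n_0$ is nilpotent in $A$. Hence the local criterion of flatness along a nilpotent ideal does not apply (and $\W_S(A)$ is in any case not local). Second, the ghost-coordinate argument for part (2) over a torsion-free base only controls the situation after inverting the integers in $S$: the ghost map is injective but far from surjective integrally, its image being cut out by nontrivial congruences, so a ``componentwise isomorphism'' of $\prod_{s\in S/n}B$ does not descend to the Witt subrings without precisely the integral input that makes this theorem hard. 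Moreover the source $\W_S(B)\otimes_{\W_S(A),F_n}\W_{S/n}(A)$ has no a priori reason to be $\Z$-torsion-free, so injectivity cannot be read off the ghost calculation; ruling out torsion essentially requires flatness of $\W_S(A)\to\W_S(B)$, i.e.\ part (1), reinstating the circularity you set out to avoid. Surjectivity is equally unaddressed: $F_n[b]=[b]^n$, so the image visibly contains $[ab^n]$ for $a\in A$, $b\in B$, but producing $[b]$ itself is nontrivial. Finally, the reduction to torsion-free bases is not a consequence of the standard infinitesimal lifting property for \'etale maps (that applies only to nilpotent thickenings); it requires invoking the local structure theorem for \'etale morphisms (standard \'etale presentation $B\cong(A[x]/f)_g$ with $f'$ a unit) and lifting the data $f,g$, which is in fact the strategy of the cited proofs of van der Kallen and Borger rather than an auxiliary step in an independent induction.
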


The references for this theorem are \cite[Theorem~B]{Borger3} \cite[Corollary~15.4]{Borger4}  and \cite[Theorem~2.4]{vdK} (cf. \cite[Theorem~1.22]{H}).

By using Theorem~\ref{thm-van-der-Kallen-Borger}, the exact sequence (\ref{equation-short-exact-seq-S/n-S-T}), and 
induction on the length of $S$, we easily obtain the following corollary.

\begin{corollary}\label{corollary-van-der-Kallen-Borger}
  Let $\rho:A\xr{} B$ be an \'etale ring homomorphism. Let $S$ be a finite 
truncation set. 
  \begin{itemize}
  \item [(i)]   For an inclusion of truncation sets $T\subset S$, the map
$$
\W_S(B)\otimes_{\W_S(A)}\W_T(A)\xr{}\W_T(B)
$$ 
is an isomorphism.
\item [(ii)] Let $n$ be a positive integer. For any $A$-algebra $C$, the natural ring homomorphism 
$$
\W_{S/n}(C)\otimes_{F_n,\W_S(A)}\W_{S}(B) \xr{} \W_{S/n}(C\otimes_A B), \quad c\otimes b\mapsto c\cdot F_n(b)
$$
is an isomorphism.
  \end{itemize}
\end{corollary}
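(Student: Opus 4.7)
The plan is to argue by induction on $|S|$, reducing both parts to a single maneuver: tensor the exact sequence (\ref{equation-short-exact-seq-S/n-S-T}) with an \'etale base change over $\W_S(A)$, then identify the Frobenius-twisted term by Theorem~\ref{thm-van-der-Kallen-Borger}. The base case $|S|\le 1$ is immediate since $\W_{\{1\}}=\mathrm{id}$. In every inductive step I pick an element $n\in S$ with no proper multiple in $S$, so that $S_0:=S\setminus\{n\}$ is again a truncation set and the sequence (\ref{equation-short-exact-seq-S/n-S-T}) attached to $n$ has quotient $\W_{S_0}$.

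For part (i), I would first note the combinatorial observation that any maximal element of $S\setminus T$ in the divisibility order has no proper multiple in $S$: a proper multiple would lie outside $S\setminus T$, hence in $T$, but $T$ being a truncation set would then pull the original element into $T$. Hence it suffices to handle $T=S_0$, after which chaining
$$\W_S(B)\otimes_{\W_S(A)}\W_T(A)\;\cong\;\W_{S_0}(B)\otimes_{\W_{S_0}(A)}\W_T(A)$$
with the induction hypothesis yields the general case. To treat $T=S_0$ I tensor (\ref{equation-short-exact-seq-S/n-S-T}) with $\W_S(B)$ over $\W_S(A)$; flatness from Theorem~\ref{thm-van-der-Kallen-Borger}(i) preserves exactness, and Theorem~\ref{thm-van-der-Kallen-Borger}(ii) identifies the leftmost term with $\W_{S/n}(B)$. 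Comparison with (\ref{equation-short-exact-seq-S/n-S-T}) applied to $B$, whose left square commutes by the Frobenius reciprocity identity $b\cdot V_n(x)=V_n(F_n(b)\cdot\rho(x))$, then reads off the desired isomorphism on the rightmost term.

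For part (ii) I first reduce to $n=1$ by Theorem~\ref{thm-van-der-Kallen-Borger}(ii):
$$\W_{S/n}(C)\otimes_{F_n,\W_S(A)}\W_S(B)\;\cong\;\W_{S/n}(C)\otimes_{\W_{S/n}(A)}\W_{S/n}(B),$$
so it remains to prove, for $S'=S/n$ and any $A$-algebra $C$, that $\W_{S'}(C)\otimes_{\W_{S'}(A)}\W_{S'}(B)\cong \W_{S'}(C\otimes_A B)$. I induct on $|S'|$: pick $n'$ maximal in $S'$, set $S'_0:=S'\setminus\{n'\}$, and tensor (\ref{equation-short-exact-seq-S/n-S-T}) for $C$ and $n'$ with $\W_{S'}(B)$ over $\W_{S'}(A)$. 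Since the $\W_{S'}(A)$-action on $\W_{S'/n'}(C)$ factors through $F_{n'}:\W_{S'}(A)\to\W_{S'/n'}(A)$, Theorem~\ref{thm-van-der-Kallen-Borger}(ii) rewrites the leftmost term as $\W_{S'/n'}(C)\otimes_{\W_{S'/n'}(A)}\W_{S'/n'}(B)$, which is $C\otimes_A B$ because $S'/n'=\{1\}$. Since the $\W_{S'}(A)$-action on $\W_{S'_0}(C)$ factors through the restriction $\W_{S'}(A)\to\W_{S'_0}(A)$, part (i) together with the induction hypothesis rewrites the rightmost term as $\W_{S'_0}(C\otimes_A B)$. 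Comparing with (\ref{equation-short-exact-seq-S/n-S-T}) for $C\otimes_A B$, the five lemma (once again via Frobenius reciprocity for the left square) concludes.

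The main technical obstacle is the bookkeeping of $\W_S(A)$-module structures on the tensored terms: on $\W_{S/n}(C)$ the action naturally factors through $F_n$, and on $\W_{S_0}(C)$ it factors through the restriction. Recognising these factorisations is exactly what allows Theorem~\ref{thm-van-der-Kallen-Borger}(ii) and part~(i) to absorb the two outer terms; after that, the only remaining verification is the commutativity of the five-lemma squares, which in both parts reduces to the Frobenius reciprocity $V_n(x)\cdot y=V_n(x\cdot F_n(y))$.
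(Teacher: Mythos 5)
Your proposal is correct and is exactly the argument the paper has in mind: the paper states that the corollary follows ``by using Theorem~\ref{thm-van-der-Kallen-Borger}, the exact sequence (\ref{equation-short-exact-seq-S/n-S-T}), and induction on the length of $S$,'' and you have simply filled in the details of that outline. The key observations you make — removing a maximal element $n$ so that $S/n=\{1\}$ and $T=S\setminus\{n\}$, tensoring the short exact sequence with the flat ring $\W_S(B)$, invoking Theorem~\ref{thm-van-der-Kallen-Borger}(ii) to identify the Frobenius-twisted term, and then reading off the cokernel via the five lemma and the reciprocity $V_n(x)\cdot y=V_n(x\cdot F_n(y))$ — are precisely the intended steps, and the reduction of part (ii) to the $n=1$ case followed by its own induction with part (i) feeding in at the quotient term is correct.
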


\begin{notation}\label{notation-Wn}
If a prime $p$ has been fixed then we set  $W_n:=\W_{\{1,p,p^2,\dots,p^{n-1}\}}$.  
\end{notation}
\subsubsection{}\label{section-epsilon-decomposition}
Let $p$ be a prime. Let $R$ be a $\Z_{(p)}$-algebra.  
Since all primes different from $p$ are invertible in $R$, 
the same holds in $\W_S(R)$.  The category of $\W_S(R)$-modules, for a finite truncation set $S$, factors in the following way. Set 
%\begin{align*}
$$
\epsilon_{1,S}:=\prod_{\substack{\text{primes $\ell\neq p$}\\ S/\ell\neq \emptyset}} (1-\frac{1}{\ell}V_{\ell}(1))\in \W_S(R),  
$$
and $\epsilon_{n,S}:=\frac{1}{n}V_n\left(\epsilon_{1,S/n}\right)$ for all positive integers $n$ with $(n,p)=1$.
%\end{align*}
%% \begin{align*}
%% \epsilon_{1,S}&:=\prod_{\substack{\text{primes $\ell\neq p$}\\ S/\ell\neq \emptyset}} (1-\frac{1}{\ell}V_{\ell}(1))\in \W_S(R),  \\
%% \epsilon_{n,S}&:=\frac{1}{n}V_n\left(\epsilon_{1,S/n}\right)  \quad \text{for all $n\geq 1$ with $(n,p)=1$.}
%% \end{align*}
Of course, if $S/n=\emptyset$ then $\epsilon_{S,n}=0$.
In the following we will simply write $\epsilon_n$ for $\epsilon_{n,S}$.
For all positive integers $n\neq n'$ with $(n,p)=1=(n',p)$ the equalities
$$
\epsilon_{n}^2=\epsilon_{n}, \qquad \epsilon_{n}\epsilon_{n'}=0,
$$
hold. Moreover, if $(m,p)=1=(n,p)$ then
$$
F_m(\epsilon_n)=\begin{cases} \epsilon_{n/m} & \text{if $m\mid n$,} \\
                              0  & \text{if $m\nmid n$}.\end{cases}
$$
Since $\sum_{(n,p)=1} \epsilon_n=1$ we obtain a decomposition of rings
\begin{equation}\label{equation-epsilon-decomposition-Witt-vectors}
\W_S(R)=\prod_{n\geq 1, (n,p)=1} \epsilon_n \W_S(R).  
\end{equation}

\begin{notation}\label{notation-S-p}
For a finite truncation set $S$ we denote by $S_p$ the elements in $S$ 
that are $p$-powers, that is 
$
S_p=S\cap \{p^i\mid i\geq 0\}.
$   
\end{notation}

The map $$R_{(S/n)_p}^{S/n}\circ F_n:\W_S(R)\xr{} \W_{(S/n)_p}(R)$$ induces an 
isomorphism $\epsilon_n \W_S(R) \cong \W_{(S/n)_p}(R)$. Thus 
$$
M\mapsto \bigoplus_{n\geq 1,(n,p)=1} \epsilon_n M 
$$
defines an equivalence of categories
\begin{equation}\label{equation-epsilon-decomposition}
(\text{$\W_S(R)$-modules})\xr{\cong} \prod_{n\geq 1, (n,p)=1}(\text{$\W_{(S/n)_p}(R)$-modules}).  
\end{equation}

\subsubsection{} The following two lemmas are concerned with maximal ideals in $\W_S(R)$. 
\begin{lemma}\label{lemma-points-of-WSR}
  Let $R$ be a ring. Let $S$ be a finite truncation set. For every maximal ideal $\mathfrak{m}\subset \W_S(R)$
there exists a maximal ideal $\mathfrak{p}\subset R$ such that $ \W_S(R) \xr{} \W_S(R)/ \mathfrak{m}$ factors through $\W_S(R_{\mathfrak{p}})$.
\begin{proof}
Set $k=\W_S(R)/ \mathfrak{m}$, we distinguish two cases:
\begin{enumerate}
\item $k$ has characteristic $0$,
\item $k$ has characteristic $p>0$.
\end{enumerate}
In the first case we can factor 
$$
\W_S(R) \xr{} \W_S(R)\otimes_{\Z}\Q\xr{=} \W_S(R\otimes_{\Z}\Q)\xr{} k.
$$
Since $\W_S(R\otimes_{\Z}\Q)\xr{gh,\cong} \prod_{s\in S}R\otimes \Q$, the claim follows.

Suppose now that $k$ has characteristic $p>0$. We have a factorization 
$$\W_S(R) \xr{} \W_S(R)\otimes_{\Z}\Z_{(p)}\xr{=} \W_S(R\otimes_{\Z}\Z_{(p)})\xr{} k.$$
By decomposing 
\begin{multline*}
\W_S(R\otimes \Z_{(p)}) \xr{=} \prod_{n\geq 1,(n,p)=1} \epsilon_n \W_S(R\otimes \Z_{(p)}) \\ \xr{\cong,\prod_n R^{S/n}_{(S/n)_p}\circ F_n} \prod_{n\geq 1,(n,p)=1} \W_{(S/n)_p}(R\otimes \Z_{(p)}),
\end{multline*}
we can reduce to the case where $S$ consists only of $p$-powers. 
Finally, $V_{p}(a)^2=pV_{p}(a^2)$, for all $a\in \W_{S/p}(R\otimes \Z_{(p)})$, hence $V_{p}(a)$ maps to zero in $k$. Therefore $\W_S(R\otimes \Z_{(p)})\xr{} k$ factors through $\W_{S}(R\otimes \Z_{(p)})\xr{} \W_{\{1\}}(R\otimes \Z_{(p)})=R\otimes \Z_{(p)}\xr{\rho} k$. In this case we can take 
$\mathfrak{p}=\ker(R\xr{} R\otimes \Z_{(p)} \xr{\rho} k).$
\end{proof}
\end{lemma}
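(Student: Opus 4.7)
The plan is to split into cases according to the characteristic of $k := \W_S(R)/\mathfrak{m}$, and in each case to factor $\W_S(R) \to k$ through a $\Z$-localization of $R$, from which a maximal ideal $\mathfrak{p}$ can be extracted by a standard pullback argument.

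If $\mathrm{char}(k) = 0$, every prime is invertible in $k$, so the map factors through $\W_S(R) \otimes_\Z \Q \cong \W_S(R \otimes_\Z \Q)$. Since $R \otimes \Q$ is $\Z$-torsion-free, the ghost map provides an isomorphism $\W_S(R \otimes \Q) \cong \prod_{s \in S} R \otimes \Q$, recalled earlier. Because $k$ is a field and the product is finite, the map to $k$ factors through projection onto a single factor, giving a ring homomorphism $R \otimes \Q \to k$.

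If $\mathrm{char}(k) = p > 0$, then only the primes $\ell \neq p$ are inverted, so the map factors through $\W_S(R \otimes \Z_{(p)})$. The next step is to apply the $\epsilon_n$-decomposition of Section \ref{section-epsilon-decomposition}, which rewrites this ring as $\prod_{(n,p)=1} \W_{(S/n)_p}(R \otimes \Z_{(p)})$; the map to $k$ again picks out one factor, so we may assume $S$ consists of $p$-powers. At this point the identity $V_p(a)^2 = p V_p(a^2)$, together with $p = 0$ in $k$, forces the image of every $V_p(a)$ to be nilpotent, and thus zero since $k$ is a field. Consequently the map factors through $\W_{\{1\}}(R \otimes \Z_{(p)}) = R \otimes \Z_{(p)}$.

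In either case we have produced a ring map $R \to k$ that factors through a $\Z$-localization of $R$. Let $\mathfrak{q}$ be its kernel and choose any maximal ideal $\mathfrak{p} \supset \mathfrak{q}$ of $R$; such a $\mathfrak{p}$ exists because $R/\mathfrak{q}$ embeds in the field $k$, so $\mathfrak{q}$ is proper. Then every $f \in R \setminus \mathfrak{p}$ has nonzero, hence invertible, image in $k$, and by multiplicativity of the Teichm\"uller map so does $[f]$. Using the isomorphism $T^{-1}\W_S(R) \cong \W_S(T^{-1}R)$ recalled above for $T$ the Teichm\"uller image of $R \setminus \mathfrak{p}$, the original map factors through $\W_S(R_{\mathfrak{p}})$, as desired. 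The substantive step is the mixed-characteristic reduction from $p$-power $S$ down to $\W_{\{1\}}$: it is the only argument which does not reduce to ghost-component considerations and essentially requires both $p = 0$ and the field structure of $k$.
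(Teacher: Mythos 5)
Your proof is correct and follows essentially the same route as the paper: split on the characteristic of $k$, use the ghost isomorphism for the characteristic-zero case, and for the characteristic-$p$ case reduce via the $\epsilon_n$-decomposition to $p$-typical $S$ and then to $\W_{\{1\}}$ using $V_p(a)^2 = pV_p(a^2)$. The only (harmless) deviation is at the end: the paper takes $\mathfrak{p}$ to be the kernel of the resulting ring map $R\to k$ (which is automatically maximal, since that map is surjective, though the paper leaves this implicit), whereas you choose any maximal ideal containing that kernel and then verify the factorization through $\W_S(R_{\mathfrak{p}})$ explicitly via the Teichm\"uller multiplicative set and the isomorphism $T^{-1}\W_S(R)\cong\W_S(T^{-1}R)$.
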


\begin{lemma}\label{lemma-WSRp-local-more-general}
Let $p$ be a prime. Let $R$ be a ring such that every  maximal ideal $\mf{p}$ satisfies ${\rm char}(R/\mf{p})=p>0$.
Let $S$ be a $p$-typical finite truncation set. 
Then every maximal ideal $\mf{m}$ of $\W_S(R)$ is of the form
$
\ker(\W_S(R)\xr{R^S_{\{1\}}} R\xr{} R/\mf{p}),
$
for a unique maximal ideal $\mf{p}$ of $R$.
\begin{proof}
 Let $\mf{m}$ be a maximal ideal of $\W_S(R)$, set $k=\W_S(R)/\mf{m}$. 
We claim that ${\rm char}(k)=p$. Suppose that ${\rm char}(k)\neq p$. From the 
commutative diagram 
$$
\xymatrix
{
\W_S(R) \ar[r]\ar[d]^{gh}
&
\W_S(R)\otimes \Z[p^{-1}] \ar[r] \ar[d]^{gh}_{\cong}
&
k
\\
\prod_{s\in S} R\ar[r]
&
\prod_{s\in S} R\otimes \Z[p^{-1}] \ar[ru]
&
}
$$ 
we conclude that there is a factorization $\W_S(R)\xr{gh_i}R\xr{} k$, but there are no epimorphism $R\xr{} k$ to a field of characteristic $\neq p$.

Thus we may suppose that ${\rm char}(k)=p$. Because $V_p(a)^2=pV_p(a^2)$ for 
all $a\in \W_{S/p}(R)$, we obtain a factorization $\W_S(R)\xr{R^S_{\{1\}}}R\xr{} k$, which defines $\mf{p}:=\ker(R\xr{} k)$.
\end{proof}
\end{lemma}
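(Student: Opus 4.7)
The plan is to argue in two stages. Set $k = \W_S(R)/\mf{m}$. I would first show $\mathrm{char}(k) = p$, and then deduce that $\W_S(R) \to k$ factors through the restriction map $R^S_{\{1\}}\colon \W_S(R) \to R$.

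For the first step, suppose for contradiction that $\mathrm{char}(k) \neq p$, so that $p$ is a unit in $k$. Then $\W_S(R) \to k$ factors through $\W_S(R) \otimes \Z[p^{-1}]$. Since $S$ is $p$-typical, every $n \in S$ is a $p$-power, hence a unit after inverting $p$, so the ghost map induces a ring isomorphism
$$\W_S(R) \otimes \Z[p^{-1}] \xr{gh,\cong} \prod_{s \in S} R \otimes \Z[p^{-1}].$$
A surjection from a finite product of rings onto a field must factor through one of the factor projections, since the orthogonal idempotents must map to $0$ or $1$ in $k$ with exactly one of them hitting $1$. Composing back with $R \to R\otimes \Z[p^{-1}]$, I obtain a surjection $R \twoheadrightarrow k$ whose kernel is a maximal ideal of $R$ with residue characteristic $\neq p$, contradicting the hypothesis on $R$.

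For the second step, $\mathrm{char}(k) = p$ means $p \in \mf{m}$. For any $a \in \W_{S/p}(R)$, the Witt vector identity $V_p(a)^2 = p\cdot V_p(a^2)$ places $V_p(a)^2$ in $\mf{m}$; since $k$ is a field, hence a domain, this forces $V_p(a) \in \mf{m}$. Because $S$ is $p$-typical, applying the exact sequence (\ref{equation-short-exact-seq-S/n-S-T}) with $n = p$ gives $T = \{1\}$, so
$$\ker\bigl(R^S_{\{1\}}\colon \W_S(R) \to R\bigr) = V_p\bigl(\W_{S/p}(R)\bigr) \subset \mf{m}.$$
Hence $\W_S(R) \to k$ factors as $\W_S(R) \xr{R^S_{\{1\}}} R \twoheadrightarrow k$, and the kernel $\mf{p}$ of the second map is a maximal ideal of $R$ with $R/\mf{p} = k$. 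Uniqueness is automatic: $\mf{p}$ is forced to equal the kernel of this induced surjection $R \twoheadrightarrow k$.

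The only substantive point is the characteristic computation, and the key leverage there is exactly the $p$-typicality of $S$: inverting the single prime $p$ already diagonalises $\W_S(R)$ via the ghost map, so the analysis reduces cleanly to a single residue characteristic and avoids the more elaborate idempotent decomposition that would be needed for a general finite truncation set (compare Lemma~\ref{lemma-points-of-WSR}). Once that obstacle is past, the remainder is a one-line consequence of the $V_p$-image lying in $\mf{m}$.
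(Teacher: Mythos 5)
Your proof follows the same two-step strategy as the paper: use the ghost-map identification $\W_S(R)\otimes_\Z\Z[p^{-1}]\cong\prod_{s\in S}R\otimes_\Z\Z[p^{-1}]$ (valid precisely because $S$ is $p$-typical) to rule out $\mathrm{char}(k)\neq p$, and then use $V_p(a)^2=pV_p(a^2)$ together with $\ker(R^S_{\{1\}})=V_p(\W_{S/p}(R))$ to obtain the factorization through $R$. The only place I would tighten the wording is the phrase ``composing back with $R\to R\otimes\Z[p^{-1}]$, I obtain a surjection $R\twoheadrightarrow k$'': surjectivity of $R\otimes\Z[p^{-1}]\to k$ does not by itself make the pre-composite with the non-surjective localization map surjective. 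The correct (and easy) justification, which the paper's $\W_S(R)\xr{gh_i}R\to k$ notation is shorthand for, is that the original surjection $\W_S(R)\twoheadrightarrow k$ already factors through the integral ghost component $gh_i\colon\W_S(R)\to R$, so $R\to k$ inherits surjectivity from the composite. With that one sentence added, the argument is complete and matches the paper's.
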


\subsection{Relative de Rham-Witt complex}

For every commutative ring $A$ we have the absolute de Rham-Witt complex $$ S\mapsto \W_S\Omega^*_{A}$$ constructed by Hesselholt \cite{H}, at our disposal.
The absolute de Rham-Witt complex is the initial object in the category of Witt complexes \cite[\textsection4]{H}. In this section we will define
the relative version, which is studied in this paper. 

\begin{definition}
Let $A$ be an $R$-algebra.  Let $S$ be a truncation set and $q\geq 0$. We define 
$$
\mathbb{W}_S\Omega^{q}_{A/R}= \varprojlim_{\substack{T\subset S\\ \text{$T$ finite}}} \mathbb{W}_T\Omega^{q}_{A}/\left( \mathbb{W}_T\Omega^{1}_{R} \cdot \mathbb{W}_T\Omega^{q-1}_{A}\right) 
$$
For $q=0$, the definition means $\mathbb{W}_S\Omega^{0}_{A/R}=\W_S(A)$.
%As usual, we write $\mathbb{W}\Omega^{q}_{A/R}$ for $\mathbb{W}_\mathbb{N}\Omega^{q}_{A/R}$. 
\end{definition}
We get an induced anti-symmetric graded algebra structure on $\mathbb{W}_S\Omega^{*}_{A/R}$, that is, $\omega_1\cdot \omega_2=(-1)^{\deg(\omega_1)\deg(\omega_2)}\omega_2\cdot \omega_1$.

Recall that by construction of $\W_S\Omega^*_A$, there is, for all finite truncation sets $S$, a surjective morphism of graded $\W_S(A)$-algebras 
\begin{equation}\label{equation-tensor-to-absolute-dRW} 
  \pi:T^*_{\W_S(A)} \Omega^1_{\W_S(A)} \xr{} \W_S\Omega^*_{A},
\end{equation}
such that $\pi(da)=da$ for all $a\in \W_S(A)$.

\begin{lemma}\label{lemma-morph-de-Rham-to-de-Rham-Witt}
Let $S$ be a finite truncation set.  
\begin{enumerate}
  \item The morphism \eqref{equation-tensor-to-absolute-dRW} induces a surjective morphism of anti-symmetric graded algebras 
    \begin{equation}\label{equation-dR-to-absolute-dRW}
      \pi:\Omega^*_{\W_S(A)/\W_S(R)} \xr{} \W_S\Omega^*_{A/R},
    \end{equation}
   which by abuse of notation is called $\pi$ again.
  \item $\W_S\Omega^*_{A/R}$ is a differential graded algebra and (\ref{equation-dR-to-absolute-dRW}) is compatible with the differential. 
  \end{enumerate}
\begin{proof}
  For (1). This follows from $\pi(da\otimes da)\in d\log[-1]\cdot \W_S\Omega^1_A$ \cite[\textsection3]{H} and $d\log[-1]\in \W_S\Omega^1_R$. 

For (2). The differential $d:\W_S\Omega^*_{A/R}\xr{} \W_S\Omega^*_{A/R}$ is well-defined, because $\W_S\Omega^*_R$ is generated by $\W_S\Omega^1_R$. It satisfies $d\circ d=0$, because $d\log[-1]\in \W_S\Omega^1_R$. The compatibility of $\pi$ with $d$ follows from $\pi(da)=da$ for all $a\in \W_S(A)$.  
\end{proof}
\end{lemma}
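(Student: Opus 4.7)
The plan is to mirror the two logical steps in the statement: (1) is a factorization problem — the universal map $\pi$ from the tensor algebra must descend past the K\"ahler differentials of the Witt vectors — and (2) is a descent question for the differential on the absolute complex. Both rely on the same two inputs from Hesselholt's construction: the relation $\pi(da\otimes da)\in d\log[-1]\cdot \W_S\Omega^1_A$ and the containment $d\log[-1]\in \W_S\Omega^1_R$.

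For (1) I would consider the composition
$$
T^*_{\W_S(A)}\Omega^1_{\W_S(A)}\xrightarrow{\pi} \W_S\Omega^*_A \twoheadrightarrow \W_S\Omega^*_{A/R}
$$
and verify that it annihilates the kernel of the projection $T^*_{\W_S(A)}\Omega^1_{\W_S(A)}\twoheadrightarrow \Omega^*_{\W_S(A)/\W_S(R)}$. That kernel is generated, as a two-sided ideal, by the symmetric tensors $da\otimes da$ for $a\in \W_S(A)$ together with the one-forms $db$ for $b$ in the image of $\W_S(R)$. The first family is killed in the relative quotient by Hesselholt's identity combined with $d\log[-1]\in \W_S\Omega^1_R$; the second family maps into $\W_S\Omega^1_R\subset \W_S\Omega^1_A$, which is exactly what we quotient by in degree one. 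Surjectivity of the induced map on $\Omega^*_{\W_S(A)/\W_S(R)}$ is then inherited from the surjectivity of $\pi$ and of $\W_S\Omega^*_A\twoheadrightarrow \W_S\Omega^*_{A/R}$.

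For (2) I would show that the differential on $\W_S\Omega^*_A$ preserves the graded ideal $I:=\W_S\Omega^1_R\cdot \W_S\Omega^*_A$, so that it descends to the quotient. For a generator $\omega\eta$ with $\omega\in \W_S\Omega^1_R$ and $\eta\in \W_S\Omega^{q-1}_A$, the Leibniz rule gives $d(\omega\eta)=d\omega\cdot \eta-\omega\cdot d\eta$; the second summand is manifestly in $I$, while for the first I would invoke that $\W_S\Omega^*_R$ is generated as a $\W_S(R)$-algebra by $\W_S\Omega^1_R$, forcing $d\omega\in \W_S\Omega^2_R\subseteq \W_S\Omega^1_R\cdot \W_S\Omega^1_R\subseteq I$. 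For $d\circ d=0$ on the relative complex, I would observe that the iterated differential on $\W_S\Omega^*_A$ is a $d\log[-1]$-multiple, hence lies in $I$ and vanishes after passage to $\W_S\Omega^*_{A/R}$. Compatibility of $\pi$ with $d$ is then automatic from $\pi(da)=da$: both $\pi\circ d$ and $d\circ \pi$ are graded derivations on $\Omega^*_{\W_S(A)/\W_S(R)}$ landing in $\W_S\Omega^{*+1}_{A/R}$ and agreeing on degree zero, so by the universal property of $\Omega^*$ they coincide.

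The only step that is not pure bookkeeping is the verification that $d$ preserves the ideal $I$, which genuinely uses the non-obvious fact — specific to the absolute big de Rham--Witt complex of the base ring alone — that $\W_S\Omega^*_R$ is algebra-generated in degree one. Everything else amounts to tracking the two ``universal'' inputs $\pi(da\otimes da)\in d\log[-1]\cdot \W_S\Omega^1_A$ and $d\log[-1]\in \W_S\Omega^1_R$ through the relative quotient.
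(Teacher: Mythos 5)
Your proposal is correct and takes essentially the same route as the paper's very terse proof, just spelled out in full: for (1) you check that the two families of generators of the kernel of $T^*_{\W_S(A)}\Omega^1_{\W_S(A)}\to\Omega^*_{\W_S(A)/\W_S(R)}$ — the squares $da\otimes da$ and the pull-backs of $\Omega^1_{\W_S(R)}$ — die in the relative quotient by Hesselholt's relation and by $d\log[-1]\in\W_S\Omega^1_R$; for (2) you verify that $d$ preserves the ideal $I=\W_S\Omega^1_R\cdot\W_S\Omega^{*-1}_A$ using degree-one generation of $\W_S\Omega^*_R$, and that the $d\log[-1]$-obstruction to $d^2=0$ lies in $I$. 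One small imprecision at the very end: two $\pi$-derivations out of $\Omega^*_{\W_S(A)/\W_S(R)}$ that agree on degree zero need not coincide (a $\pi$-derivation vanishing on $\Omega^0$ can be nonzero in degree one), so you should also note that $\pi\circ d$ and $d\circ\pi$ agree on the generators $db$ of degree one, which amounts to $d(db)=0$ in $\W_S\Omega^*_{A/R}$ — i.e.\ the same $d\circ d=0$ you established a few lines earlier.
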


\subsubsection{} 
Induced from the absolute de Rham-Witt complex, we obtain for all positive integers $n$:
\begin{align}
F_n:&\mathbb{W}_S\Omega^{q}_{A/R}\xr{} \mathbb{W}_{S/n}\Omega^{q}_{A/R}\label{equation-F-relative-over-Z},\\
V_n:&\mathbb{W}_{S/n}\Omega^{q}_{A/R} \xr{} \mathbb{W}_S\Omega^{q}_{A/R}, \label{equation-V-relative-over-Z}
\end{align}  
and $S\mapsto \mathbb{W}_S\Omega^{*}_{A/R}$ forms a Witt complex. Note that, computed  in the absolute de Rham-Witt complex,
we have
\begin{align*}
  V_n(da\cdot \omega)=V_n(F_ndV_n(a)\cdot \omega)=dV_n(a)\cdot V_n(\omega),
\end{align*}
hence $V_n(\W_{S/n}\Omega^1_R\cdot \W_{S/n}\Omega^{q-1}_A)\subset \W_S\Omega^1_R\cdot \W_S\Omega^{q-1}_A$.
%\begin{lemma}\label{lemma-identities-relative-complex}
  
%Let $n$ be a  positive integer.  
The following equalities hold for the maps (\ref{equation-F-relative-over-Z}), (\ref{equation-V-relative-over-Z}):
$$
V_nF_nd=dV_nF_n, \quad dV_nd=0.
$$
%\end{lemma}

\begin{proposition}\label{proposition-relative-de-Rham-Witt-complex-initial}
The Witt complex $S\mapsto \W_S\Omega^*_{A/R}$ is the initial object in the
category of Witt complexes over $A$ with $\W(R)$-linear differential. 
\begin{proof}
  Let $S\mapsto E_S^*$ be a Witt complex over $A$ with $\W(R)$-linear differential, that is, $d(a\omega)=ad(\omega)$ for $a\in \W_S(R)$ and 
$\omega\in E_S^*$. We only need to show that the canonical morphism 
$$
[S\mapsto \W_S\Omega^*_{A}]\xr{} [S\mapsto E_S^*]
$$
factors through $[S\mapsto \W_S\Omega^*_{A/R}]$. It is enough to check this for finite truncation sets. Because $\pi$ (\ref{equation-tensor-to-absolute-dRW}) is surjective, we conclude that $\mathbb{W}_S\Omega^1_{R}$ is generated by elements of the form $da$ with $a\in \W_S(R)$, which implies the claim. 
\end{proof}
\end{proposition}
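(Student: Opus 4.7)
The plan is to appeal to the universal property of the absolute de Rham-Witt complex in the category of Witt complexes over $A$, and then verify that the induced morphism automatically kills the relations defining the relative complex. Given a Witt complex $S\mapsto E^*_S$ over $A$ with $\W(R)$-linear differential, Hesselholt's initiality theorem yields a unique morphism of Witt complexes $\phi\colon [S\mapsto \W_S\Omega^*_A] \to [S\mapsto E^*_S]$. Since $\W_S\Omega^*_{A/R}$ is defined as an inverse limit over its finite truncation subsets (and $\phi$ is compatible with restriction), I may restrict to finite $S$ and show that $\phi_S$ annihilates the ideal generated by $\W_S\Omega^1_R$ in $\W_S\Omega^*_A$.

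Because $\phi_S$ is a morphism of graded algebras, it suffices to prove that $\phi_S$ vanishes on $\W_S\Omega^1_R$ itself. For this I would invoke Lemma~\ref{lemma-morph-de-Rham-to-de-Rham-Witt}(1) applied with $A$ replaced by $R$: it furnishes a surjection $\pi\colon \Omega^1_{\W_S(R)/\Z} \to \W_S\Omega^1_R$, so that $\W_S\Omega^1_R$ is generated as a $\W_S(R)$-module by exact forms $da$ with $a\in \W_S(R)$. Hence it is enough to verify that $\phi_S(da)=0$ for every $a\in\W_S(R)$.

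Compatibility of $\phi$ with the differential then gives $\phi_S(da)=d(\phi_S(a))$, where $\phi_S(a)$ lies in the image of the structural ring map $\W_S(R) \to E^0_S$. For any such element $\bar r$ the Leibniz rule together with the $\W(R)$-linearity hypothesis yields $d(\bar r)=d(\bar r\cdot 1)=\bar r\cdot d(1)=0$ (using $d(1)=0$ in any DGA). Therefore $\phi_S(da)=0$, and the desired factorisation through the relative complex exists; uniqueness is inherited from the absolute case. The only non-formal input is the surjectivity of $\pi$ onto $\W_S\Omega^1_R$, which is already delivered by Lemma~\ref{lemma-morph-de-Rham-to-de-Rham-Witt}(1); once this is in hand there is no genuine obstacle and the argument is a short manipulation of the axioms.
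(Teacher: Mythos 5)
Your proof is correct and follows essentially the same route as the paper: both rely on the fact that $\W_S\Omega^1_R$ is generated by exact forms $da$, $a\in\W_S(R)$, and that the $\W(R)$-linearity forces the image of such $da$ to vanish in $E^1_S$. One small imprecision: the generation of $\W_S\Omega^1_R$ by $da$'s comes most directly from the surjection \eqref{equation-tensor-to-absolute-dRW} applied to $R$ (as the paper cites), rather than from Lemma~\ref{lemma-morph-de-Rham-to-de-Rham-Witt}(1) with ``$A$ replaced by $R$'' (which, read literally with base ring $R$, gives the trivial surjection onto $\W_S\Omega^*_{R/R}$); but the intended content is the same.
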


As a corollary we obtain the following statement.

\begin{corollary}\label{corollary-de-Rham-Witt-complex-localization}
Let $A$ be an $R$-algebra, let $p$ be a prime, and set $R':=R\otimes_{\Z}\Z_{(p)},A':=A\otimes_{\Z}\Z_{(p)}$. 
There is a unique isomorphism 
$$
[S\mapsto \W_S\Omega^*_{A'/R'}]\xr{} [S\mapsto \varprojlim_{\substack{T\subset S\\\text{$T$ finite}}}\W_T\Omega^*_{A/R}\otimes_{\Z}\Z_{(p)}]
$$
of Witt complexes over $A'$.
\end{corollary}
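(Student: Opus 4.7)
The plan is to apply the universal property from Proposition~\ref{proposition-relative-de-Rham-Witt-complex-initial} in both directions. Since both sides in the statement are defined as inverse limits over finite truncation subsets $T\subset S$, it suffices to produce a natural isomorphism for finite $S$; the general case then follows by passing to $\varprojlim_T$. For such finite $S$, set $E^*_S := \W_S\Omega^*_{A/R}\otimes_\Z \Z_{(p)}$. Using the identifications $\W_S(A)\otimes_\Z \Z_{(p)}\cong \W_S(A')$ and $\W_S(R)\otimes_\Z \Z_{(p)}\cong \W_S(R')$ recalled in the Witt vectors subsection, the collection $S\mapsto E^*_S$ inherits from $S\mapsto \W_S\Omega^*_{A/R}$ the structure of a Witt complex over $A'$ with $\W(R')$-linear differential: each defining axiom (Frobenius and Verschiebung relations, differential, product, $\W_S(A)$-module structure) is preserved by tensoring with $\Z_{(p)}$, and the induced $\W_S(A)\otimes\Z_{(p)}$-module structure is canonically a $\W_S(A')$-module structure.

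By Proposition~\ref{proposition-relative-de-Rham-Witt-complex-initial} applied to $A'/R'$, there is a unique morphism of Witt complexes over $A'$
$$\Phi_S:\W_S\Omega^*_{A'/R'}\xr{} E^*_S.$$
For an inverse, observe that $\W_S\Omega^*_{A'/R'}$ is a Witt complex over $A$ via $A\to A'$, whose differential is $\W(R)$-linear via $R\to R'$. The universal property of $\W_S\Omega^*_{A/R}$ then gives a canonical morphism $\alpha:\W_S\Omega^*_{A/R}\xr{}\W_S\Omega^*_{A'/R'}$, and since the target is a $\Z_{(p)}$-module, $\alpha$ factors uniquely through $\Psi_S: E^*_S\xr{}\W_S\Omega^*_{A'/R'}$.

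Both compositions are then the identity. The composition $\Psi_S\circ\Phi_S$ is an endomorphism of $\W_S\Omega^*_{A'/R'}$ in the category of Witt complexes over $A'$ with $\W(R')$-linear differential, hence equals $\id$ by the uniqueness clause of the universal property. For the other composition, precompose with the canonical map $\iota:\W_S\Omega^*_{A/R}\xr{} E^*_S$; then $\Phi_S\circ \Psi_S\circ \iota$ is a morphism of Witt complexes over $A$ with $\W(R)$-linear differential, so by the universal property of $\W_S\Omega^*_{A/R}$ it coincides with $\iota$. Since $E^*_S$ is generated as a $\Z_{(p)}$-module by $\iota(\W_S\Omega^*_{A/R})$ and $\Phi_S\circ \Psi_S$ is $\Z_{(p)}$-linear, this forces $\Phi_S\circ \Psi_S=\id$. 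Uniqueness of the isomorphism in the statement is immediate from the uniqueness clause in Proposition~\ref{proposition-relative-de-Rham-Witt-complex-initial}.

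I expect the only real technical obstacle to be the verification that $E^*_S$ genuinely forms a Witt complex over $A'$ with $\W(R')$-linear differential. This amounts to going through Hesselholt's axioms and checking that each is preserved under tensoring with $\Z_{(p)}$, using the compatibilities of $\W_S(-)$ with localization at $\Z_{(p)}$ recorded earlier; it is routine and contains no surprise, since nothing in the definition of a Witt complex interferes with inverting primes other than $p$.
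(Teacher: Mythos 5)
Your proof is correct and is essentially the approach the paper intends: the corollary is stated immediately after Proposition~\ref{proposition-relative-de-Rham-Witt-complex-initial} with no written proof, precisely because it is a formal consequence of that universal property, which is what you have carried out. The two-sided argument — first viewing $E^*_S=\W_S\Omega^*_{A/R}\otimes_\Z\Z_{(p)}$ as a Witt complex over $A'$ with $\W(R')$-linear differential to get $\Phi$, then restricting $\W\Omega^*_{A'/R'}$ along $A\to A'$, $R\to R'$ and factoring through the localization to get $\Psi$, and finally using initiality plus the $\Z_{(p)}$-generation of $E^*_S$ to check both compositions are the identity — is exactly the standard way to deduce such a comparison from initiality, and all the ingredients you invoke (the identification $\W_S(A)\otimes_\Z\Z_{(p)}\cong\W_S(A')$ for finite $S$, the fact that localization preserves the axioms of a Witt complex, the fact that the ``$\W(R')$-linear'' condition is a property of the objects so uniqueness in the larger category of Witt complexes over $A'$ follows) are sound.
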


\begin{proposition}\label{proposition-epsilon-decomposion-deRham-Witt}
Let $R$ be a $\Z_{(p)}$-algebra and let $A,B$ be $R$-algebras. Let $S$ be a finite truncation set. 
\begin{enumerate}
\item Via the equivalence from \eqref{equation-epsilon-decomposition}
we have 
\begin{equation}\label{equation-epsilon-decomposion-deRham-Witt}
\W_S\Omega^*_{A/R}\mapsto \bigoplus_{n\geq 1, (n,p)=1} \W_{(S/n)_p}\Omega^*_{A/R}.  
\end{equation}
\item For a morphism $f:A\xr{} B$ the induced morphism $f_S:\W_S\Omega^*_{A/R}
\xr{} \W_S\Omega^*_{B/R}$ maps to
$$
f_S\mapsto \bigoplus_{n\geq 1, (n,p)=1} f_{(S/n)_p}
$$
via the equivalence from \eqref{equation-epsilon-decomposition}.
\end{enumerate}
\begin{proof}
For (1). The claim follows from \cite[Proposition~1.2.5]{HM}. In the notation of loc.~cit.~the right hand side (\ref{equation-epsilon-decomposion-deRham-Witt})
equals $i_{!}i^*\W\Omega^*_{A/R}$, and $i^*,i_{!}$ preserve initial objects, since
both functors admit a right adjoint. 

For (2). Follows immediately from the construction in (1).
\end{proof}
\end{proposition}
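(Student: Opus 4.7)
The plan is to combine the universal property of $\W_S\Omega^*_{A/R}$ from Proposition \ref{proposition-relative-de-Rham-Witt-complex-initial} with an adjunction between categories of Witt complexes, in the spirit of Hesselholt-Madsen.

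Let $\mathcal{W}$ denote the category of Witt complexes over $A$ with $\W(R)$-linear differential (indexed by all finite truncation sets), and let $\mathcal{W}_p$ be its $p$-typical analogue. The restriction functor $i^*:\mathcal{W}\to \mathcal{W}_p$ admits a left adjoint $i_!$ and a right adjoint $i_*$. The first key step is to establish the formula
$$(i_!\, i^* E)^*_S \;\cong\; \bigoplus_{n\geq 1,\,(n,p)=1} E^*_{(S/n)_p},$$
with the Witt complex structure on the right-hand side assembled from the idempotents $\epsilon_n$ together with the compatibilities $F_m(\epsilon_n)=\epsilon_{n/m}$ recalled in Section \ref{section-epsilon-decomposition}. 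This is essentially the content of \cite[Proposition~1.2.5]{HM}.

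Next, apply the formula to $E = \W\Omega^*_{A/R}$, which is initial in $\mathcal{W}$ by Proposition \ref{proposition-relative-de-Rham-Witt-complex-initial}. Since $i^*$ has the right adjoint $i_*$ and $i_!$ has the right adjoint $i^*$, both functors are left adjoints and therefore preserve initial objects. Hence $i_!\, i^* \W\Omega^*_{A/R}$ is again an initial object of $\mathcal{W}$, so canonically isomorphic to $\W\Omega^*_{A/R}$ itself. The formula then gives the decomposition $\W_S\Omega^*_{A/R}\cong \bigoplus_n \W_{(S/n)_p}\Omega^*_{A/R}$ as Witt complexes, which in particular realizes the equivalence \eqref{equation-epsilon-decomposition} applied to $\W_S\Omega^*_{A/R}$, proving (1). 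Part (2) follows because the canonical isomorphism is natural in $A$: for $f:A\to B$, the componentwise maps $f_{(S/n)_p}$ assemble to $f_S$ via the adjunction unit.

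The main obstacle is the verification of the formula for $i_!\, i^*$ --- concretely, endowing the direct sum $\bigoplus_n E^*_{(S/n)_p}$ with the structure of a Witt complex (with coherent Frobenius, Verschiebung, and differential satisfying all axioms of \cite[\textsection 4]{H}) and checking that this construction is indeed the left adjoint of $i^*$. The compatibilities $F_m\circ V_m = m$, $F_n(\epsilon_m) = \epsilon_{m/n}$ or $0$, etc., dictate precisely how the summands must interact. Once this verification is in place, the rest of the proof is a formal consequence of initiality and preservation of colimits by left adjoints.
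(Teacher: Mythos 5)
Your proposal follows the paper's argument essentially verbatim: both invoke \cite[Proposition~1.2.5]{HM} to identify the right-hand side of \eqref{equation-epsilon-decomposion-deRham-Witt} with $i_!i^*\W\Omega^*_{A/R}$, and both conclude by observing that $i^*$ and $i_!$ preserve initial objects because each admits a right adjoint. The extra detail you supply (the explicit adjoint pairs, the naturality argument for part (2)) is a correct unpacking of what the paper leaves implicit, not a different route.
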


\begin{proposition}\label{proposition-comparison-Langer-Zink}
Let $R$ be a $\Z_{(p)}$-algebra,  let $A$ be an $R$-algebra. Then 
$$
n\mapsto \W_{\{1,p,\dots,p^{n-1}\}}\Omega^*_{A/R}
$$
is the relative de Rham-Witt complex $n\mapsto W_{n}\Omega^*_{A/R}$ defined by Langer and Zink \cite{LZ}.
\begin{proof}
  We have a restriction functor 
  \begin{multline*}
i^*:\text{(Witt systems over $A$ with $\W(R)$-linear differential)} \xr{}\\
\text{($F$-$V$-procomplexes over the $R$-algebra $A$),}   
  \end{multline*}
where we use the definition of \cite[\textsection4]{H} for the source category
and the definition of \cite[Introduction]{LZ} for the target category.
The functor $i^*$ admits a right adjoint functor $i_!$ defined in \cite[\textsection1.2]{HM}. Therefore $i^*([S\mapsto \W_S\Omega^*_{A/R}])$ is the 
initial object in the category of $F$-$V$-procomplexes as is the relative
de Rham-Witt complex constructed by Langer and Zink \cite{LZ}.
\end{proof}
\end{proposition}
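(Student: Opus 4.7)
The plan is to exploit universal properties on both sides. By Proposition~\ref{proposition-relative-de-Rham-Witt-complex-initial}, the system $[S\mapsto \W_S\Omega^*_{A/R}]$ is the initial object in the category $\mathcal{W}(A/R)$ of Witt complexes over $A$ (in the sense of \cite[\textsection4]{H}) equipped with a $\W(R)$-linear differential. On the other hand, by the main construction of \cite{LZ}, the Langer-Zink complex $n\mapsto W_n\Omega^*_{A/R}$ is the initial object in the category $\mathcal{FV}(A/R)$ of $F$-$V$-procomplexes over the $R$-algebra $A$. So if we can produce a functor $\mathcal{W}(A/R)\to\mathcal{FV}(A/R)$ that preserves initial objects and sends $[S\mapsto E_S^*]$ to $[n\mapsto E_{\{1,p,\dots,p^{n-1}\}}^*]$, we are done.

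First I would define the restriction functor $i^*:\mathcal{W}(A/R)\to \mathcal{FV}(A/R)$ by the assignment $[S\mapsto E_S^*]\mapsto [n\mapsto E_{\{1,p,\dots,p^{n-1}\}}^*]$, equipped with the restriction maps $R^{\{1,\dots,p^n\}}_{\{1,\dots,p^{n-1}\}}$, the Frobenius $F_p$, the Verschiebung $V_p$, and the differential inherited from the Witt complex. The verification that this object really lies in $\mathcal{FV}(A/R)$ is a matter of inspecting axioms: the relations $F_pV_p=p$, $V_p(F_p(x)y)=xV_p(y)$, $F_p\,d\,V_p=d$, and the Teichmüller relation for $F_p$ all appear on Hesselholt's list and survive restriction to $p$-typical truncation sets, while the $\W(R)$-linearity of $d$ on the source matches the $W(R)$-linearity required on the target (using that the image of $\W_S(R)$ in $E_S^0$ hits $W_n(R)$ after restriction).

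Next I would construct a right adjoint $i_!:\mathcal{FV}(A/R)\to\mathcal{W}(A/R)$ following the recipe of Hesselholt--Madsen \cite[\textsection1.2]{HM}: given an $F$-$V$-procomplex $M$, set
\[
(i_!M)_S^q \;=\; \prod_{n\ge 0\,:\,p^n\in S}\, M_{v_p(S,n)}^q
\]
(for an appropriate truncation index $v_p(S,n)$ depending on $S$) and equip it with the evident $F_m, V_m, d$ for all $m$, using that primes $\ell\neq p$ are invertible in $\W_S(R)$ to define $F_\ell,V_\ell$ via the idempotent decomposition \eqref{equation-epsilon-decomposion-deRham-Witt}. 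The adjunction $\Hom_{\mathcal{FV}}(i^*E,M)=\Hom_{\mathcal{W}}(E,i_!M)$ is then a direct check on generators. With the adjunction in hand, $i^*$ preserves colimits, and in particular sends the initial object to the initial object; evaluating at $[S\mapsto \W_S\Omega^*_{A/R}]$ gives precisely the stated identification with $n\mapsto W_n\Omega^*_{A/R}$.

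The main obstacle I anticipate is the construction and verification of the right adjoint $i_!$: one must exhibit, for every $F$-$V$-procomplex $M$, a coherent extension to all finite truncation sets together with a full system of operators $F_m,V_m$ for composite $m$, and check that the resulting object indeed satisfies the whole list of Hesselholt's Witt-complex axioms (in particular the cross relations $F_mV_n=V_nF_m$ for $(m,n)=1$, and $dF_n=nF_nd$ when $n$ is not $p$). Since $R$ is a $\Z_{(p)}$-algebra these composite operators are forced by the $\epsilon$-decomposition of Section~\ref{section-epsilon-decomposition}, so the reduction is clean; this is essentially what \cite[\textsection1.2]{HM} carries out, and I would simply invoke their construction rather than redo it by hand.
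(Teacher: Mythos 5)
Your proposal is correct and follows the paper's proof essentially verbatim: both sides are characterized as initial objects, the restriction functor $i^*$ is exhibited with a right adjoint $i_!$ following Hesselholt--Madsen \cite[\textsection1.2]{HM}, and the conclusion follows because a left adjoint preserves initial objects. One small slip in your sketch of $i_!$ --- the outer index should run over the integers $m\in S$ coprime to $p$ (via the $\epsilon$-decomposition of \ref{section-epsilon-decomposition}), not over $p$-powers in $S$ --- but this does not affect the argument since, like the paper, you ultimately invoke \cite{HM} for the precise construction.
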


\subsubsection{}
Let $S$ be a finite truncation set. 
Let $A\xr{} B$ be an \'etale morphism of $R$-algebras. For all $q\geq 0$ the induced morphism of $\W_S(B)$-modules
\begin{equation}\label{equation-relative-big-etale-base-change}
\W_S(B)\otimes_{\W_S(A)} \W_S\Omega^q_{A/R}\xr{\cong} \W_S\Omega^q_{B/R}  
\end{equation}
is an isomorphism. Indeed, this follows immediately from the analogous fact for the absolute de Rham-Witt complex \cite[Theorem~C]{H}.

\begin{lemma}
  Let $R'\xr{} R$ be an \'etale ring homomorphism. Let $A$ be an $R$-algebra. Then, for all truncation sets $S$,
$$
\W_S\Omega^*_{A/R'}\xr{} \W_S\Omega^*_{A/R}
$$
is an isomorphism. 
\begin{proof}
  We may assume that $S$ is finite. The assertion follows from
  \begin{align*}
\W_S\Omega^1_{R'}\otimes_{\W_S(R')}\W_S(A) &\xr{=}\W_S\Omega^1_{R'}\otimes_{\W_S(R')} \W_S(R) \otimes_{\W_S(R)} \W_S(A)\\
      &\xr{\cong} \W_S\Omega^1_R\otimes_{\W_S(R)} \W_S(A).    
  \end{align*}
\end{proof}
\end{lemma}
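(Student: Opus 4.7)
The plan is to reduce to finite truncation sets and then invoke the étale base-change isomorphism \eqref{equation-relative-big-etale-base-change} for the absolute de Rham-Witt complex.

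First I would reduce to the case where $S$ is a finite truncation set: both $\W_S\Omega^*_{A/R'}$ and $\W_S\Omega^*_{A/R}$ are by definition inverse limits, along finite $T\subset S$, of quotients of $\W_T\Omega^*_A$, and the comparison map is the inverse limit of the corresponding maps on finite truncation sets. So it suffices to treat finite $S$.

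For finite $S$, by the very definition of the relative de Rham-Witt complex, the comparison map $\W_S\Omega^*_{A/R'}\to \W_S\Omega^*_{A/R}$ is obtained from the identity on $\W_S\Omega^*_A$ after killing the two-sided ideals $I':=\W_S\Omega^1_{R'}\cdot \W_S\Omega^{*-1}_A$ and $I:=\W_S\Omega^1_R\cdot \W_S\Omega^{*-1}_A$. Since the factorization $R'\to R\to A$ induces a factorization of functorial maps $\W_S\Omega^1_{R'}\to \W_S\Omega^1_R\to \W_S\Omega^1_A$, we have $I'\subset I$, so the map is surjective. It remains to show $I\subset I'$, i.e.\ that the $\W_S(A)$-submodule of $\W_S\Omega^1_A$ generated by the image of $\W_S\Omega^1_R$ is already generated by the image of $\W_S\Omega^1_{R'}$.

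The key input is \eqref{equation-relative-big-etale-base-change} applied to the étale map $R'\to R$: the multiplication map
$$
\W_S\Omega^1_{R'}\otimes_{\W_S(R')} \W_S(R)\xr{\cong} \W_S\Omega^1_R
$$
is an isomorphism. Base-changing along $\W_S(R)\to \W_S(A)$ and composing with the displayed chain of isomorphisms from the paper, one sees that the images of $\W_S\Omega^1_{R'}\otimes_{\W_S(R')}\W_S(A)$ and of $\W_S\Omega^1_R\otimes_{\W_S(R)}\W_S(A)$ inside $\W_S\Omega^1_A$ coincide. Concretely, any $\omega\in \W_S\Omega^1_R$ can be written as $\sum_i r_i\omega'_i$ with $r_i\in \W_S(R)$ and $\omega'_i$ in the image of $\W_S\Omega^1_{R'}$; multiplying by an arbitrary $\eta\in \W_S\Omega^{q-1}_A$ gives $\omega\cdot\eta=\sum_i \omega'_i\cdot(r_i\eta)\in I'$, which gives the desired inclusion $I\subset I'$.

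There is no serious obstacle here: the whole argument reduces to an application of étale base change for the absolute de Rham-Witt complex (\cite[Theorem~C]{H}). The only minor subtlety is to remember that we need equality of the \emph{images} inside $\W_S\Omega^1_A$, not equality of the abstract tensor products; but this is automatic once the intermediate map $\W_S\Omega^1_{R'}\otimes_{\W_S(R')}\W_S(A)\to \W_S\Omega^1_R\otimes_{\W_S(R)}\W_S(A)$ is known to be an isomorphism.
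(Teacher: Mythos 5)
Your proposal is correct and takes essentially the same approach as the paper: both reduce to finite $S$ and apply étale base change for the (absolute) de Rham--Witt complex to the étale map $R'\to R$, giving $\W_S\Omega^1_{R'}\otimes_{\W_S(R')}\W_S(A)\cong\W_S\Omega^1_R\otimes_{\W_S(R)}\W_S(A)$. You simply spell out more explicitly than the paper why this isomorphism forces the two ideals $I'$ and $I$ inside $\W_S\Omega^*_A$ to coincide.
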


\subsubsection{}
For every truncation set $S$ we have a functor
$$
\W_S:\text{(Schemes)}\xr{} \text{(Schemes)}, \quad X\mapsto \W_S(X).
$$
This functor has been studied by Borger \cite{Borger4}, our notation differs slightly: the notation is $W^*$ in \cite{Borger4}. 

For an affine scheme $U=\Spec(A)$, we have $\W_S(U)=\Spec(\W_S(A))$.
If $X$ is separated and $(U_i)_{i\in I}$ is an affine covering of $X$, then $\W_S(X)$ is obtained by gluing $\W_S(U_i)$  
along $\W_S(U_i\times_X U_j)$. In particular, $(\W_S(U_i))_{i\in I}$ is an affine covering of $\W_S(X)$.
The functor is extended to non-separated schemes in the usual way. 

If $T\subset S$ is an inclusion of finite truncation sets then 
$$\imath_{T,S}:\W_T(X)\xr{} \W_S(X)$$ 
is a closed immersion and functorial in $X$.

\subsubsection{}
If $X$ is an $R$-scheme then we can glue in the same way a quasi-coherent sheaf $\W_S\Omega^q_{X/R}$. Indeed, let us suppose that $X$ is separated. Let
$(\Spec(A_i))_{i\in I}$ be an affine covering and set $\Spec(A_{ij})=\Spec(A_i)\times_{X}\Spec(A_j)$. For every $i$, the $\W_S(A_i)$-module
$\W_S\Omega^q_{A_i/R}$ defines a quasi-coherent sheaf $\W_S\Omega^q_{\Spec(A_i)/R}$ on $\W_S(\Spec(A_i))$. Since
$$
\Gamma(\W_S(\Spec(A_{ij}))),\W_S\Omega^q_{\Spec(A_i)/R}) = \W_S\Omega^q_{A_i/R}\otimes_{\W_S(A_i)}\W_S(A_{ij}) = \W_S\Omega^q_{A_{ij}/R},
$$  
by using  (\ref{equation-relative-big-etale-base-change}), we can glue to a quasi-coherent sheaf $\W_S\Omega_{X/R}$ on $\W_S(X)$. 
Independence of the covering and $\jmath^*\W_S\Omega^q_{X/R}=\W_S\Omega^q_{U/R}$, for every open $\jmath:U\xr{} X$, can be checked. 

\subsubsection{}
\label{subsubsection-coherent}
If $\W_S(X)\xr{} \W_S(\Spec(R))$ is of finite type and $\W_S(X)$ is noetherian, then $\W_S\Omega^q_{X/R}$ is coherent. Indeed, 
we have a surjective morphism $\Omega^j_{\W_S(X)/\W_S(R)}\xr{} \W_S\Omega^j_{X/R}$ and the assumptions imply that $\Omega^j_{\W_S(X)/\W_S(R)}$ is coherent. 

\subsubsection{}
If $f:X\xr{} Y$ is a morphism of $R$-schemes then we get $$\W_S\Omega^q_{Y/R}\xr{} \W_S(f)_*\W_S\Omega^q_{X/R}.$$ 
For an inclusion of truncation  sets $T\subset S$, we obtain
$$
\W_S \Omega^q_{X/R}\xr{} \imath_{T,S*}\W_T\Omega^q_{X/R}.
$$
The following diagram is commutative: 
$$
\xymatrix
{
\W_S\Omega^q_{Y/R} \ar[rr]\ar[d]
&
&
\W_S(f)_*\W_S\Omega^q_{X/R} \ar[d]
\\
\imath_{T,S*}\W_T\Omega^q_{Y/R} \ar[r]
&
\imath_{T,S*}\W_T(f)_*\W_T\Omega^q_{X/R} \ar[r]^{=}
&
\W_S(f)_*\imath_{T,S*}\W_T\Omega^q_{X/R}.
}
$$
The differential, the Frobenius and the Verschiebung operations are defined in the evident way:
\begin{align*}
&d:\W_{S}\Omega^q_{X/R} \xr{} \W_{S}\Omega^{q+1}_{X/R},\\
&F_n: \W_{S}\Omega^q_{X/R} \xr{} \imath_{S/n,S*}\W_{S/n}\Omega^q_{X/R}, \\
&V_n: \imath_{S/n,S*}\W_{S/n}\Omega^q_{X/R} \xr{} \W_{S}\Omega^q_{X/R}.
\end{align*}

\begin{definition}\label{definition-dRW-coh-new}
  Let $X$ be an $R$-scheme, let $S$ be a finite truncation set. We define 
$$
H^i_{dRW}(X/\W_S(R)):=H^i(\W_S(X),\W_S\Omega^*_{X/R}),
$$
where the right hand side is the hypercohomology for the Zariski topology.
\end{definition}

\subsubsection{}\label{section-phi-beta-de-Rham-Witt}
Note that $F_n$ and $V_n$ are not morphisms of complexes. 
For all positive integers $n$ and all finite truncation sets  we set 
\begin{equation}\label{equation-definition-phin}
\phi_n=n^qF_n:\W_S\Omega^q_{X/R}\xr{}   \imath_{S/n,S*}\W_{S/n}\Omega^q_{X/R},  
\end{equation}
to get a morphism of complexes 
$$
\W_S\Omega^*_{X/R}\xr{\phi_n}  \imath_{S/n,S*}\W_{S/n}\Omega^*_{X/R}.
$$
Suppose that $X$ is smooth over $R$ of relative dimension $d$. Then we set
$$
\beta_n=n^{d-q}V_n:\imath_{S/n,S*}\W_{S/n}\Omega^q_{X/R}\xr{}  \W_{S}\Omega^q_{X/R}
$$
(we will prove $\W_S\Omega^q_{X/R}=0$ if $q>d$ in Proposition \ref{proposition-WSOmega-torsionfree}(ii)). We obtain a morphism of complexes 
$$
\beta_n:\imath_{S/n,S*}\W_{S/n}\Omega^*_{X/R}\xr{}  \W_{S}\Omega^*_{X/R},
$$
satisfying the equalities:
\begin{align*}
&\phi_n\circ \beta_n=n^{d+1}, \\
&\beta_n(\lambda \cdot \phi_n(x))=n^{d}V_n(\lambda)\cdot x \quad \text{for all $x\in \W_S\Omega^*_{X/R}$ and $\lambda\in \imath_{S/n,S*}\W_{S/n}\Omega^*_{X/R}$.}  
\end{align*}
In Section \ref{section-values} we will study the $\{\phi_n\}_{n\geq 1}$ operations induced on the de Rham-Witt cohomology.

\subsubsection{}\label{subsubsection-can-be-computed-by-Cech-covering}
Note that the Hodge to de Rham spectral sequence and the quasi-coherence of $\W_S\Omega^q_{X/R}$ imply the following fact. 
Assume $X$ is separated and $\W_S(X)$ is a noetherian scheme. 
Let $(U_i)$ be an open affine covering for $X$, we denote by $\mathfrak{U}=(\W_S(U_i))$ the induced covering of $\W_S(X)$. Then we can compute
$H^i_{dRW}(X/\W_S(R))$ by using the \v{C}ech complex for $\mathfrak{U}$:
$$
H^i(C(\mathfrak{U},\W_S\Omega^*_{X/R}))\xr{\cong} H^i_{dRW}(X/\W_S(R)).
$$
In the derived category we have a quasi-isomorphism:
$$
C(\mathfrak{U},\W_S\Omega^*_{X/R})\xr{\text{q-iso}} R\Gamma(\W_S\Omega^*_{X/R}).
$$

\begin{proposition}\label{proposition-WSOmega-torsionfree}
Let $R$ be a flat $\Z$-algebra. Let $X$ be a smooth $R$-scheme. Let $S$ be a finite truncation set. 
\begin{itemize}
\item[(i)] For all non-negative integers $q$, $\W_S\Omega^q_{X/R}$ is $\Z$-torsion-free, that is, 
multiplication by a non-zero integer is injective. 
\item[(ii)] Let $d$ be the relative dimension of $X/R$. Then $\W_S\Omega^q_{X/R}=0$ for all $q>d$.
\end{itemize}
\begin{proof}
  For (i) it suffices to prove that $\W_S\Omega^q_{X/R}\otimes \Z_{(p)}=\W_S\Omega^q_{X'/R'}$ is $p$-torsion-free for all primes $p$,
 where $X'=X\otimes_{\Z} \Z_{(p)}$  and $R'=R\otimes_{\Z} \Z_{(p)}$. For (ii) it suffices to show that $\W_S\Omega^q_{X'/R'}$ vanishes.

Via the decomposition \ref{equation-epsilon-decomposion-deRham-Witt} we may suppose that $S=\{1,p,\dots,p^{n-1}\}$. 
Certainly we may assume that $X'=\Spec(B)$ and that there exists an \'etale ring homomorphism $R'[x_1,\dots,x_d]\xr{} B$. By using 
\eqref{equation-relative-big-etale-base-change} we are reduced to the case 
$B=R'[x_1,\dots,x_d]$. The claim follows in this case from the explicit description of the de Rham-Witt complex 
in \cite[\textsection2]{LZ}, more precisely \cite[Proposition~2.17]{LZ}. 
\end{proof}
\end{proposition}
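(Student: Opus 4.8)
The plan is to reduce everything, via base change along $\Z\to\Z_{(p)}$, to a single explicit case over a polynomial ring, where the answer is already known from Langer--Zink. First I would localize at a prime $p$: writing $X'=X\otimes_\Z\Z_{(p)}$ and $R'=R\otimes_\Z\Z_{(p)}$, Corollary~\ref{corollary-de-Rham-Witt-complex-localization} identifies $\W_S\Omega^q_{X/R}\otimes_\Z\Z_{(p)}$ (for $S$ finite) with $\W_S\Omega^q_{X'/R'}$. For part~(i) it is then enough to prove that each $\W_S\Omega^q_{X'/R'}$ is $p$-torsion-free, for every prime $p$; for part~(ii) it is enough to prove it vanishes for $q>d$. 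Note here that $R'$ is again $\Z$-flat (localization), so $\W_S(R')$ behaves well, and $X'/R'$ is still smooth of the same relative dimension.

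Next I would peel off the non-$p$-typical part using the $\epsilon$-decomposition of Section~\ref{section-epsilon-decomposition}. By Proposition~\ref{proposition-epsilon-decomposion-deRham-Witt}(1), under the equivalence \eqref{equation-epsilon-decomposition} the module $\W_S\Omega^*_{X'/R'}$ corresponds to $\bigoplus_{n\ge 1,(n,p)=1}\W_{(S/n)_p}\Omega^*_{X'/R'}$, and each $(S/n)_p$ is a $p$-typical truncation set, i.e.\ of the form $\{1,p,\dots,p^{m-1}\}$. Torsion-freeness (resp.\ vanishing) of a module can be checked factor-by-factor under such an equivalence of categories, so we are reduced to the case $S=\{1,p,\dots,p^{n-1}\}$, which by Notation~\ref{notation-Wn} is $W_n$, and by Proposition~\ref{proposition-comparison-Langer-Zink} the complex $\W_S\Omega^*_{X'/R'}$ is exactly the Langer--Zink relative de Rham--Witt complex $W_n\Omega^*_{X'/R'}$.

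Now I would use that both assertions are local and stable under \'etale base change: choosing an affine open $\Spec(B)\subset X'$ together with an \'etale map $R'[x_1,\dots,x_d]\to B$, the étale base-change isomorphism \eqref{equation-relative-big-etale-base-change}, namely $\W_S(B)\otimes_{\W_S(R'[x])}\W_S\Omega^q_{R'[x]/R'}\xrightarrow{\cong}\W_S\Omega^q_{B/R'}$, lets me replace $B$ by the polynomial ring $R'[x_1,\dots,x_d]$ (flatness of an \'etale map preserves $p$-torsion-freeness; and it preserves vanishing on the nose). For $B=R'[x_1,\dots,x_d]$ the relative de Rham--Witt complex is computed completely explicitly in \cite[\textsection2]{LZ}: by \cite[Proposition~2.17]{LZ} it is a direct sum of free $W_n(R')$-modules indexed by certain "weighted" monomials, concentrated in degrees $0,\dots,d$. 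Since $R'$ is $\Z$-flat, each $W_n(R')=\W_S(R')$ is $p$-torsion-free (the ghost map embeds it into a product of copies of $R'$), hence so is any free module over it; and the explicit description gives vanishing in degrees $>d$. This finishes both parts.

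The only genuine work is bookkeeping: verifying that "$p$-torsion-free" and "zero" really do descend through the chain of reductions (localization, the $\epsilon$-product decomposition, \'etale base change), and quoting the Langer--Zink structure result \cite[Proposition~2.17]{LZ} in the right form. The main obstacle, such as it is, is making sure the \'etale base-change step is legitimate for $p$-torsion-freeness — this is where one uses that an \'etale, hence flat, ring map $\W_S(R'[x])\to\W_S(B)$ (Theorem~\ref{thm-van-der-Kallen-Borger}(1)) carries a $p$-torsion-free module to a $p$-torsion-free module after the tensor product; there is no real difficulty, but it is the step that would need a careful sentence rather than a one-word citation.
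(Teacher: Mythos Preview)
Your proposal is correct and follows essentially the same route as the paper's proof: localize at a prime, use the $\epsilon$-decomposition to reduce to a $p$-typical truncation set, pass \'etale-locally to a polynomial ring via \eqref{equation-relative-big-etale-base-change}, and invoke \cite[Proposition~2.17]{LZ}. One small caveat: the Langer--Zink description does not quite give a direct sum of \emph{free} $W_n(R')$-modules but rather of various $W_m(R')$ for $m\le n$; this is harmless here, since each such piece is still $p$-torsion-free (via the ghost map, as you note) and the description is still concentrated in degrees $\le d$.
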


\subsection{Finiteness}

\begin{proposition}\label{proposition-finiteness}
Let $R$ be a flat and finitely generated $\Z$-algebra. Let $X$ be a flat and proper scheme of relative dimension $d$ over $R$. 
Let $S$ be a finite truncation set. The following hold.
\begin{itemize}
\item [(i)] For all non-negative integers $i,j$ the cohomology group
$
H^i(\W_S(X),\W_S\Omega^j_{X/R})
$
is a finitely generated $\W_S(R)$-module. 
\item [(ii)] For all $i> d$ and $j\geq 0$, we have $H^i(\W_S(X),\W_S\Omega^j_{X/R})=0$.
\item [(iii)] For all $i$, the de Rham-Witt cohomology $H^i_{dRW}(X/\W_S(R))$ (Definition ~\ref{definition-dRW-coh-new}) is a finitely generated 
$\W_S(R)$-module. 
\item [(iv)] Suppose $X/R$ is smooth. Then $H^i_{dRW}(X/\W_S(R))=0$ for all $i>2d$.
\end{itemize}
\begin{proof}
For (i).  We denote by $f:X\xr{} \Spec(R)$ the structure morphism. The scheme $\W_S(X)$
is noetherian, because it is of finite type over $\Spec(\Z)$. 
By \cite[Proposition~16.13]{Borger4} the induced morphism $\W_S(f):\W_S(X)\xr{}\W_S(R)$ is proper. 
Moreover, $\W_S\Omega^j_{X/R}$ defines a coherent sheaf on $\W_S(X)$ (see \ref{subsubsection-coherent}). 

For (ii). The fibers of $\W_S(f)$ at closed points of $\Spec(\W_S(R))$ have 
dimension $d$. In fact, as topological spaces they are disjoint unions
of the corresponding fibers of $f$. This implies the claim.

For (iii). Follows from (i) via the Hodge to de Rham spectral sequence.

For (iv). Again this follows from the Hodge to de Rham spectral sequence, statement (ii), and Proposition \ref{proposition-WSOmega-torsionfree}(ii). 
\end{proof}
\end{proposition}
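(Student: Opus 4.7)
The plan is to deduce all four statements from standard coherent-cohomology machinery, the only substantial inputs being structural facts about Witt-vector schemes set up earlier. Concretely, I will use that (a) $\W_S(X)\to\W_S(\Spec R)$ is a proper morphism of noetherian schemes, (b) $\W_S\Omega^j_{X/R}$ is coherent on $\W_S(X)$, and (c) the fibers of $\W_S(f)$ over closed points have dimension at most $d$.

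For (i), I would first verify that $\W_S(X)$ is noetherian. Since $R$ is a finitely generated $\Z$-algebra, $\W_S(R)$ is a finitely generated $\Z$-algebra (this is elementary from the Teichm\"uller/Verschiebung presentation of $\W_S$ for finite $S$). To see that $\W_S(X)$ is of finite type over $\W_S(R)$, I pick an affine cover of $X$ by schemes \'etale over affine space $\Spec R[x_1,\dots,x_n]$, handle the polynomial case by hand, and base-change along $\W_S(R[x_1,\dots,x_n])\to \W_S(B)$ using Corollary~\ref{corollary-van-der-Kallen-Borger}(i). Properness of $\W_S(f)$ is the nontrivial input, cited from Borger~\cite{Borger4}. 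Coherence of $\W_S\Omega^j_{X/R}$ is noted in \ref{subsubsection-coherent}. Grothendieck's finiteness theorem for proper direct images then gives finite generation of $H^i(\W_S(X),\W_S\Omega^j_{X/R})$ as a $\W_S(R)$-module.

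For (ii), I need a cohomological-dimension bound. Since $\W_S(f)$ is proper and the sheaves involved are coherent, by the theorem on formal functions (or by the standard bound via fiber dimension) it suffices to show that every fiber of $\W_S(f)$ over a closed point of $\W_S(\Spec R)$ has topological dimension at most $d$. By Lemma~\ref{lemma-points-of-WSR}, any such closed point factors through $R\to k$ for some residue field $k=R/\mf{p}$ at a maximal ideal $\mf{p}$, and the fiber is contained in $\W_S(X_k)$. I then reduce to showing that $\W_S(X_k)$ has the same underlying topological space as a finite disjoint union of copies of $X_k$. In characteristic $0$ this follows directly from the ghost-map isomorphism $\W_S(k)\xr{\cong}\prod_{s\in S}k$. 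In characteristic $p>0$, the $\epsilon$-decomposition of \ref{section-epsilon-decomposition} reduces the claim to $S$ being $p$-typical, and then the identity $V_p(a)^2=pV_p(a^2)$ together with $p=0$ in $k$ forces the kernel of $W_n(A)\to A$ (for any $k$-algebra $A$) to be a nil-ideal, so $W_n(\Spec A)\to \Spec A$ is a homeomorphism. In either case the fiber dimension equals $d$, yielding (ii).

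For (iii) and (iv), I would invoke the Hodge-to-de Rham spectral sequence
\[
E_1^{j,i}=H^i(\W_S(X),\W_S\Omega^j_{X/R})\;\Longrightarrow\; H^{i+j}_{dRW}(X/\W_S(R)).
\]
Part (iii) is immediate from (i) combined with noetherianness of $\W_S(R)$. For (iv), part (ii) kills $E_1^{j,i}$ for $i>d$, and Proposition~\ref{proposition-WSOmega-torsionfree}(ii) kills it for $j>d$, so the abutment vanishes above total degree $2d$. The main obstacle throughout is clearly (ii): it is the one place where the structural theory of Witt-vector schemes (points, the $\epsilon$-decomposition, nilpotence of $V$-generated ideals in characteristic $p$) must be stitched together to reduce a question about $\W_S(X)$ to one about $X$ itself. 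The remaining parts are then routine once properness, coherence, and noetherianness are in place.
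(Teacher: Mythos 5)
Your proposal is correct and follows the same overall strategy as the paper: noetherianness plus Borger's properness plus coherence of $\W_S\Omega^j_{X/R}$ gives (i); a fiber-dimension bound obtained by reducing $\W_S(X_k)$ topologically to copies of $X_k$ gives (ii); and the Hodge-to-de Rham spectral sequence combined with (i), (ii), and Proposition~\ref{proposition-WSOmega-torsionfree}(ii) gives (iii) and (iv). The only differences are that you spell out the finite-type and fiber-dimension arguments (via the $\epsilon$-decomposition, nilpotence of $V_p$-generated ideals in characteristic $p$, and the ghost map) in more detail than the paper, which simply asserts these facts with references; in particular your characteristic-$0$ case is vacuous here since $\W_S(R)$ is Jacobson with finite residue fields at closed points.
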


%% \begin{remark} \label{remark-Cech-cohomology}
%%   The coherence of $\W_S\Omega^j_{X/R}$ implies that we may compute the cohomology of $\W_S\Omega^*_{X/R}$ via the \v{C}ech complex for an open 
%% affine covering if $X$ is separated. 
%% \end{remark}

%% Via the Hodge-to-de-Rham spectral sequence we obtain the following corollary.

%% \begin{corollary}\label{corollary-finiteness-big-de-Rham-Witt}
%%   With the assumptions of Proposition \ref{proposition-finiteness}, the cohomology groups
%% $
%% H^i(X,\W_S\Omega^*_{X/R})
%% $
%% are finitely generated $\W_S(R)$-modules.
%% \end{corollary}

%\section{Cohomology of relative big de Rham-Witt complexes}
\section{De Rham-Witt cohomology}

\subsection{Reduction modulo an ideal}

\subsubsection{}
Recall that $W_n=\W_{\{1,p,\dots,p^{n-1}\}}$ whenever a prime $p$ has been fixed (Notation~\ref{notation-Wn}).
The goal of this section is to prove the following theorem.

\begin{thm}\label{thm-comparison-isom}
  Let $R$ be a flat $\Z_{(p)}$-algebra, let $B$ be a smooth $R$-algebra, and let $n$ be positive integers. 
Let $I\subset R$ be an ideal such that $p^m\in I$ for some $m$. 
Choose a $W_n(R)$-free resolution 
$$
T:=\dots \xr{} T^{-2}\xr{} T^{-1}\xr{} T^{0}
$$ 
of $W_n(R/I)$.
There exists a functorial quasi-isomorphism of complexes of $W_n(R)$-modules
\begin{equation}\label{equation-comp-isomorphism-with-T}
W_n\Omega^*_{B/R}\otimes_{W_n(R)} T\xr{} W_n\Omega^*_{(B/IB)/(R/I)}.  
\end{equation}
In particular, we obtain an isomorphism
\begin{equation}\label{equation-comp-isomorphism}
W_n\Omega^*_{B/R}\otimes^{\mathbb{L}}_{W_n(R)} W_n(R/I)\xr{\cong} W_n\Omega^*_{(B/IB)/(R/I)},  
\end{equation}
in the derived category of $W_n(R)$-modules.
\end{thm}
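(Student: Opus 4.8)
The plan is to reduce to the explicit polynomial case $B = R[x_1,\dots,x_d]$ and then to exhibit the quasi-isomorphism \eqref{equation-comp-isomorphism-with-T} by a direct computation with the Langer–Zink description of the de Rham–Witt complex. First I would use the \'etale base change isomorphism \eqref{equation-relative-big-etale-base-change}: since $B$ is smooth over $R$, Zariski-locally there is an \'etale map $R[x_1,\dots,x_d]\to B$, and both sides of \eqref{equation-comp-isomorphism-with-T} commute with such base change. For the left side this uses \eqref{equation-relative-big-etale-base-change} together with part (ii) of Corollary~\ref{corollary-van-der-Kallen-Borger}, which identifies $W_n(R/I)\otimes_{W_n(R)}W_n(B) \cong W_n((B/IB))$ (taking $C = R/I$); for the right side it uses \eqref{equation-relative-big-etale-base-change} over the base $R/I$. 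Functoriality in $B$ lets one glue these local statements, so it suffices to treat $B = R[x_1,\dots,x_d]$, and in fact, by a K\"unneth-type argument on the factors, the case $d=1$, $B = R[x]$. Here the point of the hypothesis $p^m\in I$ is that $W_n(R/I)$ is an honest $W_n(R)$-module supported over the ``$p$-adic'' part, so the $W_n$-formalism of Langer–Zink applies to $R/I$.

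Next I would write down both sides explicitly. By \cite[Proposition~2.17]{LZ} (already invoked in the proof of Proposition~\ref{proposition-WSOmega-torsionfree}), $W_n\Omega^*_{R[x]/R}$ has a completely explicit basic-Witt-differential decomposition as a direct sum of $W_n(R)$-modules (respectively truncated $W_{n'}(R)$-modules, $n' \le n$, coming from the $V$- and $dV$-terms indexed by weights), and similarly $W_n\Omega^*_{(R/I)[x]/(R/I)}$ decomposes over $W_n(R/I)$. The proposed map \eqref{equation-comp-isomorphism-with-T} is then built summand by summand from the natural reduction maps $W_{n'}(R)\to W_{n'}(R/I)$; concretely, choosing the free resolution $T$ of $W_n(R/I)$, one checks that $W_{n'}(R)\otimes^{\mathbb L}_{W_n(R)}W_n(R/I) \simeq W_{n'}(R/I)$, i.e.\ that $\mathrm{Tor}^{W_n(R)}_{>0}(W_{n'}(R),W_n(R/I)) = 0$. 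This last vanishing is where flatness of $R$ over $\Z_{(p)}$ enters: it guarantees that the relevant short exact sequences \eqref{equation-short-exact-seq-S/n-S-T} for the $W_{n'}$'s remain exact after $\otimes W_n(R/I)$, via the ghost-map description, so each truncated piece of the de Rham–Witt complex is ``$W_n(R/I)$-flat enough'' for the derived tensor product to collapse. Assembling these summand-wise identifications, and checking that the differentials (which mix the summands via $d$, $F$, $V$) are respected, yields the quasi-isomorphism \eqref{equation-comp-isomorphism-with-T}; passing to the derived category gives \eqref{equation-comp-isomorphism}.

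I expect the main obstacle to be the bookkeeping in the second step: verifying that the explicit summand-wise map is compatible with \emph{all} of the structure — the differential $d$, and implicitly the behavior under the weight decomposition — rather than just being an isomorphism of graded modules. The de Rham–Witt differential on $R[x]$ sends a basic differential of weight $k$ to a combination of basic differentials, with coefficients involving integers and the Verschiebung, and one must check these coefficients are unchanged under reduction modulo $I$ and interact correctly with the chosen resolution $T$. A clean way to organize this is to avoid picking $T$ until the very end: first prove directly that for each $n' \le n$ the reduction map $W_{n'}\Omega^*_{R[x]/R} \to W_{n'}\Omega^*_{(R/I)[x]/(R/I)}$ exhibits the target as $W_{n'}\Omega^*_{R[x]/R}\otimes^{\mathbb L}_{W_n(R)} W_n(R/I)$ — a statement intrinsic to the complexes — and only afterwards replace $\otimes^{\mathbb L}$ by $\otimes T$, which is automatic once the Tor-vanishing is known. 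Functoriality of the whole construction in $B$ (needed for the gluing in step one) then follows because every map in sight is the canonical reduction map.
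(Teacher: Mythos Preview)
Your overall two-step plan matches the paper's, but the execution of both steps has a genuine gap, and the gaps share a common cause: the de Rham--Witt complex does \emph{not} behave well under $-\otimes^{\mathbb L}_{W_n(R)}W_n(R/I)$ termwise. Your central claim, that $\mathrm{Tor}^{W_n(R)}_{>0}(W_{n'}(R),W_n(R/I))=0$, is false. Take $R=\Z_{(p)}$, $I=(p)$, $n=2$, $n'=1$: the periodic free resolution $\cdots\xrightarrow{\,p-V_p(1)\,}W_2(\Z_{(p)})\xrightarrow{\,V_p(1)\,}W_2(\Z_{(p)})\to\Z_{(p)}\to 0$ gives $\mathrm{Tor}_1=\Z/p\neq 0$ after tensoring with $W_2(\F_p)=\Z/p^2$, since $V_p(1)\mapsto p$ and $p-V_p(1)\mapsto 0$ there. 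The ghost-map heuristic you invoke cannot help, because $R/I$ has $p$-torsion. Even the underived degree-wise comparison fails: the surjection $W_n(R[x])\otimes_{W_n(R)}W_n(R/I)\to W_n((R/I)[x])$ has nonzero kernel (e.g.\ the class of $V_p(px)$ when $n=2$). This also undermines your \'etale reduction as written: \eqref{equation-relative-big-etale-base-change} is an isomorphism of \emph{modules} in each degree $q$, not of complexes --- the de Rham--Witt differential is not $W_n(A)$-linear --- so transporting the statement from $A=R[x_1,\dots,x_d]$ to an \'etale $B$ by a degree-wise spectral sequence would again need the false termwise statement for $A$. (Separately, your appeal to Corollary~\ref{corollary-van-der-Kallen-Borger}(ii) with ``$C=R/I$'' would require $R\to B$ \'etale, which it is not; one must work along $A\to B$.)

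The paper avoids both problems. For the polynomial ring it invokes the Langer--Zink quasi-isomorphism $\Omega^*_{W_n(R)[x_1,\dots,x_d]/W_n(R)}\xrightarrow{\sim}W_n\Omega^*_{R[x_1,\dots,x_d]/R}$: the source is termwise \emph{free} over $W_n(R)$, so $\otimes^{\mathbb L}=\otimes$ and the claim follows at once from the same comparison over $R/I$. For the \'etale step it first passes modulo $p^{nm}$ (harmless, since $W_n(R/I)$ is $p^{nm}$-torsion and $W_n\Omega^*_{B/R}$ is $p$-torsion-free by Proposition~\ref{proposition-WSOmega-torsionfree}) and then exploits $dF_p^{nm}=p^{nm}F_p^{nm}d\equiv 0$: this makes the $W_{c}(A)$-action via $F_p^{nm}$ (with $c=nm+n$) commute with $d$, so one obtains $(W_n\Omega^*_{B/R}/p^{nm},d)\cong W_c(B)/p^{nm}\otimes_{W_c(A)/p^{nm}}(W_n\Omega^*_{A/R}/p^{nm},d)$ \emph{as complexes}, and flat base change along the \'etale map $W_c(A)\to W_c(B)$ finishes the reduction.
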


More precisely, functoriality means  that for any morphism $A\xr{} B$ of smooth $R$-algebras, the diagram 
$$
\xymatrix
{
W_n\Omega^*_{B/R}\otimes_{W_n(R)} T \ar[r]
&
W_n\Omega^*_{(B/IB)/(R/I)} 
\\
W_n\Omega^*_{A/R}\otimes_{W_n(R)} T \ar[r]\ar[u]
&
W_n\Omega^*_{(A/IA)/(R/I)} \ar[u]
}
$$  
is commutative. 

\begin{remark}
The proof of Theorem \ref{thm-comparison-isom} does not go
beyond the methods of \cite{LZ}, so that the theorem may be well-known but
we couldn't provide a reference.% to \cite{LZ}.  
\end{remark}

\begin{proof}[Proof of Theorem \ref{thm-comparison-isom}]
  We define the morphism (\ref{equation-comp-isomorphism-with-T}) by 
$$
W_n\Omega^*_{B/R}\otimes_{W_n(R)} T \xr{} W_n\Omega^*_{B/R}\otimes_{W_n(R)} W_n(R/I) \xr{} W_n\Omega^*_{(B/I)/(R/I)},
$$
so that the functoriality of (\ref{equation-comp-isomorphism-with-T}) is obvious.

\emph{1.Step:} The first step is the reduction to $B=R[x_1,\dots,x_d]$. 
We can use the \v{C}ech complex (see \ref{subsubsection-can-be-computed-by-Cech-covering}) 
in order to reduce to the case where there exists an \'etale morphism $A=R[x_1,\dots,x_d]\xr{} B$.

Note that $p^{nm}=0$ in $W_n(R/I)$.
Since $W_n\Omega^*_{B/R}$ is $p$-torsion-free (Proposition ~\ref{proposition-WSOmega-torsionfree}), we see that 
\begin{equation}\label{equation-mod-pnm} 
W_n\Omega^*_{B/R}\otimes^{\mathbb{L}}_{W_n(R)} W_n(R/I)\xr{} 
W_n\Omega^*_{B/R}/p^{nm}\otimes^{\mathbb{L}}_{W_n(R)/p^{nm}} W_n(R/I) 
\end{equation}
is a quasi-isomorphism. Clearly, morphism (\ref{equation-comp-isomorphism})
factors through (\ref{equation-mod-pnm}). It will be easier to work modulo $p^{nm}$,
because $dF^{nm}_{p}=p^{nm}F^{nm}_pd$ vanishes modulo $p^{nm}$.

Set $c=nm+n$, we claim that 
\begin{align}\label{align-from-A-to-B-modulo-pnm}
\left(W_{c}(B)/p^{nm}\otimes_{W_{c}(A)/p^{nm}} W_n\Omega^*_{A/R}/p^{nm},
id\otimes d\right)  &\xr{} (W_n\Omega^*_{B/R}/p^{nm},d) \\
b\otimes \omega &\mapsto F_p^{nm}(b)\cdot \omega, \nonumber
\end{align}
is an isomorphism of complexes. Note that $W_c(A)$ acts on $W_n\Omega^*_{A/\Z}/p^{nm}$ via $W_c(A)\xr{F^{nm}_p} W_n(A)$, and therefore (\ref{align-from-A-to-B-modulo-pnm})
is a morphism of complexes. 
Theorem \ref{thm-van-der-Kallen-Borger} implies 
that 
$$
W_{c}(B)\otimes_{W_c(A)} M\xr{\cong} W_{n}(B)\otimes_{W_n(A)} M, \quad b\otimes m\mapsto F^{nm}_p(b)\otimes m,
$$
is an isomorphism for all $W_n(A)$-modules $M$.  Thus the claim follows from (\ref{equation-relative-big-etale-base-change}).

On the other hand, Corollary \ref{corollary-van-der-Kallen-Borger} shows that for every $W_n(A/I)$-module $M$ the map 
$$
W_{c}(B)/p^{nm}\otimes_{W_{c}(A)/p^{nm}}M \xr{} W_n(B/I)\otimes_{W_n(A/I)} M, \quad b\otimes m\mapsto F_{p}^{nm}(b)\otimes m,
$$
is an isomorphism. This yields an isomorphism of complexes 
$$
\left(W_{c}(B)/p^{nm}\otimes_{W_{c}(A)/p^{nm}} W_n\Omega^*_{(A/IA)/(R/I)},
id\otimes d\right)  \xr{} (W_n\Omega^*_{(B/IB)/(R/I)},d).
$$
Finally, since $W_{c}(B)/p^{nm}$ is \'etale over $W_{c}(A)/p^{nm}$, we are reduced to proving that 
$$
W_n\Omega^*_{A/R}/p^{nm}\otimes^{\mathbb{L}}_{W_n(R)/p^{nm}} W_n(R/I)
\xr{} W_n\Omega^*_{(A/IA)/(R/I)}
$$
is a quasi-isomorphism. 

\emph{2.Step:} Proof of the case $B=R[x_1,\dots,x_d]$.
In this case it follows from \cite[\textsection2]{LZ} and the proof of \cite[Theorem~3.5]{LZ} that 
$$
\Omega^{*}_{W_n(R)[x_1,\dots,x_d]/W_n(R)}\xr{} \Omega^{*}_{W_n(B)/W_n(R)} \xr{\pi} W_n\Omega^*_{B/R}
$$
is a quasi-isomorphism, where the first morphism is induced by $x_i\mapsto [x_i]$. The same statement holds for $R/I$, hence
the assertion follows from the quasi-isomorphism 
$$
\Omega^{*}_{W_n(R)[x_1,\dots,x_d]/W_n(R)} \otimes^{\mathbb{L}}_{W_n(R)} W_n(R/I)\xr{} \Omega^{*}_{W_n(R/I)[x_1,\dots,x_d]/W_n(R/I)}.
$$ 
\end{proof}

\begin{corollary}\label{corollary-comparison-thm-RGamma}
Let $R$ be a flat and finitely generated $\Z$-algebra, and let $\mf{m}\subset R$ be a maximal ideal; set $p={\rm char}(R/\mf{m})$. 
Let $X$ be a smooth and proper $R$-scheme, let $n,j$ be positive integers. 
There is a natural quasi-isomorphism of complexes of $W_n(R)$-modules:
$$
R\Gamma(W_n\Omega^*_{X/R}) \otimes^{\mathbb{L}}_{W_n(R)} W_n(R/\mf{m}^j) \xr{} R\Gamma(W_n\Omega^*_{X\otimes_{R} R/\mf{m}^j / (R/\mf{m}^j)}).
$$
\begin{proof}
The claim follows from Theorem \ref{thm-comparison-isom} by using \v{C}ech complexes (see \ref{subsubsection-can-be-computed-by-Cech-covering}).
%% In view of Theorem \ref{thm-comparison-isom} the claim  follows from Proposition \ref{proposition-cech-cohomology}(iii) and the analogous statement for the 
%% relative  de Rham-Witt complex (over $R/p^m$) of Langer-Zink \cite[Section~1.4]{LZ}.
\end{proof}
\end{corollary}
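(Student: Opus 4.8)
The plan is to deduce the corollary by globalizing Theorem~\ref{thm-comparison-isom} with a \v{C}ech computation, after a preliminary reduction that puts us in the exact setting of that theorem.

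\emph{First step: reduction to a $\Z_{(p)}$-base.} In the corollary $R$ is merely a flat $\Z$-algebra, whereas Theorem~\ref{thm-comparison-isom} wants a $\Z_{(p)}$-algebra, so I would first pass to $R':=R\otimes_{\Z}\Z_{(p)}$, $X':=X\otimes_{\Z}\Z_{(p)}$, and the ideal $\mf{m}'\subset R'$ generated by $\mf{m}$. Because $p\in\mf{m}$ and $R/\mf{m}$ already has characteristic $p$, the Artinian local ring $R/\mf{m}^j$ is a $\Z_{(p)}$-algebra, $\mf{m}'$ is maximal in $R'$ with $R'/{\mf{m}'}^j=R/\mf{m}^j$, and $X'\otimes_{R'}R'/{\mf{m}'}^j=X\otimes_R R/\mf{m}^j$. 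Since $W_n(R/\mf{m}^j)$ is then a module over $W_n(R)\otimes_{\Z}\Z_{(p)}=W_n(R')$, $W_n(R')$ is flat over $W_n(R)$, and $W_n\Omega^*_{X/R}\otimes_{\Z}\Z_{(p)}=W_n\Omega^*_{X'/R'}$ (Corollary~\ref{corollary-de-Rham-Witt-complex-localization} and the localization isomorphism for Witt vectors), both sides of the claimed quasi-isomorphism are unchanged by this replacement. So it suffices to treat $(X',R',\mf{m}')$: we may assume $R$ is a flat $\Z_{(p)}$-algebra, still Noetherian, so that $\W_S(X)$ and $\W_S(X\otimes_R R/\mf{m}^j)$ are Noetherian schemes for $S=\{1,p,\dots,p^{n-1}\}$.

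\emph{Second step: \v{C}ech globalization.} I would choose a finite affine cover $X=\bigcup_i\Spec(A_i)$ with each $A_i$ smooth over $R$; since $X$ is separated, all finite intersections $\Spec(A_\sigma)$ are affine with $A_\sigma$ smooth over $R$. Write $\mf{U}=(\W_S(\Spec A_i))$ for the induced cover of $\W_S(X)$, $\bar{\mf{U}}$ for the corresponding cover of $\bar X:=X\otimes_R R/\mf{m}^j$, and fix a $W_n(R)$-free resolution $T$ of $W_n(R/\mf{m}^j)$ as in the theorem. Applying Theorem~\ref{thm-comparison-isom} with $I=\mf{m}^j$ (which contains $p^j$) to each $B=A_\sigma$, and using its functoriality in $B$, the quasi-isomorphisms $W_n\Omega^*_{A_\sigma/R}\otimes_{W_n(R)}T\xr{} W_n\Omega^*_{(A_\sigma/\mf{m}^j A_\sigma)/(R/\mf{m}^j)}$ glue to a morphism of \v{C}ech double complexes
$$
C(\mf{U},W_n\Omega^*_{X/R})\otimes_{W_n(R)}T\xr{} C(\bar{\mf{U}},W_n\Omega^*_{\bar X/(R/\mf{m}^j)})
$$
which is a quasi-isomorphism in each \v{C}ech degree, hence --- the \v{C}ech direction being bounded --- on total complexes. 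By \ref{subsubsection-can-be-computed-by-Cech-covering} the right-hand total complex represents $R\Gamma(W_n\Omega^*_{\bar X/(R/\mf{m}^j)})$, while $C(\mf{U},W_n\Omega^*_{X/R})$ represents $R\Gamma(W_n\Omega^*_{X/R})$ and, $T$ being a bounded-above complex of free modules and hence K-flat, the left-hand total complex represents $R\Gamma(W_n\Omega^*_{X/R})\otimes^{\mathbb{L}}_{W_n(R)}W_n(R/\mf{m}^j)$. Composing gives the asserted quasi-isomorphism; naturality in $X$ would follow from the functoriality clause of the theorem and the independence of the \v{C}ech computation from the chosen cover.

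\emph{Main obstacle.} I do not expect a genuine obstacle once Theorem~\ref{thm-comparison-isom} is available; the points requiring care are the reduction to a $\Z_{(p)}$-base (so the theorem applies verbatim) and the consistent use of the free resolution $T$ in place of $W_n(R/\mf{m}^j)$ itself --- the latter because the \v{C}ech terms $W_n\Omega^q_{A_\sigma/R}$, although $\Z$-torsion-free (Proposition~\ref{proposition-WSOmega-torsionfree}), need not be flat over $W_n(R)$, so it is $-\otimes_{W_n(R)}T$ and not $-\otimes_{W_n(R)}W_n(R/\mf{m}^j)$ that computes the derived tensor product.
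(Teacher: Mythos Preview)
Your argument is correct and follows essentially the same route as the paper's proof, which simply says ``the claim follows from Theorem~\ref{thm-comparison-isom} by using \v{C}ech complexes.'' Your \v{C}ech globalization, with the free resolution $T$ carried along to model the derived tensor product, is exactly the intended unpacking.

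The one point where you are more careful than the paper is the preliminary reduction to a $\Z_{(p)}$-base. This is genuinely needed: Theorem~\ref{thm-comparison-isom} is stated for flat $\Z_{(p)}$-algebras, while the corollary only assumes $R$ is flat and finitely generated over $\Z$, so the theorem does not apply verbatim. Your observation that $R/\mf{m}^j$ is already a $\Z_{(p)}$-algebra, together with $W_n(R)\otimes_{\Z}\Z_{(p)}\cong W_n(R\otimes_{\Z}\Z_{(p)})$ and Corollary~\ref{corollary-de-Rham-Witt-complex-localization}, makes this reduction painless. The paper's proof silently absorbs this step; you have made it explicit.
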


\subsection{Flatness}
\begin{thm}\label{thm-projective-blue}
  Let $R$ be a smooth $\Z$-algebra. Let $X$ be a smooth and proper $R$-scheme. Suppose that the de Rham cohomology $H^*_{dR}(X/R)$ of $X$
is a flat $R$-module. Then $H^*_{dRW}(X/\W_S(R))$ is a finitely generated 
projective $\W_S(R)$-module for all finite truncation sets $S$. Moreover, for an inclusion of finite truncation sets $T\subset S$, the induced map
\begin{equation}\label{equation-from-S-to-T}
H^*_{dRW}(X/\W_S(R))\otimes_{\W_S(R)}\W_T(R)\xr{\cong} H^*_{dRW}(X/\W_T(R))  
\end{equation}
is an isomorphism.
\end{thm}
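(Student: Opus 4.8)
The strategy is to reduce the global statement about $H^*_{dRW}(X/\W_S(R))$ to the $p$-typical case $S = \{1,p,\dots,p^{n-1}\}$, and then to prove flatness there via a completion argument at each maximal ideal of $R$, using the comparison with crystalline cohomology established in Corollary~\ref{corollary-comparison-thm-RGamma}. First I would fix a prime $p$ and pass to $R' = R\otimes_\Z \Z_{(p)}$; by the $\epsilon$-decomposition \eqref{equation-epsilon-decomposition} and Proposition~\ref{proposition-epsilon-decomposion-deRham-Witt}, the category of $\W_S(R')$-modules decomposes as a product of categories of $\W_{(S/n)_p}(R')$-modules, and the de Rham-Witt cohomology decomposes accordingly into a direct sum of the $p$-typical pieces $H^*_{dRW}(X\otimes R'/\W_{(S/n)_p}(R'))$. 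Since projectivity and the base-change isomorphism \eqref{equation-from-S-to-T} can be checked after the faithfully flat base change $R\to \prod_p R\otimes\Z_{(p)}$ (noting $R$ is Noetherian of finite Krull dimension, so only finitely many primes are relevant for a given $S$, and each localization map is flat), it suffices to treat $S = \{1,p,\dots,p^{n-1}\}$ over a flat, finitely generated (indeed smooth) $\Z_{(p)}$-algebra $R$, and then the restriction maps $T\subset S$ reduce to the single restriction $W_n \to W_{n-1}$ plus the already-handled non-$p$-typical restrictions.

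For the $p$-typical case, the plan is: (1) By Proposition~\ref{proposition-finiteness}, each $H^i_{dRW}(X/W_n(R))$ is a finitely generated $W_n(R)$-module, so it suffices to prove flatness; a finitely generated flat module over a Noetherian ring is projective. (2) Flatness of a finitely generated module over the Noetherian ring $W_n(R)$ can be checked at all maximal ideals $\mf{m}'$ of $W_n(R)$, and by Lemma~\ref{lemma-points-of-WSR} every such $\mf{m}'$ lies over a maximal ideal $\mf{p}$ of $R$ with ${\rm char}(R/\mf{p}) = p$; moreover the localization $W_n(R)_{\mf{m}'}$ receives a map from $W_n(R_{\mf{p}})$, so it is enough to prove flatness of $H^i_{dRW}(X/W_n(R))\otimes_{W_n(R)} W_n(R_{\mf{p}})$, and by further faithfully flat base change (completion) of $H^i_{dRW}(X/W_n(R))\otimes_{W_n(R)} W_n(\widehat{R_{\mf{p}}})$ over $W_n(\widehat{R_{\mf{p}}})$, where $\widehat{R_{\mf{p}}} = \varprojlim_j R/\mf{p}^j$ — here I should be slightly careful that $W_n$ of a projective limit relates to the projective limit of $W_n$, which holds since $W_n(A) = A^n$ as a set with polynomial ring operations, so $W_n(\varprojlim_j R/\mf{p}^j) = \varprojlim_j W_n(R/\mf{p}^j)$. (3) Now apply Corollary~\ref{corollary-comparison-thm-RGamma}: for each $j$, $R\Gamma(W_n\Omega^*_{X/R})\otimes^{\mathbb{L}}_{W_n(R)} W_n(R/\mf{p}^j)$ is quasi-isomorphic to $R\Gamma(W_n\Omega^*_{X\otimes R/\mf{p}^j/(R/\mf{p}^j)})$, which by Langer-Zink (Proposition~\ref{proposition-comparison-Langer-Zink}) computes the crystalline cohomology $H^*_{crys}(X\otimes R/\mf{p}^j / W_n(R/\mf{p}^j))$; I must invoke here that the base change hypothesis on $H^*_{dR}(X/R)$ being flat forces the de Rham cohomology of the fibers — and then, via the crystalline-to-de Rham comparison and the degeneration/universal-coefficient formalism — the crystalline cohomology over $W_n(R/\mf{p}^j)$ to be a (finitely generated) \emph{free} $W_n(R/\mf{p}^j)$-module, with formation commuting with the transition maps in $j$. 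Taking $\varprojlim_j$, one deduces that $H^i_{dRW}(X/W_n(R))\otimes_{W_n(R)} W_n(\widehat{R_{\mf{p}}})$ is free, and in particular flat; combined with a Tor-vanishing / Mittag-Leffler argument to control $\varprojlim^1$ and the passage through the derived tensor product, this gives the flatness at $\mf{m}'$.

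Finally, once flatness (hence projectivity) is established in the $p$-typical case for all $n$, the base-change isomorphism \eqref{equation-from-S-to-T} for the restriction $W_n\to W_{n-1}$ follows formally: the derived base change $R\Gamma(W_n\Omega^*_{X/R})\otimes^{\mathbb{L}}_{W_n(R)} W_{n-1}(R)$ computes $R\Gamma(W_{n-1}\Omega^*_{X/R})$ — this uses the short exact sequence \eqref{equation-short-exact-seq-S/n-S-T} relating $W_n$, $W_{n-1}$ and the corresponding restriction of complexes, together with the fact that $W_n\Omega^q_{X/R}$ is $\Z$-torsion-free (Proposition~\ref{proposition-WSOmega-torsionfree}) — and since the cohomology modules are now flat, the hypercohomology spectral sequence / universal coefficients shows the derived tensor product has no higher Tor, giving the naive isomorphism on each $H^i$. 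Reassembling via the $\epsilon$-decomposition over all primes gives the statement for general $S$ and general $T\subset S$.

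\textbf{Main obstacle.} The crux is step (3): establishing that the crystalline cohomology of the fibers $X\otimes R/\mf{p}^j$ over $W_n(R/\mf{p}^j)$ is \emph{free} (not merely finitely generated), uniformly in $j$ with base change. This requires leveraging the flatness hypothesis on $H^*_{dR}(X/R)$ to get the analogous flatness/freeness for the de Rham cohomology of the thickened fibers, and then transporting it through the crystalline comparison and the universal-coefficients spectral sequence for crystalline cohomology with respect to the divided-power thickenings $W_n(R/\mf{p}^j)$ — controlling the relevant $\mathrm{Tor}$ and $\varprojlim^1$ terms. Everything else (the $\epsilon$-decomposition bookkeeping, reduction to the Noetherian local case, the final base-change isomorphism) is formal given the results already in the excerpt.
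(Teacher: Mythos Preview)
Your overall strategy matches the paper's: localize at maximal ideals, use the $\epsilon$-decomposition to reduce to the $p$-typical case, pass to the completion $\hat R=\varprojlim_j R/\mf m^j$ via faithful flatness of $W_n(R_{\mf m})\to W_n(\hat R)$, and identify the de Rham--Witt cohomology after completion with something visibly free built out of $H^*_{dR}(X/R)$. You also correctly flag the projective-limit bookkeeping (this is exactly Lemma~\ref{lemma-going-to-the-projective-limit-blue} in the paper).

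There are, however, two genuine gaps.

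\textbf{The ``main obstacle'' needs the Frobenius lift.} You say freeness of $H^*_{crys}(X_j/W_n(R_j))$ should follow from flatness of $H^*_{dR}(X/R)$ ``via the crystalline-to-de Rham comparison and the degeneration/universal-coefficient formalism'', but you never produce the smooth lift of $X_j$ over $W_n(R_j)$ needed for that comparison. The paper's resolution uses the smoothness of $R$ over $\Z$ in an essential way: it guarantees a lift $\phi:\hat R\to\hat R$ of the absolute Frobenius, hence a ring map $\rho:\hat R\to W_n(\hat R)$, and then $X_{n,j}:=X\times_{\Spec(\hat R),\rho}\Spec(W_n(R_j))$ is a smooth lift of $X_j$. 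The Berthelot--Ogus comparison then gives $H^i_{crys}(X_j/W_n(R_j))\cong H^i_{dR}(X/R)\otimes_{R,\rho}W_n(R_j)$, which is free of the expected rank and compatible in $j$ and $n$ (Proposition~\ref{proposition-de-Rham-Witt-cohomology-limit-blue}). Without $\rho$ there is no a priori reason such a lift exists, and your universal-coefficient language does not supply one.

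\textbf{Your base-change argument is not justified.} For \eqref{equation-from-S-to-T} you assert a quasi-isomorphism
\[
R\Gamma(W_n\Omega^*_{X/R})\otimes^{\mathbb L}_{W_n(R)}W_{n-1}(R)\;\simeq\;R\Gamma(W_{n-1}\Omega^*_{X/R}),
\]
appealing to the short exact sequence \eqref{equation-short-exact-seq-S/n-S-T}. That sequence is for Witt vectors, not for the de Rham--Witt complex: the kernel of $W_n\Omega^*\to W_{n-1}\Omega^*$ is the standard filtration generated by $V_p^{n-1}$ \emph{and} $dV_p^{n-1}$, which is not $V_p^{n-1}(R)\cdot W_n\Omega^*$ in general (already $W_n(B)\otimes_{W_n(R)}W_{n-1}(R)\to W_{n-1}(B)$ fails to be an isomorphism when $B$ is merely smooth and not \'etale over $R$). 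The paper avoids this entirely: once flatness is known, it checks \eqref{equation-from-S-to-T} after faithfully flat base change to $W_m(\hat R)$, where by the same Frobenius-lift identification both sides become $H^i_{dR}(X/R)\otimes_{R,\rho}W_m(\hat R)$ and the map is the identity. This is both simpler and correct.
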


Since $\W_S(R)$ is a noetherian ring and we know that $H^*_{dRW}(X/\W_S(R))$
is a finitely generated $\W_S(R)$-module (Proposition \ref{proposition-finiteness}), it remains to show that it is flat. 
This is a local property and can be checked after localization at maximal ideals of $\W_S(R)$. 
Our proof relies on Theorem \ref{thm-comparison-isom} or, more precisely, 
Corollary \ref{corollary-comparison-thm-RGamma}. 
 
\begin{lemma}\label{lemma-faithfully-flat-blue} 
  Let $R$ be a finitely generated $\Z$-algebra. Let $\mathfrak{m}$ be a maximal ideal of $R$, let $n$ be a positive
integer, and set $p={\rm char}(R/\mf{m})$. 
Then 
$W_n(R_{\mathfrak{m}})\xr{} W_n(\varprojlim_{i} R/\mathfrak{m}^i)$ is faithfully flat.
\begin{proof}
By Lemma \ref{lemma-WSRp-local-more-general}, both rings are local. Thus we only need to prove flatness. 

We note that $W_n(R)$ is a noetherian ring, because $R$ is a finitely generated $\Z$-algebra. Thus
$W_n(R_{\mathfrak{m}})$, being a localization of $W_n(R)$, is a noetherian ring.  

Obviously, we have the equalities
$$
W_n(\varprojlim_i R/\mf{m}^i)=\varprojlim_i W_n(R/\mf{m}^i) = \varprojlim_i W_n(R_{\mf{m}})/W_n(\mf{m}^iR_{\mf{m}}).
$$
Moreover, it is easy to check that $(W_n(\mf{m}^iR_{\mf{m}}))_i$ and $(W_n(\mf{m}R_{\mf{m}})^i)_i$ induce the same topology on $W_n(R_{\mf{m}})$.
Therefore 
\begin{equation}\label{equation-rewrite-as-adic-completion}
\varprojlim_i W_n(R_{\mf{m}})/W_n(\mf{m}R_{\mf{m}})^i \xr{\cong} \varprojlim_i W_n(R_{\mf{m}})/W_n(\mf{m}^iR_{\mf{m}}),  
\end{equation}
which implies flatness.
\end{proof}
\end{lemma}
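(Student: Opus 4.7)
The plan is to identify the target ring with the maximal-ideal-adic completion of a noetherian local ring, and then invoke the classical theorem that such completions are faithfully flat. First, by Lemma \ref{lemma-WSRp-local-more-general}, both $W_n(R_{\mathfrak{m}})$ and $W_n(\varprojlim_i R/\mathfrak{m}^i)$ are local rings: the truncation set $\{1,p,\dots,p^{n-1}\}$ is $p$-typical, and every maximal ideal of $R_{\mathfrak{m}}$ (respectively $\varprojlim_i R/\mathfrak{m}^i$) has residue characteristic $p$. Since a local-to-local ring homomorphism is faithfully flat as soon as it is flat, it suffices to establish flatness.

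Next, I would check noetherianity. Since $R$ is a finitely generated $\Z$-algebra, a standard argument using Teichm\"uller lifts of a set of generators of $R$ together with the noetherianity of $W_n(\Z)$ shows that $W_n(R)$ is itself a finitely generated $\Z$-algebra, hence noetherian by Hilbert's basis theorem; its localization $W_n(R_{\mathfrak{m}})$ inherits noetherianity.

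The technical heart of the argument is to identify $W_n(\varprojlim_i R/\mathfrak{m}^i)$ with the $\mathfrak{M}$-adic completion of $W_n(R_{\mathfrak{m}})$, where $\mathfrak{M}=W_n(\mathfrak{m}R_{\mathfrak{m}})$ denotes the maximal ideal. Writing Witt vectors coordinate-wise and using surjectivity of the transition maps $R/\mathfrak{m}^{i+1}\to R/\mathfrak{m}^i$, one readily obtains
\[
W_n(\varprojlim_i R/\mathfrak{m}^i) \;=\; \varprojlim_i W_n(R_{\mathfrak{m}})/W_n(\mathfrak{m}^i R_{\mathfrak{m}}).
\]
The main obstacle is then to show that the two filtrations $\{W_n(\mathfrak{m}^i R_{\mathfrak{m}})\}_i$ and $\{\mathfrak{M}^j\}_j$ on $W_n(R_{\mathfrak{m}})$ are cofinal. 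The inclusion $\mathfrak{M}^i\subset W_n(\mathfrak{m}^i R_{\mathfrak{m}})$ can be checked on ghost coordinates: an $i$-fold product of elements whose ghost components lie in $\mathfrak{m}R_{\mathfrak{m}}$ has ghost components in $\mathfrak{m}^i R_{\mathfrak{m}}$. For the reverse direction, I would use the Teichm\"uller decomposition $a=\sum_{k=0}^{n-1}V_p^k([a_k])$ of an element $a\in W_n(\mathfrak{m}^i R_{\mathfrak{m}})$, the multiplicativity of $[-]$, and standard Witt vector identities (in particular $F_pV_p=p$ and $V_p(x)V_p(y)=pV_p(xy)$), together with a careful bookkeeping of Witt-addition corrections, to exhibit $W_n(\mathfrak{m}^i R_{\mathfrak{m}})\subset \mathfrak{M}^{c(i,n)}$ for some function $c(i,n)\to\infty$ as $i\to\infty$.

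With the target identified as the $\mathfrak{M}$-adic completion of the noetherian local ring $W_n(R_{\mathfrak{m}})$, flatness is the classical theorem on completions of noetherian local rings; combined with the reduction in the first paragraph, this yields the required faithful flatness.
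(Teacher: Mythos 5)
Your overall plan coincides with the paper's: identify the target with an adic completion of the noetherian local ring $W_n(R_{\mathfrak{m}})$ and invoke flatness of completions of noetherian rings. Two points deserve correction, however.

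First, $W_n(\mathfrak{m}R_{\mathfrak{m}})$ is \emph{not} the maximal ideal of $W_n(R_{\mathfrak{m}})$ when $n\geq 2$. By Lemma~\ref{lemma-WSRp-local-more-general} the maximal ideal is $\ker\bigl(W_n(R_{\mathfrak{m}})\xr{R^S_{\{1\}}} R_{\mathfrak{m}}\xr{} R_{\mathfrak{m}}/\mathfrak{m}R_{\mathfrak{m}}\bigr)$, i.e.\ Witt vectors whose \emph{zeroth} coordinate lies in $\mathfrak{m}R_{\mathfrak{m}}$; $W_n(\mathfrak{m}R_{\mathfrak{m}})$ consists of those with \emph{all} coordinates in $\mathfrak{m}R_{\mathfrak{m}}$, and is strictly smaller (e.g.\ $V_p(1)$ lies in the former but not the latter). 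This does not derail the argument, because the classical flatness theorem you want applies to the $I$-adic completion of a noetherian ring for \emph{any} ideal $I$, not only the maximal one; the paper works directly with the ideal $W_n(\mathfrak{m}R_{\mathfrak{m}})$ and never needs it to be maximal. You could also note that the quotient $W_n(R_{\mathfrak{m}})/W_n(\mathfrak{m}R_{\mathfrak{m}})\cong W_n(R/\mathfrak{m})$ is an Artinian local ring whose maximal ideal is nilpotent (of exponent at most $n$), so the $W_n(\mathfrak{m}R_{\mathfrak{m}})$-adic and maximal-adic topologies actually agree---but this extra step is unnecessary.

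Second, the ghost-coordinate verification of $\mathfrak{M}^i\subset W_n(\mathfrak{m}^i R_{\mathfrak{m}})$ is not sound as stated. Having ghost components in $\mathfrak{m}^i R_{\mathfrak{m}}$ does \emph{not} imply that the Witt components lie in $\mathfrak{m}^i R_{\mathfrak{m}}$; the implication goes only in the other direction (from $gh_1=a_0^p+pa_1\in\mathfrak{m}^i$ one obtains $pa_1\in\mathfrak{m}^i$, not $a_1\in\mathfrak{m}^i$). With $\mathfrak{M}$ correctly interpreted as $W_n(\mathfrak{m}R_{\mathfrak{m}})$, the inclusion $W_n(\mathfrak{m}R_{\mathfrak{m}})^i\subset W_n(\mathfrak{m}^i R_{\mathfrak{m}})$ is best seen from the universal Witt-multiplication polynomials: each $P_j(a_0,\dots,a_j,b_0,\dots,b_j)$ lies in the product ideal $(a_0,\dots,a_j)(b_0,\dots,b_j)$ of $\Z[a_\bullet,b_\bullet]$, so $W_n(\mathfrak{a})\cdot W_n(\mathfrak{b})\subset W_n(\mathfrak{a}\mathfrak{b})$ for ideals $\mathfrak{a},\mathfrak{b}$. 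The paper leaves the reverse cofinality $W_n(\mathfrak{m}^i R_{\mathfrak{m}})\subset W_n(\mathfrak{m}R_{\mathfrak{m}})^{c(i)}$ as ``easy to check,'' and your Teichm\"uller-decomposition sketch is a reasonable route, but note the additive corrections make it nontrivial (the Teichm\"uller lift of a sum of products is not a sum of products of Teichm\"uller lifts), so a full argument would need the estimates on the universal addition polynomials you allude to.
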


\begin{lemma}\label{lemma-going-to-the-projective-limit-blue}
 Let $R$ be a finitely generated $\Z$-algebra. Let $\mathfrak{m}$ be a maximal ideal of $R$,  let $n$ be a positive
integer, and set $p={\rm char}(R/\mf{m})$.
Let $C$ be a bounded complex of $W_n(R_{\mathfrak{m}})$-modules such that
$H^i(C)$ is a finitely generated $W_n(R_{\mathfrak{m}})$-module for all $i$. 
Then, for all $i$, 
$$
H^i(C)\otimes_{W_n(R_{\mathfrak{m}})}  W_n(\varprojlim_{j} R/\mathfrak{m}^j) \cong \varprojlim_j
H^i\left(C\otimes^{\mathbb{L}}_{W_n(R_{\mathfrak{m}})} W_n(R/\mathfrak{m}^j)\right).
$$
\begin{proof}
Set $\hat{R}:=\varprojlim_{j} R/\mathfrak{m}^j$. 
The map is induced by $C\xr{} C\otimes^{\mathbb{L}}_{W_n(R_{\mathfrak{m}})} W_n(R/\mathfrak{m}^j)$
and the $ W_n(\hat{R})$-module structure on the right hand side.

As a first step we will prove that $H^i\left(C\otimes^{\mathbb{L}}_{W_n(R_{\mathfrak{m}})} W_n(R/\mathfrak{m}^j)\right)$ is a finite group. 
Clearly, we may assume that $C=C_0$ is concentrated in degree $0$. Since $C_0$
is finitely generated we conclude that ${\rm Tor}_i^{W_n(R_{\mathfrak{m}})}(C_0,W_n(R/\mathfrak{m}^j))$ is a finitely
generated $W_n(R/\mathfrak{m}^j)$-module for all $i$. The ring $W_n(R/\mathfrak{m}^j)$ contains only finitely
many elements, hence $$H^{-i}(C\otimes^{\mathbb{L}}_{W_n(R_{\mathfrak{m}})} W_n(R/\mathfrak{m}^j))={\rm Tor}_i(C_0,W_n(R/\mathfrak{m}^j))$$ is finite. 

By using Lemma \ref{lemma-faithfully-flat-blue} and the first step (all $R^1\varprojlim$ vanish) we can reduce the assertion to the case of a complex $C=C_0$ that
is concentrated in degree zero (hence $C_0$ is finitely generated). In this case we need to show:
\begin{itemize}
\item [(a)] $C_0\otimes_{W_n(R_{\mathfrak{m}})}W_n(\hat{R})\xr{=}\varprojlim_{j}(C_0\otimes_{W_n(R_{\mathfrak{m}})}W_n(R/\mathfrak{m}^j))$,
\item [(b)] $\varprojlim_{j}{\rm Tor}_i(C_0,W_n(R/\mathfrak{m}^j))=0$ for all $i>0$. 
\end{itemize}
Claim (a) follows from (\ref{equation-rewrite-as-adic-completion}).  Claim (b) follows from (a) and the flatness of  $W_n(R_{\mathfrak{m}}) \xr{} W_n(\hat{R})$. 
\end{proof}
\end{lemma}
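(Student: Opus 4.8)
The plan is to carry out a d\'evissage reducing the statement to the case where $C=C_0[0]$ is a single finitely generated $W_n(R_{\mf{m}})$-module in degree zero, the key point being that after reduction modulo $\mf{m}^j$ all the groups involved become finite. Write $\hat{R}=\varprojlim_j R/\mf{m}^j$. First I would record, using Lemma~\ref{lemma-faithfully-flat-blue} and the identity (\ref{equation-rewrite-as-adic-completion}), that $W_n(R_{\mf{m}})\xr{}W_n(\hat{R})$ is flat and that $W_n(\hat{R})$ is the $W_n(\mf{m}R_{\mf{m}})$-adic completion of the noetherian ring $W_n(R_{\mf{m}})$; by flatness the left-hand side of the assertion equals $H^i(C\otimes^{\mathbb{L}}_{W_n(R_{\mf{m}})}W_n(\hat{R}))$.

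The first real step is to prove that $H^i(C\otimes^{\mathbb{L}}_{W_n(R_{\mf{m}})}W_n(R/\mf{m}^j))$ is a finite group for all $i$ and $j$. Since $R$ is a finitely generated $\Z$-algebra, $R/\mf{m}$ is a finite field, so each $R/\mf{m}^j$, and hence each $W_n(R/\mf{m}^j)$, is a finite ring. Truncating $C$ reduces this to $C=C_0$ a single finitely generated module, where ${\rm Tor}_i^{W_n(R_{\mf{m}})}(C_0,W_n(R/\mf{m}^j))$ is a finitely generated --- hence finite --- $W_n(R/\mf{m}^j)$-module. It follows that every inverse system in $j$ formed from such groups is Mittag--Leffler, so $\varprojlim_j$ is exact on it and all the $R^1\varprojlim_j$ that occur vanish.

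Granting this, both functors $C\mapsto H^i(C)\otimes_{W_n(R_{\mf{m}})}W_n(\hat{R})=H^i(C\otimes^{\mathbb{L}}_{W_n(R_{\mf{m}})}W_n(\hat{R}))$ and $C\mapsto\varprojlim_j H^i(C\otimes^{\mathbb{L}}_{W_n(R_{\mf{m}})}W_n(R/\mf{m}^j))$ send a distinguished triangle of bounded complexes with finitely generated cohomology to a long exact sequence --- for the second one applies $\varprojlim_j$ to the inverse system of long exact sequences of the triangle, which remains exact by the finiteness just established. Since the map of the statement is a morphism between these two cohomological functors, the five lemma together with induction on the number of nonzero cohomology modules of $C$ reduces the proof to $C=C_0[0]$ with $C_0$ finitely generated. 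In that case the claim splits into: (a) $C_0\otimes_{W_n(R_{\mf{m}})}W_n(\hat{R})\xr{=}\varprojlim_j(C_0\otimes_{W_n(R_{\mf{m}})}W_n(R/\mf{m}^j))$, and (b) $\varprojlim_j{\rm Tor}_i^{W_n(R_{\mf{m}})}(C_0,W_n(R/\mf{m}^j))=0$ for all $i>0$. Claim (a) is (\ref{equation-rewrite-as-adic-completion}) applied to the finitely generated module $C_0$: Artin--Rees shows the $W_n(\mf{m}^iR_{\mf{m}})$-adic and $W_n(\mf{m}R_{\mf{m}})^i$-adic topologies on $C_0$ coincide, and completing a finitely generated module over a noetherian ring amounts to tensoring with the completed ring. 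For (b) I would take a resolution $P_\bullet\xr{}C_0$ by finitely generated free $W_n(R_{\mf{m}})$-modules; each $P_k\otimes_{W_n(R_{\mf{m}})}W_n(R/\mf{m}^j)$ is finite, so $\varprojlim_j$ commutes with the homology of $P_\bullet\otimes W_n(R/\mf{m}^j)$, and applying (a) to each $P_k$ identifies $\varprojlim_j{\rm Tor}_i(C_0,W_n(R/\mf{m}^j))$ with $H_i(P_\bullet\otimes_{W_n(R_{\mf{m}})}W_n(\hat{R}))={\rm Tor}_i^{W_n(R_{\mf{m}})}(C_0,W_n(\hat{R}))$, which vanishes in positive degrees by flatness.

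The hard part is the finiteness input in the second paragraph: that is the only place where a non-formal fact intervenes, namely the Nullstellensatz for finitely generated $\Z$-algebras, which forces $R/\mf{m}$ and hence $W_n(R/\mf{m}^j)$ to be finite. Everything downstream of it --- the Mittag--Leffler vanishing, the exactness of $\varprojlim_j$, the d\'evissage via the five lemma, and the comparison of adic topologies --- is routine commutative algebra.
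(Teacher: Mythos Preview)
Your proof is correct and follows essentially the same approach as the paper's: both establish finiteness of $H^i(C\otimes^{\mathbb{L}}W_n(R/\mf{m}^j))$ first, use this to kill $R^1\varprojlim$ and reduce by d\'evissage to a single finitely generated module in degree zero, and then verify the two claims (a) and (b) via (\ref{equation-rewrite-as-adic-completion}) and flatness of $W_n(R_{\mf{m}})\to W_n(\hat{R})$. Your write-up is simply more explicit in a couple of places --- spelling out the five-lemma d\'evissage and the free-resolution argument for (b) --- where the paper leaves the details to the reader.
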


\begin{proposition}\label{proposition-de-Rham-Witt-cohomology-limit-blue}
Assumptions as in Corollary \ref{corollary-comparison-thm-RGamma}. %%  Let $\mathfrak{m}$ be a maximal ideal of $R$,  let $n$ be a positive %% integer, and  set $p={\rm char}(R/\mathfrak{m})$. 
Set $X_j:=X\otimes_{R}R/\mathfrak{m}^j$, $R_j:=R/\mathfrak{m}^j$, $\hat{R}=\varprojlim_j R_j$.
\begin{itemize}
\item [(i)] For all $i$ and $n$, we have a functorial isomorphism 
\begin{equation}\label{equation-de-Rham-Witt-cohomology-limit-blue}
H^i_{dRW}(X/W_n(R))\otimes_{W_n(R)} W_n(\hat{R})\xr{\cong} \varprojlim_j H^i(X_j,W_n\Omega^*_{X_j/R_j}).   
\end{equation}
\item [(ii)] Suppose furthermore that the following conditions are satisfied:
  \begin{enumerate}
  \item There exists a lifting $\phi:\hat{R}\xr{} \hat{R}$ of the absolute Frobenius on $R/\mf{m}$; let $\rho:\hat{R}\xr{} W_n(\hat{R})$ be the induced ring homomorphism. By abuse of notation we will denote the restriction of $\rho$ to $R$ by $\rho$ again.
  \item The de Rham cohomology $H^*_{dR}(X/R)$ is a locally free $R$-module.
  \end{enumerate}  
Then there is an isomorphism 
$$
H^i(X_j,W_n\Omega^*_{X_j/R_j})  \cong  H^i_{dR}(X/R)\otimes_{R,\rho} W_n(R_j) 
$$ 
which is natural in the following sense. For all $l>j$ we have a commutative diagram
$$
\xymatrix{
H^i(X_l,W_n\Omega^*_{X_l/R_l})\ar[r]^-{\cong}\ar[d]
&  
H^i_{dR}(X/R)\otimes_{R,\rho} W_n(R_l)\ar[d]^{id\otimes W_n(R_l\xr{} R_j)}
\\
H^i(X_j,W_n\Omega^*_{X_j/R_j}) \ar[r]^-{\cong} 
&  
H^i_{dR}(X/R)\otimes_{R,\rho} W_n(R_j).
}
$$
For a morphism of $R$-schemes $f:X\xr{} Y$, where $Y/R$ satisfies the same assumptions as $X$, the following diagram is commutative:
$$
\xymatrix{
H^i(Y_j,W_n\Omega^*_{Y_j/R_j}) \ar[r]^-{\cong} \ar[d]^{f^*}
&  
H^i_{dR}(Y/R)\otimes_{R,\rho} W_n(R_j) \ar[d]^{f^*\otimes id}
\\
H^i(X_j,W_n\Omega^*_{X_j/R_j}) \ar[r]^-{\cong}  
&  
H^i_{dR}(X/R)\otimes_{R,\rho} W_n(R_j).
}
$$
\end{itemize}
\begin{proof}
For (i). Set $C=R\Gamma(W_n\Omega^*_{X/R})\otimes_{W_n(R)} W_n(R_{\mf{m}})$. 
In view of Proposition \ref{proposition-finiteness}, the assumptions for Lemma \ref{lemma-going-to-the-projective-limit-blue}
are satisfied. Applying the lemma and using Corollary \ref{corollary-comparison-thm-RGamma} implies the claim.

For (ii).   Consider the following cartesian squares
$$
\xymatrix
{
X_j \ar[r]\ar[d]
&
X_{n,j} \ar[r]\ar[d]
&
X\otimes_R \hat{R}\ar[d]
\\
\Spec(R_j) \ar[r]^-{{\rm gh}_1}
&
\Spec(W_n(R_j)) \ar[r]^-{\rho}
&
\Spec(\hat{R}),
}
$$
where $X_{n,j}$ is by definition the fibre product. Note that $\hat{R}\xr{\rho} W_n(\hat{R})\xr{{\rm gh}_1}\hat{R}$ is the identity, which implies
that the left hand square is cartesian.

By the comparison theorem \cite[Theorem~3.1]{LZ} we have a functorial isomorphism
$$
H^i(X_j,W_n\Omega^*_{X_j/R_j}) \cong H^i_{crys}(X_j/W_n(R_j)).
$$
By the comparison isomorphism
of crystalline cohomology with de Rham cohomology due to Berthelot-Ogus we get
\begin{align*}
  H^i_{crys}(X_j/W_n(R_j)) & \cong H^i_{dR}(X_{n,j}/W_n(R_j)) \\
                         & \cong H^i_{dR}(X/R)\otimes_{R,\rho} W_n(R_j).  
\end{align*}
For the last isomorphism we have used condition (2) on the de Rham cohomology of $X$. 
\end{proof}
\end{proposition}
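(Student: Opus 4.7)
For part (i), the strategy is to directly feed the setup into Lemma \ref{lemma-going-to-the-projective-limit-blue}. I would set
$$C := R\Gamma(W_n\Omega^*_{X/R}) \otimes_{W_n(R)} W_n(R_{\mf{m}}),$$
which I can view as a bounded complex of $W_n(R_{\mf{m}})$-modules using Proposition \ref{proposition-finiteness}(iv) together with the Hodge to de Rham spectral sequence. The cohomology modules $H^i(C)$ are finitely generated $W_n(R_{\mf{m}})$-modules by Proposition \ref{proposition-finiteness}(iii) and the fact that $W_n(R) \to W_n(R_{\mf{m}})$ is a localization (by the Witt vector-localization compatibility recalled in the excerpt, applied to the multiplicative set $R \setminus \mf{m}$). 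Lemma \ref{lemma-going-to-the-projective-limit-blue} then yields
$$H^i(C) \otimes_{W_n(R_{\mf{m}})} W_n(\hat R) \;\cong\; \varprojlim_j H^i\bigl(C \otimes^{\mathbb{L}}_{W_n(R_{\mf{m}})} W_n(R/\mf{m}^j)\bigr),$$
and Corollary \ref{corollary-comparison-thm-RGamma} identifies the derived tensor product with $R\Gamma(W_n\Omega^*_{X_j/R_j})$; the left-hand side is $H^i_{dRW}(X/W_n(R)) \otimes_{W_n(R)} W_n(\hat R)$ since $W_n(R_{\mf{m}}) \to W_n(\hat R)$ factors the full base change. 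Functoriality is inherited from that of Corollary \ref{corollary-comparison-thm-RGamma}.

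For part (ii), I would string together three comparison isomorphisms. First, Proposition \ref{proposition-comparison-Langer-Zink} identifies the left-hand side with the Langer-Zink de Rham-Witt complex, and \cite[Theorem~3.1]{LZ} gives the crystalline comparison
$$H^i(X_j, W_n\Omega^*_{X_j/R_j}) \;\cong\; H^i_{crys}(X_j/W_n(R_j)).$$
Second, the point of introducing $\rho\colon \hat R \to W_n(\hat R)$ is that $gh_1 \circ \rho = \id_{\hat R}$, so the left square in the diagram displayed in the statement is Cartesian; that is, $W_n(R_j)$ is a PD-thickening of $R_j$ lifting the situation, and $X_{n,j}$ is a smooth lift of $X_j$ to $W_n(R_j)$. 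The Berthelot-Ogus base change theorem for crystalline cohomology then gives
$$H^i_{crys}(X_j/W_n(R_j)) \;\cong\; H^i_{dR}(X_{n,j}/W_n(R_j)).$$
Finally, condition (2) (flatness of $H^*_{dR}(X/R)$, which over a noetherian $R$ forces local freeness given finite generation) allows flat base change along $\rho\colon R \to W_n(R_j)$ to produce
$$H^i_{dR}(X_{n,j}/W_n(R_j)) \;\cong\; H^i_{dR}(X/R) \otimes_{R,\rho} W_n(R_j).$$

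The naturality diagrams (in $j$ and in maps $f\colon X \to Y$) are formal consequences of the functoriality of each of the three comparison isomorphisms above: the Langer-Zink comparison is functorial in the $R$-algebra, Berthelot-Ogus is functorial in PD-morphisms, and flat base change for de Rham cohomology is functorial in the target. I expect the main obstacle to be bookkeeping rather than substance: one must verify that the lift of $X_j$ used in Berthelot-Ogus is genuinely $X \otimes_{R,\rho} W_n(R_j)$ (compatibly with the Frobenius lift on $\hat R$) and that the transition maps in $j$, induced by $R_l \twoheadrightarrow R_j$, are compatible with $\rho$; this hinges on the fact that $\rho$ is a ring homomorphism intertwining the quotient maps, so commutativity propagates through each comparison step.
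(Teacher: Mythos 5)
Your argument reproduces the paper's proof step for step: part (i) sets $C := R\Gamma(W_n\Omega^*_{X/R}) \otimes_{W_n(R)} W_n(R_{\mf{m}})$ and combines Proposition \ref{proposition-finiteness}, Lemma \ref{lemma-going-to-the-projective-limit-blue}, and Corollary \ref{corollary-comparison-thm-RGamma}, while part (ii) chains the Langer-Zink comparison, the Berthelot-Ogus comparison for the lift $X_{n,j}$ coming from $gh_1\circ\rho = \id$, and flat base change for de Rham cohomology. This is the same route as the paper's own proof, with only minor elaborations (spelling out that $W_n(R)\to W_n(R_{\mf{m}})$ is a localization and tracking naturality through each comparison).
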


\begin{proof}[Proof of Theorem \ref{thm-projective-blue}]
Without loss of generality we may assume that $R$ is integral. 
  It suffices to show the flatness of $H^i_{dRW}(X/\W_S(R))$ when considered as 
a $\W_S(R)$-module. This can be checked after localizing at maximal ideals. 
By using Lemma \ref{lemma-points-of-WSR} it suffices to prove that $H^i_{dRW}(X/\W_S(R))\otimes_{\W_S(R)} \W_S(R_{\mf{m}})$ is a flat 
$\W_S(R_{\mf{m}})$-module for every maximal ideal $\mf{m}\subset R$. Similarly, it is sufficient to prove (\ref{equation-from-S-to-T}) after
tensoring with $\W_T(R_{\mf{m}})$.

Let $\mf{m}\subset R$ be a maximal ideal, and set $p={\rm char}(R/\mf{m})$.
By using the decomposition of $\W_S\Omega^*_{X/R}\otimes \Z_{(p)}$ from Proposition \ref{proposition-epsilon-decomposion-deRham-Witt} together with \eqref{equation-epsilon-decomposition} we may assume that $S$ is $p$-typical, say $S=\{1,p,\dots,p^{n-1}\}$, and hence $T=\{1,p,\dots,p^{m-1}\}$.

Since $R$ is a smooth $\Z$-algebra, there is a lifting $\phi:\hat{R}\xr{}\hat{R}$ of the absolute 
Frobenius of $R/\mf{m}$, where $\hat{R}=\varprojlim_j R/\mf{m}^j$. Therefore Proposition \ref{proposition-de-Rham-Witt-cohomology-limit-blue} implies
\begin{multline*}
H^i_{dRW}(X/W_n(R))\otimes_{W_n(R)} W_n(\hat{R})\xr{\cong} \varprojlim_j H^i(X_j,W_n\Omega^*_{X_j/R_j}) \\
\xr{\cong} H^i_{dR}(X/R)\otimes_{R,\rho} W_n(\hat{R}),     
\end{multline*} 
and we can prove the flatness by using Lemma \ref{lemma-faithfully-flat-blue}.

Tensoring (\ref{equation-from-S-to-T}) with $W_{m}(\hat{R})$ (recall that $T=\{1,p,\dots,p^{m-1}\}$) and by using Proposition \ref{proposition-de-Rham-Witt-cohomology-limit-blue}(ii), we see that $\text{(\ref{equation-from-S-to-T})}\otimes W_{m}(\hat{R})$ is induced by the identity on the de Rham cohomology. Hence it is an 
isomorphism by Lemma \ref{lemma-faithfully-flat-blue}.
\end{proof}

\section{Poincar\'e duality} \label{section-values}

\subsection{A rigid $\otimes$-category}

\begin{definition} \label{definition-phi-N}
  Let $R$ be a $\Z$-torsion-free ring  and $Q$ a non-empty truncation set. We denote by $\mathcal{C}'_{Q,R}$
the category with objects being contravariant functors $S\mapsto M_S$ from finite truncation 
sets contained in $Q$ to sets, together with 
\begin{itemize} 
\item a $\W_S(R)$-module structure on $M_S$, for all truncation sets $S\subset Q$, such that
the maps $M_S\xr{} M_T$, for $T\subset S$, are morphisms of $\W_S(R)$-modules when $M_T$ is considered as
a $\W_S(R)$-module via the projection $\pi_T:\W_S(R)\xr{} \W_T(R)$,
\item for all positive integers $n$ and all truncation sets $S\subset Q$, maps
$$
\phi_n:M_S\xr{} M_{S/n},
$$
such that 
\begin{itemize}
\item $\phi_{n}\circ \phi_m=\phi_{nm}$ for all $n,m$,
\item $\phi_n$ is a morphism of $\W_S(R)$-modules when $M_{S/n}$ is considered
as a $\W_S(R)$ module via $F_n:\W_S(R)\xr{} \W_{S/n}(R)$,
\item for all truncation sets $T\subset S\subset Q$ the following diagram is commutative: 
$$
\xymatrix{
M_S\ar[r]^{\phi_n}\ar[d]
&
M_{S/n}\ar[d]
\\
M_T\ar[r]^{\phi_n}
&
M_{T/n}.
}
$$
\end{itemize}
\end{itemize}
The functor $S\mapsto M_S$ is required to satisfy the following properties.
\begin{itemize}
\item For all truncation sets $S\subset Q$, the $\W_S(R)$-module $M_S$ is finitely generated and projective. 
\item For all truncation sets $T\subset S\subset Q$:
$$
\W_T(R)\otimes_{\W_S(R)}M_S \xr{} M_T
$$
is an isomorphism.
\item There is a positive integer $a$ such that there exist morphisms 
  \begin{equation}\label{equation-beta_N}
\beta_n:M_{S/n}\xr{} M_{S},    
  \end{equation}
for all positive integers $n$ and all finite truncation sets $S\subset Q$, satisfying the following properties:
\begin{itemize}
\item $\beta_n$ is a morphism of $\W_S(R)$-modules when $M_{S/n}$ is considered
as a $\W_S(R)$ module via $F_n:\W_S(R)\xr{} \W_{S/n}(R)$,
\item $\beta_n(\lambda \cdot \phi_n(x))=n^{a-1}V_n(\lambda)\cdot x,$ for all $x\in M_S,\lambda\in \W_{S/n}(R)$,
\item $\phi_n\circ \beta_n=n^{a}.$
\end{itemize}
\end{itemize}

Morphisms between two objects in $\mathcal{C}'_{Q,R}$ are morphism of functors 
that are compatible with the $[S\mapsto \W_S(R)]$-module structure and 
commute with  $\phi_n$ for all positive integers $n$. We simply write $\mathcal{C}'_{R}$ for $\mathcal{C}'_{\mathbb{N}_{>0},R}$.
\end{definition}

\begin{remark}
Note that the $\beta_n$ are not part of the datum; we can always change
$\beta_n\mapsto n^{b}\beta_n$ for a non-negative integer $b$.  
\end{remark}

For an inclusion of truncation sets $Q\subset Q'$, we have an evident functor 
$$
\mathcal{C}'_{Q',R}\xr{}  \mathcal{C}'_{Q,R}.
$$

\begin{proposition}
Let $M\in {\rm ob}(\mathcal{C}'_{Q,R})$. Let $S\subset Q$ be a finite truncation set. Fix $a>0$ and $\beta_n$ as in \ref{equation-beta_N}. 
\begin{enumerate}
\item For all positive integers $n,m$ with $(n,m)=1$ we have 
$$\phi_n\circ \beta_m=\beta_m\circ \phi_n,$$
considered as morphisms $M_{S/m}\xr{} M_{S/n}$.
\item For all positive integers $n,m$ we have 
$$\beta_{n}\circ \beta_m=\beta_{nm},$$
considered as morphisms $M_{S/nm}\xr{} M_{S}$.
\item For all truncation sets $T\subset S$ the following diagram is commutative: 
$$
\xymatrix{
M_{S/n}\ar[r]^{\beta_n}\ar[d]
&
M_{S}\ar[d]
\\
M_{T/n}\ar[r]^{\beta_n}
&
M_{T}.
}
$$
\end{enumerate}
  \begin{proof}
    The ring $\W_S(R)$ is $\Z$-torsion-free, because  
it can be considered via the ghost map as a subring of $\prod_{s\in S}R$, and $R$ is $\Z$-torsion-free by assumption.  Since $M_S$ is a flat $\W_S(R)$-module, it is $\Z$-torsion-free. 

For (1). Since ${\rm image}(\phi_m)\supset m^aM_{S/m}$ it is sufficient to prove
$$
\phi_n\circ \beta_m\circ \phi_m=\beta_m\circ \phi_n\circ \phi_m.
$$
This follows from $\beta_m\circ \phi_m=V_m(1)m^{a-1}$ and $\phi_n\circ \phi_m=\phi_m\circ \phi_n$.

For (2). We may argue as in (1) by composing with $\circ \phi_{nm}$.
%\begin{align*}
$$
\beta_{n}\circ \beta_m \circ \phi_{nm}(x)= 
\beta_{n}(V_{m}(1)m^{a-1}\phi_{n}(x))
%&=m^{a-1}\beta_n(\phi_{n}(V_{nm}(1)\cdot x))\\
%&=m^{a-1}n^{a-1}V_n(1)V_{nm}(1)x\\
=m^{a-1}n^{a-1}V_{nm}(1)x \\%\quad \text{because $V_n(1)V_{nm}(1)=nV_{nm}(1)$,}\\
=\beta_{nm}\circ \phi_{nm}(x).
$$%%\end{align*}
For (3). We may argue as in (1) by composing with $\circ \phi_{n}$. The computation 
is straightforward.
  \end{proof}
\end{proposition}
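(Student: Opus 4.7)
The plan is to prove all three claims by one reduction. Each $M_T$ is flat over $\W_T(R)$, and $\W_T(R)$ embeds into $\prod_{s\in T} R$ via the ghost map (since $R$ is $\Z$-torsion-free by assumption); consequently every $M_T$ is $\Z$-torsion-free. Combined with $\phi_k\circ\beta_k=k^a\cdot\id$, which forces $k^a M_{S/k}\subset\mathrm{image}(\phi_k)$, this yields a \emph{torsion-freeness trick}: to prove that two $\W$-linear maps $f,g\colon M_{S/k}\to N$ into a $\Z$-torsion-free $N$ agree, it suffices to check $f\circ\phi_k=g\circ\phi_k$, because then $k^a(f-g)=0$ on $M_{S/k}$, and $N$ being $\Z$-torsion-free forces $f=g$. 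All three statements will be reduced to this trick with an appropriate choice of $k$, plus the identities $\beta_n\circ\phi_n=n^{a-1}V_n(1)\cdot\id$ (specialize $\lambda=1$), $F_n$-linearity of $\phi_n$ and $\beta_n$, $\phi_n\circ\phi_m=\phi_{nm}$, and the Witt-vector relations $F_n V_m=V_m F_n$ for coprime $n,m$ and $V_nV_m=V_{nm}$.

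For (1), with $(n,m)=1$, I would precompose $\phi_n\beta_m$ and $\beta_m\phi_n$ with $\phi_m\colon M_S\to M_{S/m}$. Using $\beta_m\phi_m=m^{a-1}V_m(1)\cdot\id$ and the $F_n$-linearity of $\phi_n$, the left side becomes $m^{a-1}F_n(V_m(1))\phi_n=m^{a-1}V_m(1)\phi_n$, where the last equality uses $F_nV_m=V_mF_n$ for coprime indices together with $F_n(1)=1$. The right side directly becomes $\beta_m\phi_m\phi_n=m^{a-1}V_m(1)\phi_n$. For (2), I would precompose with $\phi_{nm}=\phi_m\phi_n$ and unravel: $\beta_n\beta_m\phi_m\phi_n(x)=m^{a-1}\beta_n\bigl(V_m(1)\phi_n(x)\bigr)=m^{a-1}n^{a-1}V_n(V_m(1))x=(nm)^{a-1}V_{nm}(1)x$ by the defining property of $\beta_n$ and $V_nV_m=V_{nm}$; this matches $\beta_{nm}\phi_{nm}(x)=(nm)^{a-1}V_{nm}(1)x$.

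For (3), I would precompose the two compositions $M_{S/n}\to M_T$ with $\phi_n\colon M_S\to M_{S/n}$ and write $\pi\colon M_S\to M_T$ for the restriction. The upper-right route gives $\pi\circ\beta_n\circ\phi_n=n^{a-1}\pi\bigl(V_n(1)\cdot\bigr)=n^{a-1}V_n(1)\pi$ by $\W_S(R)$-linearity of $\pi$. The left-down route gives $\beta_n^T\circ(\text{restriction})\circ\phi_n=\beta_n^T\circ\phi_n^T\circ\pi=n^{a-1}V_n(1)\pi$, using the already-assumed commutativity of $\phi_n$ with restriction. The two coincide.

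There is no real obstacle: the argument is a short bookkeeping calculation once the torsion-freeness reduction is in place. The one point that genuinely uses a hypothesis is the $\Z$-torsion-freeness of the target modules, which propagates from the ghost-map embedding $\W_S(R)\hookrightarrow\prod R$ and the assumption that $R$ is $\Z$-torsion-free; without this, the relation $\phi_k\beta_k=k^a$ would no longer determine $\phi_n\beta_m$ after precomposing with $\phi_m$.
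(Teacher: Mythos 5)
Your proposal is correct and follows essentially the same route as the paper: establish $\Z$-torsion-freeness of $M_S$ via the ghost-map embedding, then reduce each identity to its precomposition with $\phi_m$, $\phi_{nm}$, or $\phi_n$, using $\beta_k\phi_k=k^{a-1}V_k(1)$ and the $F$-linearity of $\phi$, $\beta$, and the restriction maps. You spell out the coprime-Witt-vector identity $F_nV_m=V_mF_n$ and the computation for (3), which the paper leaves as "straightforward," but the underlying argument is identical.
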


\begin{lemma}\label{lemma-commute-with-beta}
 Let $f:M\xr{} N$ be a morphism in $\mathcal{C}'_{Q,R}$,  
and choose a positive integer $a$ and $\beta_{M,n}$, $\beta_{N,n}$ as in (\ref{equation-beta_N}).
  Then $f_S\circ \beta_{M,n}=\beta_{N,n}\circ f_{S/n}$ for all $S,n$.
In particular, the choice of the $\beta_{n}$ in Definition \ref{definition-phi-N} depends only on the positive integer $a$.  
\begin{proof}
 %Since $\W_S(R)$ is torsion-free, and $M_S$ is a flat $\W_S(R)$-module, it follows that $M_S$ is torsion-free. 
Again, we may use that $M_S$ is $\Z$-torsion-free. Now, 
  \begin{align*}
    n^a\beta_nf(x)&=\beta_n(f(n^ax))
                  =\beta_n(f(\phi_n\beta_n(x)))\\
                  &=\beta_n\phi_nf(\beta_n(x))
                  =n^{a-1}V_n(1)f(\beta_n(x))\\
                  &=f(n^{a-1}V_n(1)\beta_n(x))
                  =f(\beta_n\phi_n\beta_n(x))
                  =n^af(\beta_n(x)).
  \end{align*}
\end{proof}
\end{lemma}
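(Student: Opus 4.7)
The plan is to reduce the identity $f_S\circ \beta_{M,n}=\beta_{N,n}\circ f_{S/n}$ to an equality after multiplication by $n^a$, and then to cancel $n^a$ using $\Z$-torsion-freeness. For the torsion-freeness, the ghost map realizes $\W_S(R)$ as a subring of $\prod_{s\in S}R$, which is $\Z$-torsion-free because $R$ is; hence $\W_S(R)$ is $\Z$-torsion-free, and any flat module over it, in particular $M_S$ and $N_S$ (finitely generated and projective by assumption), is again $\Z$-torsion-free. So it is enough to prove that $n^a f_S\beta_{M,n}(x)=n^a\beta_{N,n}f_{S/n}(x)$ for every $x\in M_{S/n}$.

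The main computation is a short manipulation based on two defining relations for $\beta_n$, namely $\phi_n\circ \beta_n=n^a$ and $\beta_n\circ \phi_n=n^{a-1}V_n(1)\cdot(-)$ (the latter being the $\lambda=1$ case of the given formula), combined with the two structural properties that characterize a morphism $f$ in $\mathcal{C}'_{Q,R}$: $\W_S(R)$-linearity and commutation with $\phi_n$. Starting from $n^a\beta_{N,n}f_{S/n}(x)$, I would rewrite $n^a x=\phi_n\beta_{M,n}(x)$, push $f$ across $\phi_n$, apply $\beta_{N,n}\circ \phi_n = n^{a-1}V_n(1)\cdot(-)$ on $N_S$ to collapse the inner composition, pull the resulting factor $n^{a-1}V_n(1)\in \W_S(R)$ through $f_S$ by linearity, and finally reverse the same manipulation on the $M$-side via $n^{a-1}V_n(1)\beta_{M,n}(x)=\beta_{M,n}\phi_n\beta_{M,n}(x)=n^a\beta_{M,n}(x)$. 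This produces $n^a f_S(\beta_{M,n}(x))$, and dividing by $n^a$ concludes.

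For the closing \emph{in particular} statement, I would apply the identity just established to $f=\mathrm{id}_M\colon M\to M$ equipped with two candidate families $\beta_n$ and $\beta'_n$ corresponding to the same exponent $a$ on source and target respectively; the lemma then gives $\beta'_n=\beta_n$ directly. No serious obstacle is expected. The only conceptual subtlety worth flagging is that a direct comparison of $f_S\beta_{M,n}$ with $\beta_{N,n}f_{S/n}$ is not available because $\beta_n$ is only $F_n$-semilinear (not $\W_S(R)$-linear in the plain sense), which is precisely why the detour through multiplication by $n^a$ and $\Z$-torsion-freeness is needed.
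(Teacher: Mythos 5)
Your argument is correct and coincides with the paper's proof: the same chain $n^a\beta_{N,n}f_{S/n}(x)=\beta_{N,n}f_{S/n}(\phi_n\beta_{M,n}x)=\beta_{N,n}\phi_n f_S(\beta_{M,n}x)=n^{a-1}V_n(1)f_S(\beta_{M,n}x)=f_S(\beta_{M,n}\phi_n\beta_{M,n}x)=n^a f_S(\beta_{M,n}x)$, followed by cancelling $n^a$ via $\Z$-torsion-freeness of $N_S$ (flat over $\W_S(R)$, which embeds in $\prod_{s\in S}R$ via the ghost map). The uniqueness remark via $f=\mathrm{id}_M$ is also exactly how the ``in particular'' follows.
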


\begin{proposition}[Tensor products] 
  For two objects $M,N$ in $\mathcal{C}'_{Q,R}$ we set 
$$
(M\otimes N)_S:=M_S\otimes_{\W_S(R)} N_S, \quad \phi_n:=\phi_{M,n}\otimes \phi_{N,n}.
$$
Then $M\otimes N$ defines an object in $\mathcal{C}'_{Q,R}$.
\begin{proof}
 This is a straightforward calculation.  We can take 
$\beta_{M\otimes N,n}=\beta_{M,n}\otimes \beta_{N,n}$.
%% , because for all $x\in M_{S/n}$, $y\in N_{S/n}$ and $\lambda\in \W_{S/n}(R)$ we have
%%  \begin{align*}
%%    &\beta_{M,n}(\lambda x)\otimes \beta_{N,n}(y)= \beta_{M,n}(x)\otimes \beta_{N,n}(\lambda y) \\
%% &\Leftrightarrow  n\beta_{M,n}(\lambda x)\otimes \beta_{N,n}(y)=n\beta_{M,n}(x)\otimes \beta_{N,n}(\lambda y)\\
%% &\Leftrightarrow \beta_{M,n}(F_n(V_n(\lambda)) x)\otimes \beta_{N,n}(y)=\beta_{M,n}(x)\otimes \beta_{N,n}(F_n(V_n(\lambda)) y)\\
%% &\Leftrightarrow V_n(\lambda) \beta_{M,n}(x)\otimes \beta_{N,n}(y)=\beta_{M,n}(x)\otimes V_n(\lambda)\beta_{N,n}(y).
%%  \end{align*}
%% Moreover, if $\phi_{M,n}\circ \beta_{M,n}=n^a$ and $\phi_{N,n}\circ \beta_{N,n}=n^b$
%% then 
%% \begin{align*}
%% (\phi_{M,n}\otimes \phi_{N,n})\circ (\beta_{M,n}\otimes \beta_{N,n})&=n^{a+b}, \\
%% (\beta_{M,n}\otimes \beta_{N,n})\circ (\lambda \phi_{M,n}\otimes \phi_{N,n})&=V_n(\lambda)V_n(1)n^{a+b-2}=V_n(\lambda)n^{a+b-1}.
%% \end{align*}
\end{proof}
\end{proposition}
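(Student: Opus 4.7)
The plan is to verify each clause of Definition~\ref{definition-phi-N} for $M\otimes N$, taking $\beta_{M\otimes N,n} := \beta_{M,n}\otimes \beta_{N,n}$ and the integer $a := a_M + a_N$, where $a_M, a_N$ are the integers witnessing \eqref{equation-beta_N} for $M$ and $N$ respectively.

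First I would dispose of the structural requirements. A tensor product of two finitely generated projective $\W_S(R)$-modules is again finitely generated projective. The base change isomorphism
\[
\W_T(R)\otimes_{\W_S(R)}\bigl(M_S\otimes_{\W_S(R)}N_S\bigr)\;\cong\; M_T\otimes_{\W_T(R)}N_T
\]
follows by reordering tensor products and applying the corresponding base change for $M$ and $N$ separately. Functoriality in $S$ and compatibility of the restriction maps with the module structure are automatic from the componentwise situation.

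Next I would verify the $\phi_n$-data. The map $\phi_n:=\phi_{M,n}\otimes \phi_{N,n}$ descends to a map $M_S\otimes_{\W_S(R)}N_S \to M_{S/n}\otimes_{\W_{S/n}(R)}N_{S/n}$ because each factor is $F_n$-semilinear, and this semilinearity is inherited by the tensor. The semigroup law $\phi_n\circ \phi_m = \phi_{nm}$ and the commutation with restriction follow at once from the analogous statements for $M$ and $N$.

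Finally, for the $\beta_n$-data the $F_n$-semilinearity and the identity $\phi_n\circ \beta_n = n^{a_M}\otimes n^{a_N} = n^{a_M+a_N} = n^{a}$ both follow componentwise. The only step needing a calculation is the projection formula $\beta_n\bigl(\lambda\cdot \phi_n(x\otimes y)\bigr) = n^{a-1}V_n(\lambda)\cdot (x\otimes y)$. Unpacking the definitions, the left hand side becomes
\[
\beta_{M,n}\bigl(\lambda\,\phi_{M,n}(x)\bigr)\otimes \beta_{N,n}\bigl(\phi_{N,n}(y)\bigr) \;=\; n^{a_M-1}V_n(\lambda)x\;\otimes\; n^{a_N-1}V_n(1)y.
\]
Moving the second $\W_S(R)$-factor across the tensor and invoking
\[
V_n(\lambda)V_n(1) \;=\; V_n\bigl(\lambda\cdot F_nV_n(1)\bigr) \;=\; n\,V_n(\lambda),
\]
which uses only $F_nV_n = n$, turns this into $n^{a-1}V_n(\lambda)(x\otimes y)$. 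This single identity — the non-multiplicativity of $V_n$ corrected by $F_nV_n=n$ — is the sole nontrivial input, and it is the only mild obstacle in the argument; once it is applied, Lemma~\ref{lemma-commute-with-beta} shows that the choice of $\beta_n$ is automatically consistent across morphisms.
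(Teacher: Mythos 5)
Your proof is correct and takes essentially the same approach as the paper, which likewise sets $\beta_{M\otimes N,n}=\beta_{M,n}\otimes\beta_{N,n}$ and leaves the verification (with $a=a_M+a_N$) as a "straightforward calculation"; the identity $V_n(\lambda)V_n(1)=nV_n(\lambda)$ is indeed the one nontrivial correction. One small point worth recording explicitly: since each $\beta_{\bullet,n}$ is only $F_n$-semilinear, the well-definedness of $\beta_{M,n}\otimes\beta_{N,n}$ on $M_{S/n}\otimes_{\W_{S/n}(R)}N_{S/n}$ is not automatic, but follows from the relation $n\,\beta_{M,n}(\mu m)=V_n(\mu)\beta_{M,n}(m)$ (obtained from the projection formula after composing with $\phi_n\beta_n=n^{a_M}$) together with the $\Z$-torsion-freeness of $M_S\otimes_{\W_S(R)}N_S$.
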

The tensor product equips $\mathcal{C}'_{Q,R}$ with the structure of a $\otimes$-category with identity object $\mathbf{1}$, where
$$
\mathbf{1}_S:=\W_S(R), \qquad \phi_{\mathbf{1},n}=F_n.
$$

\begin{definition}(Tate objects)
  Let $b$ be a non-negative integer. We define the object $\mathbf{1}(-b)$ in $\mathcal{C}'_{Q,R}$ by 
$$
\mathbf{1}(-b)_S:=\W_S(R), \qquad \phi_{\mathbf{1}(-b),n}=n^bF_n.
$$ 
\end{definition}

For an object  $M$ in $\mathcal{C}'_{Q,R}$, $M_S$ is $\Z$-torsion-free, hence we get an isomorphism:
$$
\Hom_{\mathcal{C}'_{Q,R}}(M,N)\xr{\cong} \Hom_{\mathcal{C}'_{Q,R}}(M\otimes \mathbf{1}(-b),N\otimes \mathbf{1}(-b))
$$

\begin{definition} 
  We denote by $\mathcal{C}_{Q,R}$ the category with objects $M(b)$, where $M$
is an object in  $\mathcal{C}'_{Q,R}$ and $b\in \Z$. As morphisms we set 
$$
\Hom_{\mathcal{C}_{Q,R}}(M(b_1),N(b_2))=\Hom_{\mathcal{C}'_{Q,R}}(M\otimes \mathbf{1}(b_1-c),N\otimes \mathbf{1}(b_2-c)),
$$
where $c\in \Z$ is such that $b_1-c,b_2-c\leq 0$.
\end{definition}

For two truncation sets $Q\subset Q'$, we have an obvious functor
$$
\mathcal{C}_{Q',R} \xr{} \mathcal{C}_{Q,R}.
$$

The category $\mathcal{C}_{Q,R}$ is  additive and via $M\mapsto M(0)$ the 
category $\mathcal{C}'_{Q,R}$ is a full subcategory of $\mathcal{C}_{Q,R}$. For
$M\in \mathcal{C}'_{Q,R}$, we have $M(-b)=M\otimes \mathbf{1}(-b)$ if $b$ is non-negative. 
For an integer $b$, the functor 
\begin{align*}
  \mathcal{C}_{Q,R}\xr{} \mathcal{C}_{Q,R}, \quad M(n)\mapsto M(n+b)
\end{align*}
is an equivalence and has $M(n)\mapsto M(n-b)$ as inverse functor. 

For $M(b_1),N(b_2)$ in $\mathcal{C}_{Q,R}$ we set 
$$
M(b_1)\otimes N(b_2):=(M\otimes N)(b_1+b_2).
$$
The tensor product equips $\mathcal{C}_{Q,R}$ with the structure of a $\otimes$-category with identity object $\mathbf{1}$.

\subsubsection{Internal Hom}
The reason for introducing the new category $\mathcal{C}_{Q,R}$ is the internal Hom
construction.

Let $M,N$ be two objects in $\mathcal{C}'_{Q,R}$, fix positive integers $a_M,a_N$
and $\beta_{n,M},\beta_{n,N}$ as in (\ref{equation-beta_N}). In a first step we 
are going to define
an object $\iHom'(M,N)$ in  $\mathcal{C}'_{Q,R}$ that depends on the choice of $a_M$. We set
$$
\iHom'(M,N)_S:=\Hom_{\W_S(R)}(M_S,N_S).
$$
We note that  
$$
\Hom_{\W_S(R)}(M_S,N_S)\otimes_{\W_S(R)}\W_T(R)\xr{\cong} \Hom_{\W_T(R)}(M_T,N_T),
$$
since $M_S$ is finitely generated and projective. 
We define 
\begin{align*}
\phi_n:\Hom_{\W_S(R)}(M_S,N_S)&\xr{} \Hom_{\W_{S/n}(R)}(M_{S/n},N_{S/n}) \\ 
\phi_n(f)&:=\phi_n\circ f\circ \beta_n.
\end{align*}
This definition depends on $a_M$. It is easy to check that $\iHom'(M,N)$
is an object in $\mathcal{C}'_{Q,R}$ (take $\beta_n(f):=\beta_n\circ f\circ \phi_n$
and $a=a_M+a_N$). We set 
\begin{equation}\label{equation-definition-iHom}
  \iHom(M,N):=\iHom'(M,N)(a_M) 
\end{equation}
as an object in $\mathcal{C}_{Q,R}$. In view of Lemma \ref{lemma-commute-with-beta}
this definition is independent of any choices. For two objects $M(b_1),N(b_2)$
in $\mathcal{C}_{Q,R}$ we set
$$
\iHom(M(b_1),N(b_2)):= \iHom(M,N)(b_2-b_1).
$$

\subsubsection{}
For three objects $M,N,P$ in $\mathcal{C}_{Q,R}$ we have an obvious natural 
isomorphism 
$$
\iHom(M\otimes N,P) = \iHom(M,\iHom(N,P)).
$$
\begin{proposition}
For objects $M,N$ in   $\mathcal{C}_{Q,R}$  we have a natural isomorphism
$$
\Hom(\mathbf{1},\iHom(M,N))\xr{} \Hom(M,N).
$$ 
\begin{proof}
We may assume that $M,N\in \mathcal{C}'_{Q,R}$.
  Fix $a_M$ and $\beta_{M,n}$ as in (\ref{equation-beta_N}). We need to show 
that 
$$
\Hom(\mathbf{1}(-a_M),\iHom'(M,N))=\Hom(M,N),
$$
and know that 
\begin{multline*}
\Hom(\mathbf{1}(-a_M),\iHom'(M,N))=\{[S\mapsto f_S]\mid f_S\otimes_{\W_S(R)}\W_T(R)=f_T\;\text{for $T\subset S\subset Q$}, \\ 
\phi_{N,n}\circ f_S\circ \beta_{M,n} = n^{a_M}f_{S/n} \quad \text{for all $n, S\subset Q$.}\}   
\end{multline*}
Since $\phi_{M,n}(M_S)\supset n^{a_M}M_{S/n}$ we have 
\begin{align*}
  \phi_{N,n}\circ f_S\circ \beta_{M,n} = n^{a_M}f_{S/n}  
&\Leftrightarrow \phi_{N,n}\circ f_S\circ \beta_{M,n}\circ \phi_{M,n} = n^{a_M}f_{S/n}\circ \phi_{M,n} \\
&\Leftrightarrow \phi_{N,n}\circ f_S\circ n^{a_{M}-1} V_n(1) = n^{a_M}f_{S/n}\circ \phi_{M,n}\\
&\Leftrightarrow n^{a_{M}}\phi_{N,n}\circ f_S = n^{a_M}f_{S/n}\circ \phi_{M,n}\\
&\Leftrightarrow \phi_{N,n}\circ f_S = f_{S/n}\circ \phi_{M,n}.
\end{align*}
\end{proof}
\end{proposition}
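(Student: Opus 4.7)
The plan is to unwind the definitions and reduce the identification to a straightforward compatibility chase, with the one substantive step being that the Frobenius compatibility in $\iHom'$ is equivalent to the Frobenius compatibility defining a morphism $M \to N$.

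First I would reduce to the case $M, N \in \mathcal{C}'_{Q,R}$. Since twisting by $\mathbf{1}(b)$ is an equivalence of $\mathcal{C}_{Q,R}$ with a quasi-inverse given by $\mathbf{1}(-b)$, and since $\Hom_{\mathcal{C}_{Q,R}}(M(b_1), N(b_2))$ is by definition $\Hom_{\mathcal{C}'_{Q,R}}(M \otimes \mathbf{1}(b_1-c), N \otimes \mathbf{1}(b_2-c))$ for $c$ small enough, one reduces everything to the untwisted case. By construction $\iHom(M,N) = \iHom'(M,N)(a_M)$, so
$$\Hom_{\mathcal{C}_{Q,R}}(\mathbf{1}, \iHom(M,N)) = \Hom_{\mathcal{C}'_{Q,R}}(\mathbf{1}(-a_M), \iHom'(M,N)).$$

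Next I would unpack this last Hom set. A morphism $\mathbf{1}(-a_M) \to \iHom'(M,N)$ consists of a family of $\W_S(R)$-linear maps $f_S \colon \W_S(R) \to \Hom_{\W_S(R)}(M_S, N_S)$ compatible with restriction along $T \subset S$ and commuting with $\phi_n$ (with $\phi_n$ on the target being $f \mapsto \phi_{N,n} \circ f \circ \beta_{M,n}$, and on the source $n^{a_M} F_n$). Such a family is determined by the global sections $g_S := f_S(1) \in \Hom_{\W_S(R)}(M_S, N_S)$, and the restriction compatibility translates verbatim into the restriction compatibility required for a morphism $M \to N$ in $\mathcal{C}'_{Q,R}$. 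So the entire content sits in showing that the Frobenius condition
$$\phi_{N,n} \circ g_S \circ \beta_{M,n} = n^{a_M} g_{S/n}$$
is equivalent to
$$\phi_{N,n} \circ g_S = g_{S/n} \circ \phi_{M,n}.$$

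The key step, and the one that might look like an obstacle at first, is this equivalence of Frobenius conditions. The implication ``$\Leftarrow$'' comes by post-composing with $\beta_{M,n}$ and using the identity $\phi_{M,n} \circ \beta_{M,n} = n^{a_M}$ (together with $\beta_{M,n} \circ \phi_{M,n} = n^{a_M - 1} V_n(1)$). The harder direction ``$\Rightarrow$'' is not harder in spirit, but requires cancellation of a factor $n^{a_M}$: one composes the first condition on the right with $\phi_{M,n}$ and rewrites $\beta_{M,n} \circ \phi_{M,n} = n^{a_M - 1} V_n(1)$, obtaining $n^{a_M} \phi_{N,n} \circ g_S = n^{a_M} g_{S/n} \circ \phi_{M,n}$. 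Now one uses $\Z$-torsion-freeness to cancel $n^{a_M}$; this is where it matters that $R$ is $\Z$-torsion-free (so $\W_S(R)$ embeds via the ghost map into $\prod_s R$) and that $M_S$ is projective, hence $\Z$-torsion-free. This cancellation is the only genuine input, and once it is in hand the natural isomorphism $\Hom(\mathbf{1}, \iHom(M,N)) \cong \Hom(M,N)$ follows immediately from the evaluation-at-$1$ bijection.
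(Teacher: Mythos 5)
Your proof is correct and follows essentially the same route as the paper: reduce to $M,N\in\mathcal{C}'_{Q,R}$, identify the Hom set via evaluation at $1\in\W_S(R)$, and convert the $\iHom'$-Frobenius condition into $\phi_{N,n}\circ g_S = g_{S/n}\circ\phi_{M,n}$ by composing with $\phi_{M,n}$, invoking $\beta_{M,n}\circ\phi_{M,n}=n^{a_M-1}V_n(1)$, and cancelling $n^{a_M}$ by $\Z$-torsion-freeness. The only cosmetic difference is that you treat the two implications separately (using $\phi_{M,n}\circ\beta_{M,n}=n^{a_M}$ directly for the easy direction) whereas the paper presents a single chain of biconditionals that also invokes the containment $\phi_{M,n}(M_S)\supset n^{a_M}M_{S/n}$; the substance is the same.
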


For $M\in \mathcal{C}_{Q,R}$ we define the dual by 
$$
M^{\vee}:=\iHom(M,\mathbf{1}).
$$
It equips $\mathcal{C}_{Q,R}$ with the structure of rigid $\otimes$-category. We have
$$
M^{\vee}\otimes N = \iHom(M,N).
$$

\subsubsection{Functoriality}

\begin{proposition}\label{proposition-functoriality}
 Let $R\xr{} A$ be a ring homomorphism between $\Z$-torsion-free rings. 
The assignment
\begin{align*}
&[S\mapsto M_S]\mapsto [S\mapsto M_S\otimes_{\W_S(R)}\W_S(A)], \quad [n\mapsto \phi_n]\mapsto [n\mapsto \phi_n\otimes F_n],\\
&[S\mapsto f_S]\mapsto [S\mapsto f_S\otimes id_{\W_S(A)}]  
\end{align*}
defines a functor
$$
\mathcal{C}'_{Q,R}\xr{}  \mathcal{C}'_{Q,A}.
$$
The functor can be extended in the obvious way to a functor
$
\mathcal{C}_{Q,R}\xr{}  \mathcal{C}_{Q,A}.
$ 
\begin{proof}
  Straightforward.
\end{proof}
\end{proposition}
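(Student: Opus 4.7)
My plan is to verify each axiom of $\mathcal{C}'_{Q,A}$ for the stated assignment. For an object $M = [S \mapsto M_S]$ of $\mathcal{C}'_{Q,R}$, I set $(M_A)_S := M_S \otimes_{\W_S(R)} \W_S(A)$, with restriction maps induced from $M$. Base change preserves being finitely generated and projective, and the identity $(M_A)_T \cong \W_T(A) \otimes_{\W_S(A)} (M_A)_S$ for $T \subset S$ follows from associativity of tensor products together with the corresponding identity for $M$. The Frobenius $\phi_n^{\mathrm{new}} := \phi_n \otimes F_n$ is well-defined, using the $\W_S(R)$-linearity of $\phi_n$ via $F_n$ and the functorial compatibility $F_n \circ \rho = \rho \circ F_n$, where $\rho$ denotes the induced ring homomorphisms $\W_S(R) \to \W_S(A)$ and $\W_{S/n}(R) \to \W_{S/n}(A)$. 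The axioms $\phi_n^{\mathrm{new}} \circ \phi_m^{\mathrm{new}} = \phi_{nm}^{\mathrm{new}}$ and compatibility with restriction transfer immediately.

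The main step is the construction of a Verschiebung family $\beta_n^{\mathrm{new}}$. I would fix a positive integer $a$ and morphisms $\{\beta_n\}$ for $M$ as in (\ref{equation-beta_N}), and take $a_{\mathrm{new}} := a + 1$; then set
\[
\beta_n^{\mathrm{new}} : (M_A)_{S/n} \to (M_A)_S, \qquad m \otimes a' \mapsto \beta_n(m) \otimes V_n(a').
\]
Well-definedness demands the $\W_{S/n}(R)$-balancedness identity
\[
\beta_n(r' m) \otimes V_n(a') = \beta_n(m) \otimes V_n(\rho(r') a') \qquad \text{in } M_S \otimes_{\W_S(R)} \W_S(A),
\]
for all $r' \in \W_{S/n}(R)$. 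For $m = \phi_n(m')$ in the image of $\phi_n$, the projection formula for $\beta_n$ gives $\beta_n(r' \phi_n(m')) = n^{a-1} V_n(r') m'$, and the desired identity reduces, via the functoriality $V_n \circ \rho = \rho \circ V_n$ and the Witt vector identity $V_n(x) V_n(y) = n V_n(xy)$, to the equality $V_n(\rho(r')) V_n(a') = n V_n(\rho(r') a') = V_n(1) \, V_n(\rho(r') a')$ in $\W_S(A)$. For general $m \in M_{S/n}$ one uses that $n^a m \in \phi_n(M_S)$ (from $\phi_n \beta_n = n^a$): the identity then holds after multiplication by $n^a$, and the $\Z$-torsion-freeness of $(M_A)_S$ --- which follows from projectivity of $M_S$ over $\W_S(R)$ together with $\Z$-torsion-freeness of $\W_S(A)$ --- allows us to cancel the factor $n^a$.

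With $\beta_n^{\mathrm{new}}$ in place, the remaining axioms reduce to direct calculations: $\W_S(A)$-linearity via $F_n$ inherits from that of $V_n$; the identity $\phi_n^{\mathrm{new}} \circ \beta_n^{\mathrm{new}} = n^{a+1}$ comes from $\phi_n \beta_n = n^a$ and $F_n V_n = n$; and the projection formula follows from those of $\beta_n$ and $V_n$ combined again with $V_n(x) V_n(y) = n V_n(xy)$. A morphism $f : M \to N$ in $\mathcal{C}'_{Q,R}$ is sent to $f \otimes \mathrm{id}$, which is manifestly $\W_S(A)$-linear and commutes with $\phi_n^{\mathrm{new}}$ by functoriality of the tensor product; compatibility with composition and identities in $\mathcal{C}'_{Q,R}$ is then clear, and the extension to $\mathcal{C}_{Q,R} \to \mathcal{C}_{Q,A}$ via Tate twists is automatic since tensoring with $\mathbf{1}(b)$ commutes with base change. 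The main obstacle is the well-definedness of $\beta_n^{\mathrm{new}}$, where the factor of $n$ hidden in the identity $V_n(x) V_n(y) = n V_n(xy)$ is the critical combinatorial input that makes the two sides match.
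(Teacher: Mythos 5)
The paper's proof is simply "Straightforward," so there is no explicit argument in the paper to compare against. Your verification is correct and supplies what a full proof requires. The base-change $(M_A)_S := M_S \otimes_{\W_S(R)} \W_S(A)$ with $\phi_n^{\mathrm{new}} := \phi_n \otimes F_n$ is indeed well-defined (the balancedness of $(m,a)\mapsto \phi_n(m)\otimes F_n(a)$ over $\W_S(R)$ follows from $F_n$-semilinearity of $\phi_n$ and multiplicativity of $F_n$), base change preserves finite generation, projectivity and the restriction isomorphisms, and your $\Z$-torsion-freeness argument for $(M_A)_S$ from projectivity of $M_S$ and $\Z$-torsion-freeness of $\W_S(A)$ is right.

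The one genuinely nontrivial point is exactly the one you isolate: constructing $\beta_n^{\mathrm{new}}$. Your choice $\beta_n^{\mathrm{new}}(m\otimes a') = \beta_n(m)\otimes V_n(a')$ with $a_{\mathrm{new}} = a+1$ is the natural one, and your well-definedness argument — check balancedness on $\phi_n(M_S)$ via the projection formula for $\beta_n$ and the identity $V_n(x)V_n(y)=nV_n(xy)$, then extend to all of $M_{S/n}$ by multiplying by $n^a = \phi_n\circ\beta_n$ and cancelling by torsion-freeness — is correct. I verified the computation $n^{a-1}V_n(\rho(r'))V_n(a') = n^a V_n(\rho(r')a') = n^{a-1}V_n(1)V_n(\rho(r')a')$ and the subsequent checks ($F_n$-semilinearity of $\beta_n^{\mathrm{new}}$ via the Witt projection formula, $\phi_n^{\mathrm{new}}\beta_n^{\mathrm{new}} = n^{a+1}$ from $\phi_n\beta_n = n^a$ and $F_nV_n = n$, and the projection formula with exponent $a_{\mathrm{new}}-1 = a$); they all go through. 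The observation that the exponent shifts from $a$ to $a+1$ is exactly the kind of bookkeeping the paper's "Straightforward" hides, and the remark before Lemma \ref{lemma-commute-with-beta} that the $\beta_n$ are not part of the datum is what makes this shift harmless. The extension to $\mathcal{C}_{Q,R}\to\mathcal{C}_{Q,A}$ via Tate twists is immediate since $\mathbf{1}(b)_S\otimes_{\W_S(R)}\W_S(A) = \W_S(A)$ with the same twist on $\phi_n$. No gaps.
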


\subsubsection{}
Our motivation for introducing $\mc{C}_{Q,R}$ comes from geometry.

\begin{proposition}\label{proposition-big-de-Rham-Witt-phi-module}
 Assumptions as in Theorem \ref{thm-projective-blue}. Let $Q$ be a non-empty truncation set. For all $i\geq 0$ the assignment 
$$
S\mapsto H^i_{dRW}(X/\W_S(R)), \quad n\mapsto \phi_n,
$$
defines an object in $\mc{C}'_{Q,R}$.
\begin{proof}
  Theorem  \ref{thm-projective-blue} implies that these modules are projective
and finitely generated. For the construction of $\phi_n$ and $\beta_n$ see Section \ref{section-phi-beta-de-Rham-Witt}. 
\end{proof}
\end{proposition}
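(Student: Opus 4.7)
The strategy is to verify the axioms of Definition \ref{definition-phi-N} one by one, deducing each from the already-constructed structure on the sheaf of complexes $\W_S\Omega^*_{X/R}$ together with Theorem \ref{thm-projective-blue}. Since the sheaf $\W_S\Omega^q_{X/R}$ lives on the closed immersion $\imath_{T,S}:\W_T(X)\inj \W_S(X)$ for $T\subset S$, and $\imath_{T,S*}$ is exact, hypercohomology converts the sheaf-level structures of Section \ref{section-phi-beta-de-Rham-Witt} into maps of $\W_S(R)$-modules
\[
H^i_{dRW}(X/\W_S(R))\xr{} H^i_{dRW}(X/\W_T(R)),\qquad
\phi_n,\beta_n:H^i_{dRW}(X/\W_S(R))\rightleftarrows H^i_{dRW}(X/\W_{S/n}(R)).
\]

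First I would dispose of the "structural" axioms: finite generation and projectivity of $H^i_{dRW}(X/\W_S(R))$, as well as the base-change isomorphism $\W_T(R)\otimes_{\W_S(R)}H^i_{dRW}(X/\W_S(R))\xr{\cong} H^i_{dRW}(X/\W_T(R))$, are exactly the content of Theorem \ref{thm-projective-blue}. The contravariant functoriality $S\mapsto H^i_{dRW}(X/\W_S(R))$ and the compatibility of the restriction maps with the $\W_S(R)$-module structure (via the projection $\W_S(R)\xr{}\W_T(R)$) follow from the corresponding compatibilities of the sheaf-level restriction $\W_S\Omega^*_{X/R}\xr{}\imath_{T,S*}\W_T\Omega^*_{X/R}$.

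Next I would verify the $\phi_n$-axioms. The semi-linearity $\phi_n(\lambda\cdot x)=F_n(\lambda)\cdot \phi_n(x)$ is inherited from the corresponding identity for $F_n$ on the Witt complex (since the integer prefactor $n^q$ commutes with everything). The composition law $\phi_n\circ\phi_m=\phi_{nm}$ reduces on $\W_S\Omega^q_{X/R}$ to
\[
(n^qF_n)\circ(m^qF_m)=n^qm^qF_nF_m=(nm)^qF_{nm},
\]
where we use that $F_n$ is a ring homomorphism, hence commutes with multiplication by the integer $m^q$. Compatibility of $\phi_n$ with restriction $T\subset S$ is immediate from the commutative diagram in Section \ref{section-phi-beta-de-Rham-Witt} relating $F_n$ to restriction.

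Finally, for the $\beta_n$-axioms I would take $a=d+1$, where $d$ is the relative dimension of $X$ over $R$ (which is bounded because $\W_S\Omega^q_{X/R}=0$ for $q>d$ by Proposition \ref{proposition-WSOmega-torsionfree}(ii)). The maps $\beta_n=n^{d-q}V_n$ of Section \ref{section-phi-beta-de-Rham-Witt} induce, on hypercohomology, the required maps $\beta_n:M_{S/n}\xr{}M_S$; $F_n$-semi-linearity is again inherited from the Witt-complex level. The two remaining identities
\[
\phi_n\circ\beta_n=n^{d+1},\qquad \beta_n(\lambda\cdot\phi_n(x))=n^{d}V_n(\lambda)\cdot x,
\]
are displayed explicitly at the sheaf level at the end of Section \ref{section-phi-beta-de-Rham-Witt}, and transfer verbatim to hypercohomology. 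I do not expect any serious obstacle: everything is a formal consequence of the sheaf-level Witt-complex identities plus the deep input of Theorem \ref{thm-projective-blue}; the only mild subtlety is to make sure the dimension $d$ is well-defined and constant, which is fine since $X/R$ is smooth and (after reducing to the case $R$ integral) connected of some relative dimension $d$.
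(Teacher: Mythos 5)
Your proposal is correct and follows the same route as the paper's own (very brief) proof: Theorem~\ref{thm-projective-blue} supplies finite generation, projectivity, and the base-change isomorphism, while the sheaf-level operators $\phi_n=n^qF_n$ and $\beta_n=n^{d-q}V_n$ from Section~\ref{section-phi-beta-de-Rham-Witt} transfer to hypercohomology via exactness of $\imath_{T,S*}$ and furnish the remaining axioms with $a=d+1$. Your explicit verification of the axioms of Definition~\ref{definition-phi-N}, including the observation that $d$ can be taken uniform (via Proposition~\ref{proposition-WSOmega-torsionfree}(ii) for properness), matches what the paper leaves implicit.
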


\begin{definition}\label{definition-bd-R}
  Let $X\xr{} \Spec(R)$ be a morphism such that the assumptions of Theorem \ref{thm-projective-blue} are satisfied. For all $i$, we denote by $H^i_{dRW}(X/\W(R))$ the object in $\mc{C}_{R}$
that is given by $S\mapsto H^i_{dRW}(X/\W_S(R))$ (Proposition \ref{proposition-big-de-Rham-Witt-phi-module}).
We call $H^*_{dRW}(X/\W(R))$ the \emph{de Rham-Witt cohomology} of $X$.
\end{definition} 

\subsubsection{}
Let $X,Y$ be smooth proper schemes over $R$ such that the assumptions of Theorem 
\ref{thm-projective-blue} are satisfied for $X$ and $Y$. 
The multiplication  
$$
R\Gamma(\W_S\Omega^*_{X/R})\times R\Gamma(\W_S\Omega^*_{Y/R})\xr{} R\Gamma(\W_S\Omega^*_{X\times_R Y/R})
$$ 
induces a morphism in $\mc{C}_R$:
\begin{equation}\label{equation-product-varities-C-R}
H^i_{dRW}(X/\W(R))\otimes H^{j}_{dRW}(Y/\W(R))\xr{}  H^{i+j}_{dRW}(X\times_R Y/\W(R)).
\end{equation}

\subsection{The tangent space functor} %and $\mathcal{E}_K$-modules}

We have a functor of rigid $\otimes$-categories
\begin{align*}
&T:\mathcal{C}_{Q,R} \xr{} \text{(finitely generated and projective $R$-modules)} \\ 
   &T(M(n)) := M_{\{1\}}.
\end{align*}

\begin{proposition}\label{proposition-T-conservative}
 The functor $T$ is conservative, i.e.~if $T(f)$ is an isomorphism then $f$ is an isomorphism.  
  \begin{proof}
    It is sufficient to consider a morphism $f:M\xr{} N$ in $\mathcal{C}'_{Q,R}$.
We need to show that $f_S:M_S\xr{} N_{S}$ is an isomorphism provided that $f_{\{1\}}$ is an isomorphism. We may choose a positive integer $a$ and $\beta_{M,n}$, $\beta_{N,n}$ as in (\ref{equation-beta_N}). By Lemma \ref{lemma-commute-with-beta} the 
morphism $f$ commutes with $\beta_n$.

Let $n:=\max\{s\mid s\in S\}$; by induction we know that $f_T$ is an isomorphism for
$T=S\backslash \{n\}$. 
Set $I=\ker(\W_S(R)\xr{} \W_T(R))$, we know that $I=\{V_n(\lambda)\mid \lambda\in R\}$.
It suffices to show that 
\begin{equation}\label{equation-fS-restricted}
  IM_S\xr{f_S} IN_S 
\end{equation}
is an isomorphism. If $f_S(V_n(\lambda)x)=0$ then $n^{a-1}V_n(\lambda)f_S(x)=0$ and therefore
$\beta_n(\lambda\cdot \phi_nf_S(x))=\beta_n(\lambda\cdot f_{\{1\}}(\phi_n(x)))$ vanishes. Since $\beta_n$ is injective, we conclude 
$\lambda \cdot \phi_n(x)=0$, hence $$0=\beta_n(\lambda\cdot\phi_n(x))=n^{a-1}V_n(\lambda)x,$$ which implies $V_n(\lambda)x=0$.

For the surjectivity of (\ref{equation-fS-restricted}) we note that, by induction, for every 
$y\in N_S$ there is $x\in M_S$ with $f_S(x)-y\in IN_S$. Therefore it suffices
to show that $I^{a}N_S$ is contained in the image of $f_S$. Now,  
$$
V_n(\lambda_1)\cdots V_n(\lambda_a)=n^{a-1}V_n(\lambda_1\cdots\lambda_a).
$$
Thus 
$$
V_n(\lambda_1)\cdots V_n(\lambda_a)y=f_S(\beta_nf^{-1}_{\{1\}}(\lambda_1\cdots\lambda_a\cdot\phi_n(y))).
$$  
  \end{proof}
\end{proposition}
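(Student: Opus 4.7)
The plan is to reduce to a morphism $f: M \to N$ in $\mathcal{C}'_{Q,R}$ with $f_{\{1\}}$ an isomorphism, fix data $a$ and $\beta_{M,n}, \beta_{N,n}$ as in \eqref{equation-beta_N}, and prove that $f_S$ is an isomorphism by induction on the cardinality of the finite truncation set $S \subset Q$. The base case $S = \{1\}$ is the hypothesis. For the inductive step, choose $n := \max S$ and set $T := S \setminus \{n\}$, which is a truncation set (no other element of $S$ is divisible by $n$) and satisfies $S/n = \{1\}$. Apply \eqref{equation-short-exact-seq-S/n-S-T} to get the short exact sequence $0 \to \W_{\{1\}}(R) \xr{V_n} \W_S(R) \to \W_T(R) \to 0$; tensoring over $\W_S(R)$ with the flat modules $M_S$ and $N_S$ (and using the identification $M_S\otimes_{\W_S(R)}\W_T(R)\cong M_T$) yields two short exact sequences with $f_S$ as a morphism between them. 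By induction $f_T$ is an isomorphism, so by the five lemma it suffices to prove that the restriction $f_S: IM_S \to IN_S$ is an isomorphism, where $I := V_n(R) \subset \W_S(R)$.

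A first useful observation is that $I$ is principal, generated by $V_n(1)$: indeed, $V_n(1)\cdot \mu = V_n(F_n(\mu))$ for $\mu \in \W_S(R)$, so $V_n(1)\cdot \W_S(R) = V_n(F_n(\W_S(R))) = V_n(R) = I$. Hence every element of $IM_S$ can be written as $V_n(1)\cdot y$ for some $y \in M_S$. The key identities I will use repeatedly are $\beta_n \circ \phi_n = n^{a-1}V_n(1)$ and $\phi_n \circ \beta_n = n^{a}$ on the relevant modules, together with Lemma~\ref{lemma-commute-with-beta}, which guarantees that $f$ commutes with both $\phi_n$ and $\beta_n$. For injectivity on $IM_S$: if $f_S(V_n(1)\cdot y) = 0$, then $n^{a-1}V_n(1)\cdot f_S(y) = \beta_n\phi_n f_S(y) = \beta_n(f_{\{1\}}(\phi_n(y))) = 0$; since $\phi_n\beta_n = n^a$ on a $\Z$-torsion-free module, $\beta_n$ is injective, whence $f_{\{1\}}(\phi_n(y)) = 0$, so $\phi_n(y)=0$, so $\beta_n\phi_n(y) = n^{a-1}V_n(1)\cdot y = 0$, and torsion-freeness gives $V_n(1)\cdot y = 0$.

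For surjectivity I will proceed in two moves. First, I show that $I^a N_S$ lies in the image of $f_S$: using $V_n(\lambda_1)\cdots V_n(\lambda_a) = n^{a-1}V_n(\lambda_1\cdots\lambda_a)$ one sees that an arbitrary element of $I^a N_S$ has the form $n^{a-1}V_n(\mu)\cdot z = \beta_n(\mu\cdot \phi_n(z))$ with $\mu\in R$, $z\in N_S$; setting $w := f_{\{1\}}^{-1}(\mu\cdot \phi_n(z)) \in M_{\{1\}}$ and using that $f$ commutes with $\beta_n$ gives $f_S(\beta_n(w)) = \beta_n(\mu \cdot \phi_n(z))$, exhibiting the preimage. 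Second, the inductive hypothesis gives $N_S = f_S(M_S) + IN_S$, and iterating yields $N_S = f_S(M_S) + I^k N_S$ for every $k \geq 1$; taking $k = a$ and combining with the first move shows $N_S = f_S(M_S)$. In particular, any $w \in IN_S$ is $f_S(y)$ for some $y\in M_S$, and reducing modulo $I$ together with the (inductive) injectivity of $f_T$ forces $y \in IM_S$, giving the desired surjectivity $IM_S \twoheadrightarrow IN_S$.

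The main obstacle is the surjectivity step: it is not possible to lift $V_n(1)\cdot z \in IN_S$ directly through $\beta_n$ (one only obtains $n^{a-1}V_n(1)\cdot z$ in the image this way), which is why one must combine the partial statement ``$I^a N_S \subset f_S(M_S)$'' with the iterated use of the inductive hypothesis to close the argument. Everything else, including the five lemma finish on the full snake, reduces to the torsion-free assumption on $\W_S(R)$ guaranteed by the hypothesis that $R$ is $\Z$-torsion-free.
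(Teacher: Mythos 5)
Your argument follows the same line as the paper's proof: reduce to a morphism in $\mathcal{C}'_{Q,R}$, induct on the size of $S$ with $n=\max S$ and $T=S\setminus\{n\}$, use the short exact sequence \eqref{equation-short-exact-seq-S/n-S-T} together with flatness of $M_S$ to reduce to the restriction $f_S\colon IM_S\to IN_S$, prove injectivity using $\beta_n$ and $\Z$-torsion-freeness, and prove surjectivity by iterating the inductive step until the only remaining task is to show $I^aN_S\subset f_S(M_S)$, which is then handled via $\beta_n$ and $f_{\{1\}}^{-1}$.

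There is, however, a genuine error in your ``first useful observation.'' The ideal $I=V_n(R)$ is not in general principal with generator $V_n(1)$: the identity $V_n(1)\cdot\mu=V_n(F_n(\mu))$ gives $V_n(1)\cdot\W_S(R)=V_n(F_n(\W_S(R)))$, but $F_n\colon\W_S(R)\to\W_{S/n}(R)=R$ need not be surjective. Concretely, take $R=\Z[x]$, $S=\{1,2\}$, $n=2$: the image of $F_2\colon\W_{\{1,2\}}(\Z[x])\to\Z[x]$ is $\{a_1^2+2a_2\mid a_1,a_2\in\Z[x]\}$, and reducing modulo $2$ shows it lies in $\F_2[x^2]$, so it does not contain $x$. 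Consequently $V_n(1)M_S$ may be a proper submodule of $IM_S$, and your injectivity argument, which only treats elements of the special form $V_n(1)\cdot y$, does not cover all of $IM_S$. The paper sidesteps this by working with $V_n(\lambda)x$ for arbitrary $\lambda\in R$, using the axiom $\beta_n(\lambda\cdot\phi_n(x))=n^{a-1}V_n(\lambda)x$ from \eqref{equation-beta_N}; a general element of $IM_S$ is a finite sum $\sum_i V_n(\lambda_i)x_i$, and the injectivity argument extends to such sums by additivity of $f_S$, $\phi_n$ and $\beta_n$. (Your description of an element of $I^aN_S$ as a single term $n^{a-1}V_n(\mu)z$ has the same cosmetic issue, shared with the paper's own write-up, and is likewise resolved by linearity.) The rest of your argument, in particular the iteration $N_S=f_S(M_S)+I^kN_S$ and the final passage back to $IN_S\subset f_S(IM_S)$, is correct.
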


\begin{corollary}\label{corollary-kuenneth-over-R}
Let $X,Y$ be smooth proper schemes over $R$ such that the assumptions of Theorem \ref{thm-projective-blue} are satisfied for $X$ and $Y$. 
If 
$$
\bigoplus_{i+j=n} H^i_{dR}(Y/R)\otimes_R H^j_{dR}(X/R)\xr{} H^n_{dR}(X\times_R Y/R)
$$
is an isomorphism then 
$$
\bigoplus_{i+j=n} H^i_{dRW}(X/\W(R))\otimes H^j_{dRW}(Y/\W(R))\xr{} H^n_{dRW}(X\times_R Y/\W(R))
$$
(see \eqref{equation-product-varities-C-R}) is an isomorphism in $\mc{C}_{R}$.
\begin{proof}
  This is an application of Proposition \ref{proposition-T-conservative}, because
$$
T(H^i_{dRW}(-/\W(R)))=H^i_{dR}(-/R).
$$
\end{proof}
\end{corollary}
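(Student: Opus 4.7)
The strategy is to reduce the statement to the hypothesis on de Rham cohomology by invoking the conservativity of the tangent space functor $T\colon\mc{C}_R\to(\text{finitely generated projective $R$-modules})$ established in Proposition~\ref{proposition-T-conservative}. Let $\kappa_{X,Y}$ denote the Künneth morphism in $\mc{C}_R$ obtained from \eqref{equation-product-varities-C-R} by taking the direct sum over $i+j=n$. Since $T$ is (by construction) a $\otimes$-functor and commutes with finite direct sums, applying $T$ to $\kappa_{X,Y}$ yields a morphism between $\bigoplus_{i+j=n} T(H^i_{dRW}(X/\W(R)))\otimes_R T(H^j_{dRW}(Y/\W(R)))$ and $T(H^n_{dRW}(X\times_R Y/\W(R)))$.

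First I would identify $T(H^i_{dRW}(Z/\W(R)))$ with $H^i_{dR}(Z/R)$ for any $R$-scheme $Z$ satisfying the hypotheses of Theorem~\ref{thm-projective-blue}. By construction $T$ evaluates at the truncation set $\{1\}$, and $\W_{\{1\}}(R)=R$ together with $\W_{\{1\}}\Omega^*_{Z/R}=\Omega^*_{Z/R}$ gives $H^i_{dRW}(Z/\W_{\{1\}}(R))=H^i_{dR}(Z/R)$. Next I would verify that $T(\kappa_{X,Y})$ is precisely the classical de Rham Künneth morphism. This is a naturality statement: the pairing
\[
R\Gamma(\W_S\Omega^*_{X/R})\otimes R\Gamma(\W_S\Omega^*_{Y/R})\xr{} R\Gamma(\W_S\Omega^*_{X\times_R Y/R})
\]
is functorial in the truncation set $S$, and at $S=\{1\}$ it recovers the usual cup-product on de Rham cohomology. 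Given the hypothesis that the classical Künneth map is an isomorphism of $R$-modules, $T(\kappa_{X,Y})$ is an isomorphism, and Proposition~\ref{proposition-T-conservative} then implies that $\kappa_{X,Y}$ itself is an isomorphism in $\mc{C}_R$.

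The only non-formal point is the compatibility of the cup-product construction with restriction from a finite truncation set $S$ to $\{1\}$, i.e.\ that the square
\[
\xymatrix{
R\Gamma(\W_S\Omega^*_{X/R})\otimes R\Gamma(\W_S\Omega^*_{Y/R}) \ar[r]\ar[d] & R\Gamma(\W_S\Omega^*_{X\times_R Y/R}) \ar[d] \\
R\Gamma(\Omega^*_{X/R})\otimes R\Gamma(\Omega^*_{Y/R}) \ar[r] & R\Gamma(\Omega^*_{X\times_R Y/R})
}
\]
commutes. This is a routine diagram chase using the projection $\W_S\Omega^*_{-/R}\xr{} \Omega^*_{-/R}$ and functoriality of $\W_S$ in $S$, so no real obstacle arises; the entire content of the proof lies in the conservativity result established earlier.
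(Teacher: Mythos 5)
Your proposal is correct and follows exactly the same approach as the paper's one-line proof: apply the conservativity of the tangent space functor $T$ from Proposition~\ref{proposition-T-conservative}, after identifying $T(H^i_{dRW}(-/\W(R)))$ with $H^i_{dR}(-/R)$ via evaluation at $S=\{1\}$. You have simply filled in the routine verification that $T$ sends the de Rham--Witt K\"unneth morphism to the classical de Rham one.
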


\begin{proposition}\label{proposition-T-faithful}
  The functor $T$ is faithful.  
  \begin{proof}
   It is sufficient to consider a morphism $f:M\xr{} N$ in $\mathcal{C}'_{Q,R}$.
We need to show that $f_S:M_S\xr{} N_{S}$ vanishes provided that $f_{\{1\}}$ is zero. We may choose a positive integer $a$ and $\beta_{M,n}$, $\beta_{N,n}$ as in (\ref{equation-beta_N}). By Lemma \ref{lemma-commute-with-beta} the 
morphism $f$ commutes with $\beta_n$.

Let $n:=\max(S)$; by induction we know that $f_T=0$  for
$T=S\backslash \{n\}$, so that for all $x\in M_S$ the image $f_S(x)$ is of the form 
$f_S(x)=V_n(\lambda)y$. Since
$$
0=f_{\{1\}}\circ \phi_n(x)=\phi_n\circ f_S(x)=n\cdot \lambda \cdot \phi_n(y),
$$
we conclude $\lambda\cdot \phi_n(y)=0$ and $n^{a-1}V_n(\lambda)y=0$, hence $f_S(x)=0$.
\end{proof}
\end{proposition}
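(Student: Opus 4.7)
The plan is to mirror the proof of Proposition \ref{proposition-T-conservative} via induction on $|S|$. First, I would reduce to a morphism $f : M \to N$ in $\mathcal{C}'_{Q,R}$. This is permissible because a morphism $M(b_1) \to N(b_2)$ in $\mathcal{C}_{Q,R}$ is by definition a morphism between appropriate Tate twists in $\mathcal{C}'_{Q,R}$, and the underlying $\W_S(R)$-modules of $M \otimes \mathbf{1}(c)$ are just $M_S$ at every $S$; in particular $T(f) = 0$ is the assumption $f_{\{1\}} = 0$, and the goal becomes $f_S = 0$ for every finite truncation set $S \subset Q$. Fix a positive integer $a$ and $\beta_{M,n}, \beta_{N,n}$ as in \eqref{equation-beta_N}; by Lemma \ref{lemma-commute-with-beta}, $f$ commutes with each $\beta_n$.

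The base case $S = \{1\}$ is the hypothesis. For the inductive step, let $n = \max(S)$ and $T = S \setminus \{n\}$, so that $T$ is a truncation set and $S/n = \{1\}$. By induction $f_T = 0$, and because $N_S$ is flat over $\W_S(R)$, tensoring \eqref{equation-short-exact-seq-S/n-S-T} with $N_S$ identifies $\ker(N_S \twoheadrightarrow N_T)$ with $V_n(R) \cdot N_S$. Hence every image has the form
\[
f_S(x) = \sum_i V_n(\lambda_i)\, y_i, \qquad \lambda_i \in R,\ y_i \in N_S.
\]

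Now I would exploit the commutativity $\phi_{N,n} \circ f_S = f_{S/n} \circ \phi_{M,n} = 0$. Using $F_n$-linearity of $\phi_n$ together with $F_n V_n = n$, the vanishing reads
\[
0 = \phi_n f_S(x) = \sum_i F_n(V_n(\lambda_i))\, \phi_n(y_i) = n \sum_i \lambda_i\, \phi_n(y_i) \quad \text{in } N_{\{1\}}.
\]
Since $R$, and hence $N_{\{1\}}$, is $\Z$-torsion-free, I conclude $\sum_i \lambda_i \phi_n(y_i) = 0$. Applying $\beta_n$ termwise and invoking the identity $\beta_n(\lambda \cdot \phi_n(y)) = n^{a-1} V_n(\lambda)\, y$ yields
\[
0 = \beta_n\Bigl(\sum_i \lambda_i\, \phi_n(y_i)\Bigr) = n^{a-1} \sum_i V_n(\lambda_i)\, y_i = n^{a-1} f_S(x),
\]
and a final use of $\Z$-torsion-freeness of $N_S$ gives $f_S(x) = 0$.

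No serious obstacle is anticipated, as every ingredient — the description of $\ker(\W_S(R) \to \W_T(R))$ via $V_n$, the identity $F_n V_n = n$, the commutation of $f$ with $\phi_n$, the $\beta_n$-relation, and $\Z$-torsion-freeness — already appears in the proof of conservativity. The only additional bookkeeping is that an arbitrary element of $V_n(R) \cdot N_S$ is a \emph{sum} of elementary terms $V_n(\lambda_i) y_i$ rather than a single one, but both steps above are linear in this decomposition, so the argument is carried over without change.
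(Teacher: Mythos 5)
Your proposal is correct and mirrors the paper's proof almost exactly: reduce to $\mathcal{C}'_{Q,R}$, induct on $|S|$ with $n=\max(S)$ and $T=S\setminus\{n\}$, use $f_T=0$ to place $f_S(x)$ in $\ker(N_S\to N_T)=V_n(R)\cdot N_S$, then combine $\phi_n\circ f_S=f_{\{1\}}\circ\phi_n=0$ with the $\beta_n$-relation and $\Z$-torsion-freeness to kill $f_S(x)$. The one point where you are slightly more careful than the paper is in writing $f_S(x)$ as a finite sum $\sum_i V_n(\lambda_i)y_i$ rather than a single term $V_n(\lambda)y$ as the paper does; since every step (applying $\phi_n$, invoking $\beta_n(\lambda\cdot\phi_n(y))=n^{a-1}V_n(\lambda)y$, and dividing by powers of $n$) is additive, the argument goes through unchanged, and this fills a small gap in the paper's exposition. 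One very minor quibble: you invoke flatness of $N_S$ to identify the kernel, but this is unnecessary — right-exactness of $-\otimes_{\W_S(R)}N_S$ applied to \eqref{equation-short-exact-seq-S/n-S-T}, together with the axiom $N_T=N_S\otimes_{\W_S(R)}\W_T(R)$, already yields $\ker(N_S\to N_T)=V_n(R)\cdot N_S$.
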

 
\subsubsection{}
%% Our next goal is to show that cokernels exist for morphisms $f$ in 
%% $\mathcal{C}_R$ such that ${\rm coker}(T(f))$ is projective (that is, torsion-free).
 The following proposition shows that an object in $\mathcal{C}'_{Q,R}$, where $R$ is a $\Z_{(p)}$-algebra, is determined by the $p$-typical part, that is, on its values for truncation sets consisting of $p$-powers. Recall the notation $S_p$ from Notation \ref{notation-S-p}.

\begin{proposition}\label{proposition-phiN-modules-over-Zp}
Let $R$ a $\Z$-torsion-free ring. Let $Q$ be a truncation set. Suppose $p$ is a prime such that $\ell^{-1}\in R$ for all primes $\ell\in Q\backslash \{p\}$.
  Let $M,N$ be $\mathcal{C}'_{Q,R}$-modules. 
 \begin{enumerate}
 \item Via the equivalence of \eqref{equation-epsilon-decomposition}: 
$$M_S\mapsto  \bigoplus_{n\in S, (n,p)=1} M_{(S/n)_p}.$$  
 \item If $f:M\xr{} N$ is a morphism in $\mathcal{C}'_{Q,R}$ then $f_S\mapsto  \bigoplus_{n\in S, (n,p)=1}f_{(S/n)_p}$
via the equivalence  \eqref{equation-epsilon-decomposition}.
\item The restriction functor 
$$
\mathcal{C}'_{Q,R}\xr{} \mathcal{C}'_{Q_p,R}
$$
is an equivalence of categories.
 \end{enumerate}
 \begin{proof}
For (1). First one proves that the projection 
$\epsilon_1M_S\xr{} M_{S_p}$ is an isomorphism (see Notation \ref{notation-S-p} for $S_p$). The second step is the isomorphism 
$$
\phi_n:\epsilon_nM_S\xr{} \epsilon_1M_{S/n}, 
$$
with $\frac{\epsilon_n}{n^a}\beta_n$ as inverse.

Statement (2) is obvious, and (3) follows from (1) and (2).
 \end{proof}
\end{proposition}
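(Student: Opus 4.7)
The plan is to follow the hint and prove (1) by combining two identifications: the base change description $\epsilon_1 M_S \cong M_{S_p}$, and the Frobenius-induced isomorphism $\phi_n : \epsilon_n M_S \xrightarrow{\cong} \epsilon_1 M_{S/n}$. Statement (2) will then drop out by naturality, and (3) follows by reconstructing $M$ from its restriction to $Q_p$.

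For the first identification in (1), observe that $\W_{S_p}(R) = \epsilon_1 \W_S(R)$ via the restriction map $R^S_{S_p}$, as recalled in Section \ref{section-epsilon-decomposition}. The base change property of objects in $\mc{C}'_{Q,R}$ then yields $\epsilon_1 M_S = M_S \otimes_{\W_S(R)} \W_{S_p}(R) = M_{S_p}$. For the second identification, $F_n(\epsilon_n) = \epsilon_1$ implies $\phi_n$ sends $\epsilon_n M_S$ into $\epsilon_1 M_{S/n}$ (as $\phi_n$ is $F_n$-semilinear), and similarly $\beta_n$ sends $\epsilon_1 M_{S/n}$ into $\epsilon_n M_S$. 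Using that $n^a$ is a unit in $\W_S(R)$ (since $(n,p)=1$ and the standing hypothesis inverts all primes $\ell \in Q \setminus \{p\}$), the relation $\phi_n \circ \beta_n = n^a$ shows that $\phi_n$ is split surjective on the $\epsilon$-pieces. Bijectivity reduces to checking that $\beta_n \circ \phi_n = n^{a-1} V_n(1)$ agrees with $n^a$ on $\epsilon_n M_S$, that is, $V_n(1) \cdot \epsilon_n = n \epsilon_n$. This last identity follows from the product formula $V_n(a) V_n(b) = n V_n(ab)$ applied to $\epsilon_n = \tfrac{1}{n} V_n(\epsilon_{1, S/n})$.

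Statement (2) is immediate from the $\W_S(R)$-linearity of morphisms in $\mc{C}'_{Q,R}$ (which forces compatibility with the $\epsilon_n$-decomposition) together with the defining commutativity with the $\phi_n$. For (3), full faithfulness is a direct consequence of (1) and (2): a morphism is determined on each $\epsilon_n M_S \cong N_{(S/n)_p}$ by its $p$-typical restriction. For essential surjectivity, given $N$ in $\mc{C}'_{Q_p, R}$ one sets $M_S := \bigoplus_{n \in S,\, (n,p)=1} N_{(S/n)_p}$ and equips it with the $\W_S(R)$-action dictated by the decomposition and with the Frobenius $\phi_m$ defined piecewise: for $m$ coprime to $p$ the map $\phi_m$ reindexes the summands via $n \mapsto n/\gcd(n,m)$; for $m$ a $p$-power it acts on each summand via the given Frobenius of $N$.

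The main obstacle is bookkeeping: verifying that the data constructed in (3) actually satisfies all the axioms of Definition \ref{definition-phi-N}. The most delicate points are the multiplicativity $\phi_n \circ \phi_m = \phi_{nm}$ when $n$ and $m$ mix $p$-part and prime-to-$p$ part, and compatibility with the transition maps $M_S \to M_T$ for $T \subset S$. Both reduce to formal manipulations with the orthogonality $\epsilon_n \epsilon_{n'} = 0$ for $n \neq n'$, the Frobenius rule $F_m(\epsilon_n) = \epsilon_{n/m}$ (when $m \mid n$, zero otherwise), and the assumed multiplicativity of $\phi$ on $N$.
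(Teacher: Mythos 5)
Your proposal is correct and follows essentially the same two-step strategy as the paper's proof: identify $\epsilon_1 M_S$ with $M_{S_p}$ via the base-change axiom, and show $\phi_n:\epsilon_n M_S\to \epsilon_1 M_{S/n}$ is an isomorphism with inverse $\frac{\epsilon_n}{n^a}\beta_n$ (your verification via $\phi_n\beta_n=n^a$ and $V_n(1)\epsilon_n=n\epsilon_n$ is exactly what makes that inverse work). Parts (2) and (3) are handled the same way, with your explicit gluing construction for essential surjectivity filling in what the paper leaves implicit.
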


\begin{proposition}\label{proposition-R-over-Q-phi-N-modules}
Let $R$ a $\Z$-torsion-free ring. Let $Q$ be a truncation set. Suppose $p^{-1}\in R$ for all primes $p\in Q$.
Then  
$$
T:\mathcal{C}'_{Q,R} \xr{} \text{(finitely generated and projective $R$-modules)}
$$
defines an equivalence of categories. 
\begin{proof}
   Straightforward.
 \end{proof}
\end{proposition}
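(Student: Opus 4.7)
The plan is to exploit the ghost-map decomposition. Under the hypothesis, every integer whose prime factors lie in $Q$ is a unit in $R$, and hence in $\W_S(R)$ for every finite $S\subset Q$; the ghost map therefore gives an isomorphism $\W_S(R)\xr{\cong}\prod_{s\in S} R$. Under this identification $F_n$ corresponds to the coordinate projection $(a_s)_{s\in S}\mapsto (a_{tn})_{t\in S/n}$, and $V_n$ to the map $(y_t)\mapsto (x_s)$ with $x_s=n\,y_{s/n}$ if $n\mid s$ and $x_s=0$ otherwise.

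For $M\in \mathcal{C}'_{Q,R}$, the projectivity of $M_S$ together with the product decomposition of $\W_S(R)$ yields a canonical splitting $M_S=\bigoplus_{s\in S} M^{(s)}$ with each $M^{(s)}$ a finitely generated projective $R$-module. The base change axiom $M_T\cong M_S\otimes_{\W_S(R)}\W_T(R)$ for $T\subset S$ forces $M^{(s)}$ to be independent of the chosen $S$. Since $\phi_n$ and $\beta_n$ are $\W_S(R)$-linear via $F_n$, they break up into $R$-linear components $\phi_n^{(s)}\colon M^{(sn)}\to M^{(s)}$ and $\beta_n^{(s)}\colon M^{(s)}\to M^{(sn)}$. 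Reading the relations $\phi_n\circ\beta_n=n^a$ and $\beta_n\circ\phi_n=n^{a-1}V_n(1)\cdot(-)$ on the $s$- and $sn$-ghost components respectively gives $\phi_n^{(s)}\circ\beta_n^{(s)}=n^a\cdot\id$ and $\beta_n^{(s)}\circ\phi_n^{(s)}=n^a\cdot\id$; since $n$ is invertible in $R$, both maps are isomorphisms. In particular $\phi_s^{(1)}\colon M^{(s)}\xr{\cong} M^{(1)}=T(M)$ for every $s\in Q$, and the analogous componentwise decomposition of morphisms (which commute with $\beta_n$ by Lemma~\ref{lemma-commute-with-beta}) shows that $T$ is fully faithful.

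For essential surjectivity, given a finitely generated projective $R$-module $P$, define $M_S:=\bigoplus_{s\in S} P$ with $\W_S(R)$-module structure via the ghost isomorphism, set $\phi_n\colon M_S\to M_{S/n}$ to be the projection $(x_s)\mapsto (x_{tn})_t$, and $\beta_n\colon M_{S/n}\to M_S$ by $(y_t)\mapsto (x_s)$ with $x_s=n\,y_{s/n}$ if $n\mid s$ and $0$ otherwise. A direct ghost-coordinate calculation with $a=1$ verifies all axioms of Definition~\ref{definition-phi-N}: compatibility with restrictions is tautological, $\phi_n\phi_m=\phi_{nm}$ is the composition of projections, $\phi_n\beta_n=n$ is immediate, and $\beta_n\phi_n$ has $s$-component $n\,x_s$ if $n\mid s$ and $0$ otherwise, which is precisely the ghost formula for $V_n(1)\cdot x$. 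The whole argument is conceptual once the ghost isomorphism is available; the only real work is the bookkeeping for these ghost-coordinate identities, which is why the paper contents itself with ``straightforward''.
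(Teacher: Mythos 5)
Your proof is correct and is surely what the author meant by ``straightforward'': under the hypothesis, the ghost map gives $\W_S(R)\cong\prod_{s\in S}R$, modules split into ghost components, $\phi_n^{(s)}$ and $\beta_n^{(s)}$ become mutually inverse up to the unit $n^a$, and the identifications $\phi_s^{(1)}\colon M^{(s)}\xr{\cong}M^{(1)}$ yield both full faithfulness and (by reversing the construction) essential surjectivity.
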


\subsubsection{}
Let $P$ be a set of primes (maybe infinite). We set $\Z_{P}:=\Z[p^{-1}\mid p\in P]$. Let $A$ be a commutative ring.
We denote by ${\rm Mod}_{A}$ the category of $A$-modules. We define the category ${\rm Mod}_{A,P}$ to be the category
with objects 
$$
((M_p)_{p\in P},(\alpha_{p,\ell})_{p,\ell\in P}),
$$
where $M_p$ is an $A\otimes_{\Z} \Z_{P\backslash \{p\}}$-module, and $\alpha_{p,\ell}:M_{\ell}\otimes_{\Z_{P\backslash \{\ell\}}}\Z_P\xr{} M_{p}\otimes_{\Z_{P\backslash \{p\}}}\Z_P$
is an isomorphism of $A\otimes \Z_P$-modules such that 
$$
\alpha_{p_1,p_1}=id, \quad
\alpha_{p_1,p_2}\circ \alpha_{p_2,p_3}=\alpha_{p_1,p_3}  \quad 
\text{for all $p_1,p_2,p_3\in P$.}
$$
The morphisms of ${\rm Mod}_{A,P}$ are defined in the evident way.

If $P$ is finite and non-empty, then the evident functor
$$
R_P:{\rm Mod}_{A}\xr{} {\rm Mod}_{A,P}
$$
is an equivalence of categories, because we can glue quasi-coherent sheaves. If $P$ is infinite then this may fail to be an equivalence, but we still 
have the following properties, whose proof is left to the reader.
\begin{lemma}\label{lemma-gluing-with-infinite-primes}
  Suppose $P\neq \emptyset$.
  \begin{itemize}
  \item [(i)] $R_P$ is faithful.
  \item [(ii)] For every $N\in {\rm Mod}_A$ such that $N\xr{} N\otimes_{\Z}\Z_P$ is injective, and every $M\in {\rm Mod}_A$ the following map
is an isomorphism:
$$
\Hom_{{\rm Mod}_A}(M,N)\xr{\cong} \Hom_{{\rm Mod}_{A,P}}(R_P(M),R_P(N)).
$$
\item [(iii)] Suppose that $A\xr{} A\otimes \Z_P$ is injective. Let $\tilde{M}=((\tilde{M}_p),(\alpha_{p,\ell}))\in {\rm Mod}_{A,P}$
be such that $\tilde{M}_p$ is a finitely generated and projective $A\otimes_{\Z} \Z_{P\backslash \{p\}}$-module for all $p\in P$. Then there exists a finitely generated and
projective $M\in {\rm Mod}_A$ such that
$R_P(M)\cong \tilde{M}$.  
  \end{itemize}
\end{lemma}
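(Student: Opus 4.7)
The plan is to dispatch the three parts in sequence: (i) and (ii) by an annihilator argument in $\Z$, and (iii) by Zariski gluing of quasi-coherent sheaves on $\Spec(A)$.

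\textbf{For (i):} Suppose $f\colon M\to N$ satisfies $R_P(f)=0$ and fix $m\in M$. The vanishing of $f\otimes\id_{\Z_{P\setminus\{p\}}}$ yields, for each $p\in P$, an integer $s_p$ whose prime divisors lie in $P\setminus\{p\}$ with $s_p\cdot f(m)=0$. Suppose $f(m)\neq 0$, set $(N_0)={\rm Ann}_{\Z}(f(m))$, and let $q$ be a prime dividing $N_0$. If $q\in P$, take $p=q$: then $q\mid N_0\mid s_q$ contradicts that the prime factors of $s_q$ lie in $P\setminus\{q\}$. If $q\notin P$, pick any $p\in P$ (which exists since $P\neq\emptyset$): then $q\mid s_p$ contradicts that the prime factors of $s_p$ lie in $P$. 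Hence $f(m)=0$.

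\textbf{For (ii):} Injectivity follows from (i) applied to the difference of two maps. For surjectivity, a morphism $\phi=(\phi_p)_{p\in P}$ in $\Hom_{{\rm Mod}_{A,P}}(R_P(M),R_P(N))$ has compatible localisations whose common extension is a single map $\phi_\infty\colon M\otimes\Z_P\to N\otimes\Z_P$. For $m\in M$, the element $\phi_\infty(m\otimes 1)$ lies in the image of $N\otimes\Z_{P\setminus\{p\}}$ inside $N\otimes\Z_P$ for every $p\in P$. Writing $\phi_\infty(m\otimes 1)=n/s$ with $n\in N$ and $s$ a product of primes of $P$, and using that the injectivity hypothesis makes $N$ torsion-free at primes of $P$, for each prime $p\mid s$ one has a relation $u_p n=s m_p$ in $N$ with $u_p$ coprime to $p$, giving $n\in p^{v_p(s)}N$; Chinese remainder yields $n\in sN$, so $\phi_\infty(m\otimes 1)=f(m)\otimes 1$ for a unique $f(m)\in N$. $A$-linearity is immediate, and $R_P(f)=\phi$ follows from the fact that $N\otimes\Z_{P\setminus\{p\}}\hookrightarrow N\otimes\Z_P$ under the torsion-freeness hypothesis.

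\textbf{For (iii):} Set $U_p:=\Spec(A\otimes\Z_{P\setminus\{p\}})$, viewed as an open subscheme of $\Spec(A)$. No prime $\mathfrak{p}\subset A$ contains two distinct rational primes $p\neq\ell$, since otherwise the characteristic of the integral domain $A/\mathfrak{p}$ would divide $\gcd(p,\ell)=1$. Hence $\{U_p\}_{p\in P}$ is an open cover of $\Spec(A)$, and the data $(\tilde M_p,\alpha_{p,\ell})$ glues to a quasi-coherent sheaf $\mc{F}$ on $\Spec(A)$ with $\mc{F}|_{U_p}=\widetilde{\tilde M_p}$; set $M:=\Gamma(\Spec(A),\mc{F})$. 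By quasi-compactness, choose a finite subcover $U_{p_1},\dots,U_{p_n}$; any $q\in P\setminus\{p_1,\dots,p_n\}$ is not contained in any prime of $A$, hence a unit in $A$, so $A\otimes\Z_{P\setminus\{q\}}=A\otimes\Z_P$ and $\tilde M_q$ is forced by $\tilde M_\infty$. On the finite affine cover $\{U_{p_i}\}$, $\mc{F}$ is locally finitely presented and locally free of finite rank, so $M$ is finitely generated and projective over $A$ by the standard argument of lifting generators and checking equality at stalks; by construction $R_P(M)\cong\tilde M$.

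\textbf{Main obstacle:} Part (iii) is the delicate one. The crucial observation is the characteristic argument showing $\{U_p\}_{p\in P}$ covers $\Spec(A)$, after which the problem reduces to standard quasi-coherent gluing; still, since the $U_p$ are localisations at infinite multiplicative sets (not principal opens), one must use quasi-compactness together with the ``units in $A$'' observation to pin down the finitely many $p\in P$ that carry non-trivial information. The injectivity hypothesis on $A\to A\otimes\Z_P$ is needed throughout to ensure enough torsion-freeness for the gluing data to be internally consistent and for each $\tilde M_p$ to embed in the common $\tilde M_\infty$.
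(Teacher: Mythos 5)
Parts (i) and (ii) of your proof are correct. In (i), the annihilator argument in $\Z$ works: from $s_p\cdot f(m)=0$ with $s_p$ a product of primes in $P\setminus\{p\}$ one deduces a contradiction for any prime $q$ dividing the annihilator by choosing $p=q$ if $q\in P$, else any $p$. In (ii), the key lemma $n\in p^{v_p(s)}N$ follows correctly because $u_p$ acts invertibly on $N/p^{v_p(s)}N$ (it is coprime to $p$) and $s'n\in p^{v_p(s)}N$; and the auxiliary facts you need — that $N\to N\otimes\Z_P$ injective means $N$ has no $P$-torsion, and that consequently $N\otimes\Z_{P\setminus\{p\}}\to N\otimes\Z_P$ is injective — are both right.

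Part (iii), however, has a genuine gap. The subsets $U_p=\Spec(A\otimes\Z_{P\setminus\{p\}})$ are \emph{not} in general open in $\Spec(A)$: the underlying set of $U_p$ is $\bigcap_{\ell\in P\setminus\{p\}}D(\ell)$, an arbitrary (typically infinite) intersection of basic open sets, whose complement $\bigcup_{\ell\in P\setminus\{p\}}V(\ell)$ need not be closed. Concretely, with $A=\Z$ and $P$ the set of all primes, $U_p=\{(0),(p)\}$, which is not open. Consequently $\{U_p\}_{p\in P}$ is a set-theoretic cover but not an open cover, the standard quasi-coherent gluing lemma does not produce a sheaf $\mc{F}$ on $\Spec(A)$, and ``quasi-compactness gives a finite subcover'' is not available. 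In fact, when infinitely many $\ell\in P$ fail to be units in $A$ there is provably no finite subcover, since each nonempty $V(\ell)$ lies only in $U_\ell$. Beyond the gap in the argument, I would flag a concern about the statement of (iii) itself: with $A=\Z$, $P$ the set of all primes, $\tilde M_p=\Z_{(p)}$, and transition isomorphisms $\alpha_{p,\ell}=d_p/d_\ell$ with $d_2=1$ and $d_p=p$ for $p\neq 2$ (these satisfy the cocycle condition), any isomorphism $R_P(\Z)\to\tilde M$ would require a single $c\in\Q^\times$ with $v_p(c)=-1$ for all $p\neq 2$, which is impossible; so some additional ``almost all $\tilde M_p$ agree'' hypothesis — which would be automatic when almost all primes of $P$ are units in $A$, making the $U_p$ genuinely open and the cover effectively finite — seems to be needed for (iii) to hold and for your gluing argument to be repaired.
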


For a positive integer $a$, we denote by $\mc{C}'_{Q,R,a}$ the full subcategory of $\mc{C}'_{Q,R}$ consisting of objects such that there exist $\{\beta_n\}_{n}$ as in \eqref{equation-beta_N} for $a$. 
\begin{definition}
Let $Q$ be a non-empty truncation set, and let $P$ be the set of primes of $Q$. 
We denote by $\mc{LC}'_{Q,R,a}$ the category with objects $$((M_p)_{p\in P},(\alpha_{p,\ell})_{p,\ell \in P}),$$ where 
\begin{itemize}
\item $M_p\in {\rm ob}(\mc{C}'_{Q_p,R\otimes \Z_{P\backslash \{p\}},a})$ for all $p\in P$,
\item $\alpha_{p,\ell}:T(M_{\ell})\otimes_{\Z_{P\backslash \{\ell\}}} \Z_P \xr{\cong} T(M_{p})\otimes_{\Z_{P\backslash \{p\}}} \Z_P$ is an isomorphism such that
$$
\alpha_{p_1,p_1}=id, \quad
\alpha_{p_1,p_2}\circ \alpha_{p_2,p_3}=\alpha_{p_1,p_3}  \quad 
\text{for all $p_1,p_2,p_3\in P$.}
$$  
\end{itemize}  
The morphisms are defined in the evident way.
\end{definition}

Broadly speaking the next proposition shows that the category $\mc{C}'_{Q,R,a}$ 
is glued from the local components via the functor $T$.   
\begin{proposition}
 Let $R$ be a $\Z$-torsion-free ring, and let $a$ be a positive integer. For every non-empty truncation set $Q$ the evident functor
$$
\mc{C}'_{Q,R,a}\xr{} \mc{LC}'_{Q,R,a}
$$
is an equivalence of categories.
\begin{proof}
  The claim follows easily from Proposition \ref{proposition-phiN-modules-over-Zp}, Proposition \ref{proposition-R-over-Q-phi-N-modules}, and Lemma \ref{lemma-gluing-with-infinite-primes}.
\end{proof}
\end{proposition}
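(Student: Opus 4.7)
The plan is to build an inverse to the given functor, using Propositions \ref{proposition-phiN-modules-over-Zp} and \ref{proposition-R-over-Q-phi-N-modules} to treat the individual primes and the generic point, and Lemma \ref{lemma-gluing-with-infinite-primes} to perform the module-theoretic gluing.

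First I would unpack the given functor. Given $M\in\mc{C}'_{Q,R,a}$, for each prime $p\in P$ I base change $M$ along $R\to R\otimes_{\Z}\Z_{P\setminus\{p\}}$ using Proposition \ref{proposition-functoriality} to land in $\mc{C}'_{Q,R\otimes\Z_{P\setminus\{p\}},a}$, then restrict from $Q$ to $Q_p$ via Proposition \ref{proposition-phiN-modules-over-Zp}(3), which applies since every prime in $Q\setminus\{p\}$ is invertible in $R\otimes\Z_{P\setminus\{p\}}$. The isomorphisms $\alpha_{p,\ell}$ arise because, after further tensoring with $\Z_P$, Proposition \ref{proposition-R-over-Q-phi-N-modules} identifies both $T(M_p)\otimes\Z_P$ and $T(M_\ell)\otimes\Z_P$ canonically with $T(M)\otimes_{\Z}\Z_P$.

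For fully faithfulness, fix $M,N\in\mc{C}'_{Q,R,a}$ and work level by level in finite $S\subset Q$. Since $M_S$ and $N_S$ are finitely generated projective $\W_S(R)$-modules and $\W_S(R)$ is $\Z$-torsion-free (because $R$ is), $N_S\to N_S\otimes_{\Z}\Z_P$ is injective. Hence Lemma \ref{lemma-gluing-with-infinite-primes}(i)-(ii), applied to the $\W_S(R)$-modules $M_S, N_S$ with their gluing data given by the $p$-typical components (restored to the full truncation set $Q$ by the quasi-inverse of Proposition \ref{proposition-phiN-modules-over-Zp}(3)), shows that $\Hom_{\W_S(R)}(M_S,N_S)$ equals the set of morphisms on the $\mc{LC}'$ side. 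Compatibility with the transition maps $M_S\to M_T$ for $T\subset S$ and with the $\phi_n$ is automatic by uniqueness of the glued morphism, since these compatibilities already hold on each local component.

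For essential surjectivity, start with $((M_p),(\alpha_{p,\ell}))\in\mc{LC}'_{Q,R,a}$. Extend each $M_p$ from $\mc{C}'_{Q_p,R\otimes\Z_{P\setminus\{p\}},a}$ to an object $\widetilde M_p\in\mc{C}'_{Q,R\otimes\Z_{P\setminus\{p\}},a}$ via the quasi-inverse of Proposition \ref{proposition-phiN-modules-over-Zp}(3). For each finite $S\subset Q$, the $\alpha_{p,\ell}$ induce canonical gluing isomorphisms between the $\W_S(R\otimes\Z_P)$-modules $(\widetilde M_p)_S\otimes_{\Z_{P\setminus\{p\}}}\Z_P$, since over $R\otimes\Z_P$ the category $\mc{C}'_{Q,R\otimes\Z_P,a}$ is equivalent to projective $R\otimes\Z_P$-modules through $T$ by Proposition \ref{proposition-R-over-Q-phi-N-modules}. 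Lemma \ref{lemma-gluing-with-infinite-primes}(iii) then produces a finitely generated projective $\W_S(R)$-module $M_S$ whose base change recovers each $(\widetilde M_p)_S$. The transition maps $M_S\to M_T$, the operators $\phi_n$, and the module action are then reconstructed by applying the fully faithfulness of the previous step to the locally specified morphisms.

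The main technical obstacle is to verify that the glued object really lies in $\mc{C}'_{Q,R,a}$, in particular that there exist Verschiebung maps $\beta_n$ as in \eqref{equation-beta_N} for the same integer $a$. This is resolved by Lemma \ref{lemma-commute-with-beta}: since $\beta_n$ is determined by $a$ and commutes with every morphism in $\mc{C}'_{Q,R,a}$, one glues the $\beta_n$ of each $\widetilde M_p$ (and of the $\Z_P$-component) by the same procedure, and the identities $\phi_n\circ\beta_n=n^a$ and $\beta_n(\lambda\cdot\phi_n(x))=n^{a-1}V_n(\lambda)\cdot x$ pass through gluing because they are relations between $\W_S(R)$-linear maps into $\Z$-torsion-free modules that hold on every $p$-typical component and on the $\Z_P$-localization.
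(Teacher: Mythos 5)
Your proof is correct and implements, essentially in the intended way, the paper's one-line reference to Propositions~\ref{proposition-phiN-modules-over-Zp}, \ref{proposition-R-over-Q-phi-N-modules}, and Lemma~\ref{lemma-gluing-with-infinite-primes}: base change and restrict to the $p$-typical part for each $p$, use the $T$-equivalence over $R\otimes\Z_P$ to upgrade the gluing data on $T$-values to gluing data on all levels $S$, and then apply the module-gluing lemma level by level. Your resolution of the one genuinely delicate point --- that the $\beta_n$ on the glued object can be recovered because Lemma~\ref{lemma-commute-with-beta} fixes them once $a$ is chosen, so they agree on overlaps and glue uniquely --- is exactly what makes the argument complete.
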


\subsection{Proof of Poincar\'e duality}

\subsubsection{}
Let $f:X\xr{} \Spec(R)$ be a smooth, projective morphism of relative dimension $d$ between noetherian schemes such that $H^*_{dR}(X/R)$ is a flat $R$-module. Suppose furthermore that $\Spec(R)$ is integral and the field of fractions of $R$ has characteristic zero.

We know that $H^0(X,\OO_X)$ is a finite \'etale $R$-algebra and $$H^0_{dR}(X/R)=H^0(X,\OO_X).$$ Since $H^*_{dR}(X/R)$ is 
flat, we have 
$$
H^i_{dR}(X/R)\otimes_R k(y)\xr{\cong} H^i_{dR}(X_y/k(y)),
$$
for every point $y\in \Spec(R)$, and $X_y$ being the fibre of $y$. In particular, we obtain
\begin{equation}\label{equation-H0-on-fibres}
H^0(X,\OO_X)\otimes_R k(y)\xr{\cong} H^0(X_y,\OO_{X_y}).  
\end{equation}
By Grothendieck-Serre duality we see that $y\mapsto \dim_{k(y)} H^d(X_y,\omega_{X_y})$ is a constant function, thus $H^d(X,\omega_{X/R})$ is a finitely generated 
projective $R$-module and we have 
$$
H^d(X,\omega_{X/R})\otimes_R k(y)\xr{\cong} H^d(X_y,\omega_{X_y})
$$
for every point $y\in \Spec(R)$. Since the Hodge to de Rham spectral sequence degenerates at the generic point of $\Spec(R)$, we conclude:
$$
H^d(X,\omega_{X/R})\xr{\cong} H^{2d}_{dR}(X/R).
$$
Recall that we have a trace map
$$
{\rm Tr}:H^d(X,\omega_{X/R})\xr{} R;
$$
we will also denote by ${\rm Tr}$ the induced map $H^{2d}_{dR}(X/R)\xr{} R$.
The duality pairing 
$$
H^0(X,\OO_X) \times H^d(X,\omega_{X/R})\xr{} R
$$
induces a duality pairing 
$$
H^0_{dR}(X/R)\times H^{2d}_{dR}(X/R) \xr{} R.
$$
Note that the fibres of $f$ are connected if $H^0(X,\OO_X)=R$. Moreover, the equality $H^0(X,\OO_X)=R$ implies that the fibres
are geometrically connected by using (\ref{equation-H0-on-fibres}). 

Suppose now that $H^0(X,\OO_X)=R$, and set $c_X:={\rm Tr}^{-1}(1)\in H^d(X,\omega_{X/R})=H^{2d}_{dR}(X/R)$. For a generically finite $R$-morphism 
$g:X\xr{} Y$, where $Y$ satisfies the same assumptions as $X$ (in particular, $Y/R$ is of relative dimension $d$), 
%% we have a trace
%% map
%% $$
%% g_*:H^{2d}_{dR}(X/R)=H^d(X,\omega_{X/R})\xr{} H^d(Y,\omega_{Y/R}) = H^{2d}_{dR}(Y/R).
%% $$
%% Since $g_*$ is dual to the identity $R=H^0(Y,\OO_Y)\xr{g^*}H^0(X,\OO_X)=R$ we have $g_*(c_X)=c_Y$.
%% On the other hand, 
we have a pull-back map
$$
g^*:H^{2d}_{dR}(Y/R)=H^d(Y,\omega_{Y/R})\xr{} H^d(X,\omega_{X/R}) = H^{2d}_{dR}(X/R)
$$
which is dual to the trace map 
$$
g_*:H^0(X,\OO_X)\xr{} H^0(Y,\OO_Y), \quad g_*(1)=\deg(g),
$$ 
thus 
$
g^*(c_Y)=\deg(g)\cdot c_X.
$ 

\begin{proposition}\label{proposition-trace-map}
 Let $R$ be a smooth $\Z$-algebra.
  Let $X$ be a smooth projective scheme over $R$ such that $H^*_{dR}(X/R)$ is a projective $R$-module. 
Suppose that $X$ is connected of relative dimension $d$. There is an isomorphism 
$$
H^{2d}_{dRW}(X/\W(R))\cong H^0_{dRW}(X/\W(R))\otimes \mathbf{1}(-d)
$$
 and a natural morphism  in $\mathcal{C}_R$:
$$
 H^{2d}_{dRW}(X/\W(R)) \xr{} \mathbf{1}(-d)
$$
 \begin{proof} 
Certainly, we may suppose that $\Spec(R)$ is integral.

  \emph{1.Step:} Reduction to $X/R$ has geometrically connected fibres. 

Set $L=H^0(X,\OO_X)$, $L$ is a finite \'etale $R$-algebra. It suffices to show the existence
of an isomorphism 
\begin{equation}\label{equation-reduction-to-geom-int-case}
 H^{2d}_{dRW}(X/\W(L)) \xr{\tau} \mathbf{1}(-d)  
\end{equation}
in $\mathcal{C}_L$ such that $\tau_{\{1\}}$ is the trace map. In view of $$H^{2d}_{dRW}(X/\W(L))=H^{2d}_{dRW}(X/\W(R)),$$ (\ref{equation-reduction-to-geom-int-case}) yields in $\mathcal{C}_R$:
\begin{equation}\label{equation-induced-trace-map-on-R}
H^{2d}_{dRW}(X/\W(R))\xr{\cong} H^0_{dRW}(X/\W(R))\otimes \mathbf{1}(-d) \xr{tr\otimes id} \mathbf{1}(-d),  
\end{equation}
with $tr:H^0_{dRW}(X/\W(R)) \xr{} \mathbf{1}$ being defined by the usual trace map 
$$
H^0_{dRW}(X/\W_S(R))=\W_S(L) \xr{} \W_S(R).
$$ 
The morphism (\ref{equation-induced-trace-map-on-R}) is functorial because it induces the usual trace map after evaluation at $\{1\}$.
Therefore we may assume $R=L$ in the following. 

  \emph{2.Step:} Proposition \ref{proposition-R-over-Q-phi-N-modules} implies the existence of a unique isomorphism 
$$
e:\mathbf{1}(-d)\otimes \Q \xr{\cong} H^{2d}_{dRW}(X/\W(R))\otimes \Q
$$
that induces ${\rm Tr}^{-1}$ after evaluation at $\{1\}$. In other words, there is a unique system $(e_S)_{S}$ with 
$e_S\in H^{2d}_{dRW}(X/\W_S(R))\otimes \Q$ such that
\begin{enumerate}
\item $\pi_{S,T}(e_S)=e_T$ for all $T\subset S$, where $\pi_{S,T}$ is induced by the projection $$H^{2d}_{dRW}(X/\W_S(R))\xr{} H^{2d}_{dRW}(X/\W_T(R)),$$
\item $\phi_n(e_S)=n^d\cdot e_{S/n}$ for all $n,S$,
\item $e_{\{1\}}={\rm Tr}^{-1}(1)$.
\end{enumerate}
Our goal is to show 
\begin{equation}\label{equation-eS-integral-claim}
 e_S\in H^{2d}_{dRW}(X/\W_S(R)) 
\end{equation}
for every finite truncation set $S$. The strategy of the proof will be to show this for $X=\P^d_R$ first. The next step will be to 
prove that (\ref{equation-eS-integral-claim}) is local in $\Spec(R)$. Locally on $\Spec(R)$ we can find generically finite morphisms
to $\P^d$, which can be used together with the explicit description of de Rham-Witt cohomology after completion (Proposition \ref{proposition-de-Rham-Witt-cohomology-limit-blue}) to prove the claim.
  
\emph{3.Step:} Suppose $X=\P^d_R$. For any finite $S$, we get a morphism of $\W_S(R)$-schemes 
$$
g_S:\W_S(\P^d_R)\xr{} \P^d_{\W_S(R)}
$$
induced by $\frac{x_i}{x_j}\mapsto [\frac{x_i}{x_j}]$ on the standard affine covering. The morphisms $g_S$ are compatible with the Frobenius 
morphisms provided that the action on $\P^d_{\W_S(R)}$ is given by $\phi_n^*(x_i)=x_i^n$.

We obtain
\begin{multline*}
g^*:H^d(\P^d_{\W_S(R)},\omega_{\P^d_{\W_S(R)}/\W_S(R)})\xr{} H^d(\W_S(\P^d_R),\Omega^d_{\W_S(\P^d_R)/\W_S(R)}) \\
\xr{} H^d(\W_S(\P^d_R),\W_S\Omega^d_{\P^d_R/R}) \xr{} H^{2d}_{dRW}(\P^d_R/\W_S(R)).   
\end{multline*}
Note that ${\rm Tr}:H^d(\P^d_{\W_S(R)},\omega_{\P^d_{\W_S(R)}/\W_S(R)})\xr{\cong} \W_S(R)$ and $\delta_S:={\rm Tr}^{-1}(1)$ satisfies
$\phi^*_n(\delta_S)=n^d\cdot \delta_{S/n}$. Therefore $e_S=g^*(\delta_S)$, which proves (\ref{equation-eS-integral-claim}) in the case
of a projective space.

\emph{4.Step:} We claim that in order to prove (\ref{equation-eS-integral-claim}) it is sufficient to prove
\begin{equation}\label{equation-eS-integral-on-maximal-ideal}
  e_S\in H^{2d}_{dRW}(X/\W_S(R))\otimes_{\W_S(R)}\W_S(R_{\mf{m}}) 
\end{equation}
for every maximal ideal $\mf{m}$. Indeed, let $\mathscr{F}$ be the coherent sheaf on $\Spec(\W_S(R))$ associated to $M:=H^{2d}_{dRW}(X/\W_S(R))$.
For every $\mf{m}$, we can choose an open affine neighborhood $U_{\mf{m}}\subset \Spec(R)$ and a section $e_{\mf{m}}\in \mathscr{F}(\W_S(U_{\mf{m}}))$ mapping to 
$e_S\in M\otimes_{\W_S(R)}\W_S(R_{\mf{m}})$. The section $e_{\mf{m}}$ is unique and the sections $(e_{\mf{m}})_{\mf{m}}$ glue to a section of $\mathscr{F}$ on $\bigcup_{\mf{m}} \W_S(U_{\mf{m}})=\W_S(\Spec(R))$, which proves the claim. 
%which by Lemma \ref{lemma-points-of-WSR} is the whole space $\Spec(\W_S(R))$.  

Let $\hat{R}$ be the completion $\varprojlim_{j} R/\mf{m}^j$. For every integer $n$, we have 
$$
n\hat{R}\cap R_{\mf{m}} =\bigcap_{j=1}^{\infty} (nR_{\mf{m}} + \mf{m}^j)=nR_{\mf{m}},
$$ 
and thus $(R_{\mf{m}}\otimes_{\Z} \Q)\cap \hat{R}=R_{\mf{m}}$ as intersection in $\hat{R}\otimes_{\Z}\Q$. Therefore
\begin{equation}\label{equation-eS-integral-on-maximal-ideal-completion}
  e_S\in H^{2d}_{dRW}(X/\W_S(R))\otimes_{\W_S(R)}\W_S(\hat{R}) 
\end{equation}
implies (\ref{equation-eS-integral-on-maximal-ideal}).

\emph{5.Step:} We will show (\ref{equation-eS-integral-on-maximal-ideal-completion}). 
We may pass from $\Spec(R)$ to a neighborhood $\Spec(R')$ of $\mf{m}$. Let $R'$ be such that there exists a generically finite $R'$-morphism 
$$
f:X\times_{\Spec(R)}\Spec(R')\xr{} \P^d_{R'}.
$$
The existence is proved in Proposition \ref{proposition-gen-finite-to-Pd} below. Then $e_S=\frac{1}{\deg(f)}f^*(e_S)$, because the 
classes $(\frac{1}{\deg(f)}f^*(e_S))_S$ satisfy the properties listed in the second step. 

Set $p={\rm char}(R/\mf{m})$. To prove (\ref{equation-eS-integral-on-maximal-ideal-completion}) we may assume that $S=\{1,p,\dots,p^{n-1}\}$. Then Proposition
\ref{proposition-de-Rham-Witt-cohomology-limit-blue}(ii) yields the claim, because for the de Rham cohomology we know that $f^*({\rm Tr}^{-1}(1))$
is divisible by $\deg(f)$. 
\end{proof}
\end{proposition}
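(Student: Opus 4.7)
The plan is to construct the trace morphism rationally first (using Proposition~\ref{proposition-R-over-Q-phi-N-modules} to identify $\mathcal{C}'_{R,R\otimes \Q}$ with finitely generated projective $R\otimes\Q$-modules via $T$) and then verify integrality locally by comparing to an explicit model on projective space. Before doing so, I would reduce to the case where the fibres of $X/R$ are geometrically connected, i.e.\ $H^0(X,\OO_X)=R$. Setting $L:=H^0(X,\OO_X)$, which is finite étale over $R$, a trace morphism $\tau:H^{2d}_{dRW}(X/\W(L))\to \mathbf{1}(-d)$ in $\mathcal{C}_L$ would then compose with the usual trace $\W_S(L)\to \W_S(R)$ on the zeroth group to yield the desired morphism in $\mathcal{C}_R$ with the correct value at $\{1\}$. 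This reduces the problem to $R=L$.

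Next, I would exploit Proposition~\ref{proposition-R-over-Q-phi-N-modules} to produce, uniquely, a compatible system $e_S\in H^{2d}_{dRW}(X/\W_S(R))\otimes\Q$ characterized by $e_{\{1\}}=\mathrm{Tr}^{-1}(1)\in H^{2d}_{dR}(X/R)$, by restriction compatibility $\pi_{S,T}(e_S)=e_T$, and by the Frobenius transformation law $\phi_n(e_S)=n^d\cdot e_{S/n}$. Once I show $e_S$ is integral, mapping $x\mapsto x\cdot e_S$ provides simultaneously the claimed isomorphism $H^0_{dRW}(X/\W(R))\otimes \mathbf{1}(-d)\xrightarrow{\cong} H^{2d}_{dRW}(X/\W(R))$ and, composing with the trace on $H^0$, the natural morphism to $\mathbf{1}(-d)$. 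The task then boils down to proving $e_S\in H^{2d}_{dRW}(X/\W_S(R))$.

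To prove integrality I would first treat $X=\P^d_R$ directly: the affine-chart rule $x_i/x_j\mapsto [x_i/x_j]$ defines a natural morphism $g_S:\W_S(\P^d_R)\to \P^d_{\W_S(R)}$ that is compatible with the Frobenius $\phi_n^*(x_i)=x_i^n$ on the target, so pulling back the standard generator $\delta_S:=\mathrm{Tr}^{-1}(1)\in H^d(\P^d_{\W_S(R)},\omega)$ gives an \emph{integral} class $g_S^*(\delta_S)$ that satisfies the defining properties of $e_S$ and hence equals it. In general, integrality of $e_S$ is local on $\Spec(R)$ by gluing finitely generated projective modules, so it suffices to check it after localizing at each maximal ideal $\mf{m}$, and then after $\mf{m}$-adic completion, since $(R_{\mf{m}}\otimes\Q)\cap \hat R = R_{\mf{m}}$ inside $\hat R\otimes\Q$. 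On a small enough Zariski neighborhood $\Spec(R')$ of $\mf{m}$ one should be able to find a generically finite $R'$-morphism $f:X_{R'}\to \P^d_{R'}$; using the relation $e_S=\frac{1}{\deg(f)}f^*(e_S)$ (which follows from the Frobenius compatibility of both sides and a degree computation at $\{1\}$), the $p$-typical reduction via Proposition~\ref{proposition-epsilon-decomposion-deRham-Witt}, and Proposition~\ref{proposition-de-Rham-Witt-cohomology-limit-blue}(ii) identifying the completed de Rham-Witt cohomology with $H^*_{dR}(X/R)\otimes_{R,\rho}W_n(\hat R)$, integrality transfers from $\P^d$ because $f^*\mathrm{Tr}^{-1}(1)$ is divisible by $\deg(f)$ already in de Rham cohomology. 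The main obstacle I anticipate is twofold: producing the generically finite map to $\P^d$ étale-locally around each closed point (a separate geometric input, requiring a spreading-out argument from the residue field), and carefully formulating the model on $\P^d$ so that the Frobenius-equivariance of $g_S$ matches the $n^d$-twisted Frobenius on $\mathbf{1}(-d)$; the remainder of the argument is essentially forced by the conservativity and faithfulness of $T$.
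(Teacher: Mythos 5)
Your proposal follows the paper's proof step for step: the reduction to $H^0(X,\mathcal{O}_X)=R$ via the finite étale algebra $L$, the rational construction of $e_S$ through Proposition~\ref{proposition-R-over-Q-phi-N-modules}, the explicit model $g_S:\W_S(\P^d_R)\to\P^d_{\W_S(R)}$ for projective space, localization and $\mf{m}$-adic completion using $(R_{\mf{m}}\otimes\Q)\cap\hat R=R_{\mf{m}}$, and the transfer of integrality via a generically finite map to $\P^d$ together with Proposition~\ref{proposition-de-Rham-Witt-cohomology-limit-blue}(ii). The obstacle you flag about producing the generically finite map locally is exactly what the paper handles in Proposition~\ref{proposition-gen-finite-to-Pd}, so the approach is the same.
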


\begin{proposition}\label{proposition-gen-finite-to-Pd}
  Let $Y$ be of finite type over $\Spec(\Z)$. Let $X/Y$ be  smooth projective such that $X$ is connected of relative dimension $d$. 
For every closed point $y\in Y$ there is open neighborhood $U$ of $y$, and a generically finite $U$-morphism $X\times_{Y}U\xr{} \P^d_U$.   
\end{proposition}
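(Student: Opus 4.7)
The plan is to construct the desired generically finite morphism as a projection from a suitable linear subspace of an ambient projective embedding of $X$, and then to extend from the closed fiber to a neighborhood using properness and upper semicontinuity of fiber dimension.

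After shrinking $Y$, I may assume $Y=\Spec(R)$ is affine and that $y$ lies in the image of $X\to Y$ (otherwise $X\times_Y U$ is empty for small $U$ and any morphism works). Since $X/Y$ is projective, fix a closed immersion $X\hookrightarrow \P^N_R$. The first step is to handle the closed fiber $X_y\subset \P^N_{k(y)}$, which is a $d$-dimensional projective $k(y)$-scheme: by graded Noether normalization applied to its homogeneous coordinate ring, there exist homogeneous polynomials $g_0,\ldots,g_d\in k(y)[x_0,\ldots,x_N]$ of some common degree $m\geq 1$ with no common zero on $X_y$, defining a finite surjective morphism $\pi_y\colon X_y\to \P^d_{k(y)}$.

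Next I would lift $g_0,\ldots,g_d$ to homogeneous polynomials $G_0,\ldots,G_d\in R[x_0,\ldots,x_N]$ of degree $m$ and consider $Z=V(G_0,\ldots,G_d)\cap X\subset X$. By construction $Z\cap X_y=\emptyset$, and since $X\to Y$ is proper the image of $Z$ in $Y$ is closed and avoids $y$. Choose an affine, irreducible, open neighborhood $U$ of $y$ in the complement of this image, contained moreover in the image of $X\to Y$. On $X':=X\times_Y U$ the polynomials $G_i$ have no common zero, and so define a morphism $\phi\colon X'\to \P^d_U$ of $U$-schemes.

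Finally I would argue that $\phi$ is generically finite. Since $X$ is smooth and connected (hence irreducible), $X'$ is irreducible of dimension $d+\dim U$. The fiber $\phi_y\colon X'_y\to \P^d_{k(y)}$ coincides with $\pi_y$ and thus has zero-dimensional fibers; by upper semicontinuity of fiber dimension, the open locus $W\subset X'$ on which $\phi$ is quasi-finite contains $X'_y$, so it is non-empty, hence dense. Quasi-finiteness on $W$ forces $\dim \phi(W)=d+\dim U$, so the closed set $\phi(X')$ (closed by properness of $\phi$) must equal $\P^d_U$; together with quasi-finiteness on $W$, this gives generic finiteness of $\phi$.

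The main obstacle is the Noether normalization step when $k(y)$ is a finite field, where generic linear forms need not do the job; instead one invokes the graded version of Noether normalization valid over arbitrary fields, allowing the $g_i$ to have common degree $m>1$, which amounts to passing through a Veronese re-embedding of $\P^N_{k(y)}$.
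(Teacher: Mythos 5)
Your proof is correct in substance, but it takes a genuinely different route from the paper. The paper works over the local ring $\OO_{Y,y}$: Lemma \ref{lemma-gen-hyperplane-section} uses a Bertini-type argument (invoking Poonen's theorem over finite residue fields) to cut $X\times_Y\Spec(\OO_{Y,y})$ by a chain of \emph{smooth} hypersurface sections, and Lemma \ref{lemma-gen-finite} iterates this to produce $d+1$ sections $s_0,\dots,s_d$ of a power of an ample line bundle with $\bigcap_{i=0}^d V(s_i)=\emptyset$ and $\bigcap_{i=1}^d V(s_i)$ finite and non-empty over $\OO_{Y,y}$; generic finiteness then comes from the existence of this one finite non-empty fiber. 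You instead work only over the residue field $k(y)$, apply graded Noether normalization to the homogeneous coordinate ring of $X_y$ (passing to a Veronese re-embedding to equalize degrees when $k(y)$ is finite), lift the Noether coordinates to $R$, and spread out. Your approach is more elementary, in that it replaces Poonen's Bertini theorem by the purely algebraic Noether normalization and never needs smoothness of the intermediate linear sections; what the paper's argument buys is a stronger conclusion over the thickened fiber, which in the end it does not use.

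Two small points deserve attention. First, the inference ``$X$ smooth and connected, hence irreducible'' requires $Y$ to be regular (or at least normal): smoothness over a non-normal base does not preserve irreducibility. In the proposition as stated $Y$ is merely of finite type over $\Z$, so you should not invoke irreducibility of $X'$ as such. Second, an irreducible affine neighborhood $U$ of $y$ need not exist if $Y$ is locally reducible at $y$. Neither issue is fatal: it suffices to take $U$ \emph{connected} (always possible), for then $X'\to U$ is smooth and proper, so every irreducible component of $X'$ has open and closed image, hence surjects onto $U$ and meets $X'_y$. This makes the quasi-finite locus $W$ dense without any irreducibility hypothesis. In the paper's actual application ($R$ a smooth integral $\Z$-algebra) $Y$ is regular and integral, so your original formulation would also go through there, but the proposition is stated in greater generality, and the paper's own endgame (one finite non-empty fiber over a section $U\to\P^d_U$, spread out by semicontinuity) also avoids any irreducibility hypothesis.
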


In order to prove Proposition \ref{proposition-gen-finite-to-Pd} we will need a sequence of lemmas.

\begin{lemma}\label{lemma-gen-hyperplane-section}
  Let $R$ be a local noetherian ring. Let $X/R$ be a smooth projective $R$-scheme such that every connected component 
of $X$ has relative dimension $d\geq 0$ over $\Spec(R)$. Let $\mathscr{L}$ be a relative ample
line bundle. There is $n>0$ satisfying the following property: for every $k\geq 1$ there is a section  $s\in H^0(X,\mathscr{L}^{\otimes kn})$ such that $V(s)$ is smooth of relative dimension $d-1$ over $R$.  
\begin{proof}
  Let $y\in \Spec(R)$ denote the closed point. For $n\gg 0$ we have $H^i(X_y,\mathscr{L}_{\mid X_y}^{\otimes n})=0$ for all $i>0$. By semicontinuity 
we get $H^i(X,\mathscr{L}^{\otimes n})=0$ for all $i>0$, and 
$$
H^0(X,\mathscr{L}^{\otimes n}) \xr{} H^0(X_y,\mathscr{L}_{\mid X_y}^{\otimes n}) 
$$
is surjective. Replace $\mathscr{L}$ by a power such that this holds for all $n\geq 1$.

If the residue field of $R$ is infinite then we can find a section $s_y\in H^0(X_y,\mathscr{L}_{\mid X_y}^{\otimes n})$
such that $V(s_y)$ is smooth of dimension $d-1$. 
In the case of a finite residue field we have to use \cite{P} and may have to replace $\mathscr{L}$ by a high enough power again. 

Let $s$ be a lifting of $s_y$ to $H^0(X,\mathscr{L}^{\otimes n})$, set $H:=V(s)$. If $d=0$ then $H$ is empty, because it has empty intersection with 
the special fibre. For $d\geq 1$, $H$ is flat by the local criterion for flatness, because it has transversal intersection with the special fibre.
 Since $H\xr{} \Spec(R)$ is flat and the special fibre is smooth, we conclude that $H$ is smooth. By Chevalley's theorem, $H$ is of relative dimension $d-1$.
\end{proof}
\end{lemma}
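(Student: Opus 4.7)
The plan is to reduce the construction of a smooth hyperplane section on $X$ to a construction on the closed fibre $X_y$ and then lift. Let $y\in\Spec(R)$ be the closed point with residue field $k(y)$. By Serre vanishing applied to the projective $k(y)$-scheme $X_y$ and the ample line bundle $\mathscr{L}_{|X_y}$, there is an integer $n_1$ with $H^i(X_y,\mathscr{L}^{\otimes m}_{|X_y})=0$ for all $i>0$ and $m\geq n_1$. Cohomology and base change together with Grauert's semicontinuity on the proper $R$-scheme $X$ then give, after enlarging $n_1$ if necessary, that $H^i(X,\mathscr{L}^{\otimes m})=0$ for all $i>0$ and $m\geq n_1$, and that the restriction map $H^0(X,\mathscr{L}^{\otimes m})\to H^0(X_y,\mathscr{L}^{\otimes m}_{|X_y})$ is surjective for every such $m$. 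I would then replace $\mathscr{L}$ by an appropriate power $\mathscr{L}^{\otimes n}$ so that these two properties hold for every $m\geq 1$; the integer $n$ that the lemma produces will be this power.

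Next, for each $k\geq 1$, I would produce a section $\bar s\in H^0(X_y,\mathscr{L}^{\otimes k}_{|X_y})$ whose zero scheme $V(\bar s)$ is smooth over $k(y)$ of relative dimension $d-1$, and then lift it through the surjection above to $s\in H^0(X,\mathscr{L}^{\otimes k n})$. When $k(y)$ is infinite this is a standard Bertini argument applied to the closed immersion defined by a sufficiently ample power of $\mathscr{L}_{|X_y}$; when $k(y)$ is finite one needs Poonen's Bertini theorem over finite fields (the reference [P] in the excerpt), which, after possibly replacing the chosen $n$ by a further multiple, guarantees such a smooth section in the desired twist $\mathscr{L}^{\otimes kn}_{|X_y}$ for every $k\geq 1$ simultaneously.

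Finally, I would verify that $H:=V(s)\subset X$ is smooth of relative dimension $d-1$ over $\Spec(R)$. The case $d=0$ is degenerate: $V(\bar s)$ is empty, so $H\cap X_y=\emptyset$; since $X\to\Spec(R)$ is proper and $R$ is local, $H$ must be empty, which trivially has relative dimension $-1$. For $d\geq 1$, the section $\bar s$ cuts out a Cartier divisor on the special fibre, so at every point of $H\cap X_y$ the function $s$ is a non-zero-divisor modulo the maximal ideal of $R$. The local criterion of flatness applied along $X\to\Spec(R)$ then gives that $\mathcal{O}_H$ is $R$-flat at every point of $H$. Combined with smoothness of the fibre $H_y=V(\bar s)$ over $k(y)$ and the fact that $H$ is of finite presentation over $R$, this yields smoothness of $H\to\Spec(R)$; constancy of fibre dimension for a flat proper morphism (Chevalley) gives the relative dimension $d-1$.

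The main obstacle is the finite residue field case, which genuinely requires Poonen's Bertini theorem rather than a naive generic-hyperplane argument, and forces the initial replacement of $\mathscr{L}$ by a large enough power $\mathscr{L}^{\otimes n}$ so that every multiple $kn$ falls in the regime where Poonen's theorem produces smooth sections on $X_y$; once this is arranged, everything else is a fibrewise-to-total-space transfer via flatness.
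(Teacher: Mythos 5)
Your proposal is correct and follows essentially the same route as the paper: Serre vanishing on the closed fibre plus semicontinuity/base change to get surjectivity of restriction after passing to a power of $\mathscr{L}$, Bertini (Poonen in the finite residue field case) on $X_y$, lifting the section, and then flatness via the local criterion together with smoothness of the special fibre and Chevalley for the relative dimension. No substantive differences to report.
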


\begin{remark}
  $H$ is empty if and only if $d=0$. 
\end{remark}

\begin{lemma} \label{lemma-gen-finite}
Assumptions as in Lemma \ref{lemma-gen-hyperplane-section}. There is $n\geq 1$ and sections $s_0,\dots,s_d\in H^0(X,\mathscr{L}^{\otimes n})$
such that 
\begin{enumerate}
\item $\bigcap_{i=0}^d V(s_i)$ is empty, 
\item $\bigcap_{i=1}^{d} V(s_i)$ is finite over $R$ and non-empty.
\end{enumerate}
\begin{proof}
  Let $m$ and $s\in H^0(X,\mathscr{L}^{\otimes m})$ such that $H=V(s)$ is a smooth hypersurface as in Lemma \ref{lemma-gen-hyperplane-section}.
Without loss of generality $m=1$.
For $k\gg 0$, we get a surjective map 
$$
H^0(X,\mathscr{L}^{\otimes k})\xr{} H^0(H,\mathscr{L}_{\mid H}^{\otimes k}).
$$ 
By induction on $d$ we can find $s_{H,0},\dots,s_{H,d-1}\in H^0(H,\mathscr{L}_{\mid H}^{\otimes k})$, for some $k\geq 1$, satisfying the desired properties for $H$. Note that $s^j_{H,0},\dots,s^j_{H,d-1}$, for all $j\geq 1$, also satisfy the properties, hence we may suppose $k\gg 0$.
Choose some liftings $s_0,s_1,\dots,s_{d-1}\in H^0(X,\mathscr{L}^{\otimes k})$. Then $s_0,s_1,\dots,s_{d-1},s^k$ satisfy the required properties.
\end{proof}
\end{lemma}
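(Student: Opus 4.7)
The plan is to argue by induction on $d$, using Lemma \ref{lemma-gen-hyperplane-section} to cut $X$ by a smooth hypersurface and restrict to it.

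The base case $d=0$: here $X/R$ is smooth projective of relative dimension $0$, hence already finite over $R$; condition~(2) is vacuous (the empty intersection is $X$ itself, finite over $R$ and non-empty by assumption), while condition~(1) asks for a single section $s_0\in H^0(X,\mathscr{L}^{\otimes n})$ with empty vanishing locus. Such a section is provided by Lemma \ref{lemma-gen-hyperplane-section}, since there the produced hypersurface $V(s_y)$ (smooth of relative dimension $d-1=-1$) is necessarily empty.

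For the inductive step, apply Lemma \ref{lemma-gen-hyperplane-section} to obtain some $m>0$ and $s\in H^0(X,\mathscr{L}^{\otimes m})$ such that $H:=V(s)\subset X$ is smooth over $R$ of relative dimension $d-1$. After replacing $\mathscr{L}$ by $\mathscr{L}^{\otimes m}$ we may assume $m=1$, so $s\in H^0(X,\mathscr{L})$. By the induction hypothesis applied to $H$ (which is smooth projective over $R$ with relatively ample $\mathscr{L}_{|H}$), there exist $k_0\geq 1$ and sections
\[
s_{H,0},\ldots,s_{H,d-1}\in H^0(H,\mathscr{L}_{|H}^{\otimes k_0})
\]
satisfying (1) and (2) on $H$. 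For every $j\geq 1$ the sections $s_{H,0}^{\,j},\ldots,s_{H,d-1}^{\,j}$ still satisfy (1) and (2), since raising to a power does not change the vanishing locus. Thus we may freely replace $k_0$ by any large multiple $k$. Using Serre vanishing (and semicontinuity, as in the proof of Lemma \ref{lemma-gen-hyperplane-section}) we may choose $k$ large enough that $H^1(X,\mathscr{L}^{\otimes(k-1)})=0$, so that the restriction map
\[
H^0(X,\mathscr{L}^{\otimes k})\longrightarrow H^0(H,\mathscr{L}_{|H}^{\otimes k})
\]
is surjective; pick liftings $s_0,\ldots,s_{d-1}\in H^0(X,\mathscr{L}^{\otimes k})$ of $s_{H,0}^{\,k/k_0},\ldots,s_{H,d-1}^{\,k/k_0}$.

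I claim that $s_0,\ldots,s_{d-1}$ together with $s_d:=s^{k}\in H^0(X,\mathscr{L}^{\otimes k})$ satisfy the conclusion (note all sections live in the same line bundle $\mathscr{L}^{\otimes k}$). Indeed $V(s_d)=V(s)=H$, and for any subset $I\subset\{0,\ldots,d-1\}$ one has
\[
\Bigl(\bigcap_{i\in I}V(s_i)\Bigr)\cap V(s_d)=\bigcap_{i\in I}V(s_{H,i}^{\,k/k_0})=\bigcap_{i\in I}V(s_{H,i})
\]
inside $H$. Applying this to $I=\{0,\ldots,d-1\}$ gives $\bigcap_{i=0}^{d}V(s_i)=\emptyset$ by property (1) on $H$, which is condition (1). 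Applying it to $I=\{1,\ldots,d-1\}$ identifies $\bigcap_{i=1}^{d}V(s_i)$ with $\bigcap_{i=1}^{d-1}V(s_{H,i})$, which is finite over $R$ and non-empty by property (2) on $H$; this is condition (2).

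There is no substantial obstacle: the only points requiring care are (i) renormalising so that $s$ lives in the base line bundle (replacing $\mathscr{L}$ by $\mathscr{L}^{\otimes m}$), and (ii) aligning degrees by taking a common power $k$ of the inductively produced sections, which is harmless because the vanishing loci are unchanged. The non-emptiness in (2) ultimately comes from the non-emptiness in the base case, propagated through the induction by the fact that the smooth hypersurface sections $H$ are non-empty for $d\geq 1$.
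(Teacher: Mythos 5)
Your proof is correct and follows exactly the paper's argument: induction on $d$ via the smooth hypersurface $H=V(s)$ from Lemma~\ref{lemma-gen-hyperplane-section}, taking powers of the inductively produced sections on $H$ so that $k$ can be made large enough for the restriction map $H^0(X,\mathscr{L}^{\otimes k})\to H^0(H,\mathscr{L}_{|H}^{\otimes k})$ to be surjective, lifting, and appending $s^k$. Your elaborations (explicit base case $d=0$, the $H^1(X,\mathscr{L}^{\otimes(k-1)})=0$ criterion for surjectivity, and the verification of conditions (1) and (2) inside $H$) are exactly the details the paper leaves implicit.
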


\begin{proof}[Proof of Proposition \ref{proposition-gen-finite-to-Pd}]
Let $\mathscr{L}$ be a relative ample line bundle. Apply Lemma \ref{lemma-gen-finite} to the local ring of $Y$ at $y$. The sections $s_0,\dots,s_d$
extend to $X\times_{Y}\Spec(U)$ for an open affine neighborhood $U$ of $y$. After possibly shrinking $U$ we have $\bigcap_{i=0}^d V(s_i)=\emptyset$ so that
$$
X\times_{Y}\Spec(U) \xr{} \P^d_{U},
$$
defined by $s_0,\dots,s_d$, is well-defined. The second property of Lemma \ref{lemma-gen-finite} implies that the morphism is generically finite.
\end{proof}

\begin{corollary}\label{corollary-Poincare-duality-made-simple}
  Let $R$ be a smooth $\Z$-algebra. Let $X\xr{} \Spec(R)$ be a smooth projective morphism such that $H^*_{dR}(X/R)$ is a projective $R$-module. 
Suppose that $X$ is connected of relative dimension $d$. 
 If the canonical map 
\begin{equation}\label{equation-poincare-duality-deRham}
H^{i}_{dR}(X/R)\xr{} \Hom_R(H^{2d-i}_{dR}(X/R),R)  
\end{equation}
is an isomorphism, then 
\begin{equation} \label{equation-poincare-duality}
H^{i}_{dRW}(X/\W(R))\xr{\cong} \iHom(H^{2d-i}_{dRW}(X/\W(R)),\mathbf{1}(-d)).  
\end{equation}
\begin{proof}
  In view of Proposition \ref{proposition-trace-map} and (\ref{equation-product-varities-C-R}) we get a morphism in $\mathcal{C}_R$:
$$
H^{i}_{dRW}(X/\W(R))\otimes H^{2d-i}_{dRW}(X/\W(R)) \xr{} H^{2d}_{dRW}(X/\W(R)) \xr{} \mathbf{1}(-d)
$$
inducing (\ref{equation-poincare-duality}). Now, $T(\ref{equation-poincare-duality})=(\ref{equation-poincare-duality-deRham})$ proves the claim.
\end{proof}
\end{corollary}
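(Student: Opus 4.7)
The plan is to construct the Poincar\'e duality morphism in the rigid $\otimes$-category $\mathcal{C}_R$ by cup product with a trace class, and then apply the conservativity of the tangent functor $T$ (Proposition~\ref{proposition-T-conservative}) to reduce the isomorphism assertion to the assumed de Rham Poincar\'e duality \eqref{equation-poincare-duality-deRham}.

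First, I would combine the cup product morphism \eqref{equation-product-varities-C-R} specialized to $Y = X$ and postcomposed with the multiplication on the de Rham--Witt dga (so that one lands in $H^{2d}_{dRW}(X/\W(R))$ rather than on the product) with the trace morphism
$$H^{2d}_{dRW}(X/\W(R)) \xr{} \mathbf{1}(-d)$$
produced by Proposition~\ref{proposition-trace-map}. This yields a natural pairing
$$H^i_{dRW}(X/\W(R)) \otimes H^{2d-i}_{dRW}(X/\W(R)) \xr{} \mathbf{1}(-d)$$
in $\mathcal{C}_R$. By the internal Hom adjunction in the rigid tensor category $\mathcal{C}_R$, this pairing corresponds to the candidate morphism
$$\alpha \colon H^i_{dRW}(X/\W(R)) \xr{} \iHom\bigl(H^{2d-i}_{dRW}(X/\W(R)),\mathbf{1}(-d)\bigr)$$
appearing in \eqref{equation-poincare-duality}.

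To show that $\alpha$ is an isomorphism, I would apply the tangent functor $T$. Because $\iHom$ is computed pointwise via $\Hom_{\W_S(R)}(M_S,N_S)$ (see \eqref{equation-definition-iHom}) and each $M_{\{1\}}$ is finitely generated and projective over $R$, the functor $T$ commutes with $\iHom$; moreover $T(\mathbf{1}(-d)) = R$ by definition of Tate twists. The morphism $T(\alpha)$ is thus the $R$-linear map adjoint to the pairing on de Rham cohomology obtained by applying $T$ to the above construction. By Proposition~\ref{proposition-trace-map} the value at $S=\{1\}$ of the trace is the classical trace map ${\rm Tr}$, and the value at $S=\{1\}$ of \eqref{equation-product-varities-C-R} is the usual cup product on de Rham cohomology. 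Hence $T(\alpha)$ coincides with the de Rham pairing \eqref{equation-poincare-duality-deRham}, which is an isomorphism by hypothesis. Conservativity of $T$ (Proposition~\ref{proposition-T-conservative}) then forces $\alpha$ itself to be an isomorphism in $\mathcal{C}_R$.

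The only delicate point I anticipate is the bookkeeping around $T$, specifically verifying that $T(\iHom(M,N)) = \Hom_R(T(M), T(N))$ (which rests on the pointwise definition of $\iHom$, the projectivity of the $M_S$ needed to commute $\Hom$ with the base change $\otimes_{\W_S(R)} R$, and the Lemma~\ref{lemma-commute-with-beta} ensuring that morphisms automatically intertwine the $\beta_n$) and checking the trace-and-cup-product compatibilities at $S=\{1\}$. Once these naturality statements are recorded, the whole argument reduces to a short diagram chase and an invocation of conservativity.
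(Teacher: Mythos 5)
Your proposal is correct and follows essentially the same route as the paper: construct the pairing in $\mathcal{C}_R$ from the cup product and the trace morphism of Proposition~\ref{proposition-trace-map}, pass to the adjoint morphism into the internal Hom, and then apply $T$ and conservativity (Proposition~\ref{proposition-T-conservative}) to reduce to the de Rham hypothesis. The extra details you supply --- that $T$ commutes with $\iHom$ thanks to the pointwise definition \eqref{equation-definition-iHom} and projectivity of $M_S$, that $T(\mathbf{1}(-d))=R$, and that $T$ of the trace is the classical trace --- are precisely the tacit verifications behind the paper's one-line claim $T\eqref{equation-poincare-duality}=\eqref{equation-poincare-duality-deRham}$.
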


\begin{remark}
Note that the map 
\begin{equation*} %\label{equation-poincare-duality-deRham-remark}
H^{i}_{dR}(X/R)\xr{} \Hom_R(H^{2d-i}_{dR}(X/R),R)  
\end{equation*}
induced by the pairing 
$$
H^i_{dR}(X/R)\otimes_R H^{2d-i}_{dR}(X/R) \xr{} H^{2d}_{dR}(X/R)\xr{} R
$$
is an isomorphism if for every closed point $y\in \Spec(R)$
the  Hodge-to-de-Rham spectral sequence for the fibre at $y$, 
\begin{equation} \label{equation-Hodge-to-de-Rham-remark}
H^{j}(X_y,\Omega^i_{X_y/k(y)})\Rightarrow H^{i+j}_{dR}(X_y/k(y)),  
\end{equation}
degenerates.
Indeed, since the de Rham cohomology is locally free, it is
also stable under base change and it suffices to show that, for every closed 
point $y\in \Spec(R)$, the Poincar\'e pairing for the fibre at $y$,
$$
H^i_{dR}(X_y/k(y))\otimes_{k(y)} H^{2d-i}_{dR}(X_y/k(y)) \xr{} H^{2d}_{dR}(X_y/k(y))\xr{} k(y),
$$
is non-degenerate. This follows easily from the degeneration of the Hodge-to-de-Rham spectral sequence and Serre duality.  

The degeneration of the spectral sequence (\ref{equation-Hodge-to-de-Rham-remark}) is known in the following cases:
\begin{itemize}
\item $H^j(X,\Omega^i_{X/R})$ is  torsion-free for all $i,j$,
\item $\dim X_{y}\leq {\rm char}(k(y))$.
\end{itemize}

As an example, we have abelian schemes or curves over $R$. 
\end{remark}

%\bibliography{drw}

\end{document}